\title[Bayes posterior convergence]
{Bayes posterior convergence for loss functions via almost additive Thermodynamic Formalism}
\author[A. O. Lopes]{Artur O. Lopes}
\address{Universidade Federal do Rio Grande do Sul, 91509-900 MAT, Porto Alegre, Brasil}
\email{arturoscar.lopes@gmail.com}
\author[S. R. C. Lopes]{Silvia R. C. Lopes}
\address{Universidade Federal do Rio Grande do Sul, 91509-900 MAT, Porto Alegre, Brasil}
\email{silviarc.lopes@gmail.com}
\author[P. Varandas]{Paulo Varandas $^\dagger$}
\address{Instituto de Matem\'atica e Estat\'istica, Universidade Federal da Bahia, 40170-110, Brazil}
\email{paulo.varandas@ufba.br}
\subjclass[2010]{
62F15, 
37D35,   
60F10,  
62C12, 
62E10} 
\keywords{Bayesian inference, thermodynamic formalism, Gibbs posterior convergence, large deviations}
\date{August, 2021 \\ \qquad $^\dagger$ Email: paulo.varandas@ufba.br (corresponding author)}
\begin{document}

\theoremstyle{plain}
\newtheorem{maintheorem}{Theorem}
\renewcommand{\themaintheorem}{\Alph{maintheorem}}
\newtheorem{maincorollary}{Corollary}
\newtheorem{theorem}{Theorem }[section]
\newtheorem{proposition}[theorem]{Proposition}
\newtheorem{corollary}[theorem]{Corollary}
\newtheorem{claim}{Claim}
\theoremstyle{definition} \theoremstyle{remark}
\newtheorem{remark}[theorem]{Remark}
\newtheorem{example}[theorem]{Example}
\newtheorem{definition}[theorem]{Definition}
\newtheorem{problem}{Problem}
\newtheorem{question}{Question}
\newtheorem{exercise}{Exercise}

\theoremstyle{plain}
\newtheorem{thm}{Theorem}[section]
\newtheorem{cor}[thm]{Corollary}
\newtheorem{property}{Property}[section]
\newtheorem{prop}[thm]{Proposition}
\newtheorem{lemma}[thm]{Lemma}
\theoremstyle{definition}
\newtheorem{exa}[thm]{Example}
\newtheorem{df}[thm]{Definition}
\newtheorem{rmk}[thm]{Remark}

\maketitle

\begin{abstract}
Statistical inference can be seen as information processing involving input
information and output information that updates belief about some unknown parameters.
We consider the Bayesian framework for making inferences about dynamical systems from ergodic observations,
where the Bayesian procedure is based on the Gibbs posterior inference, a decision process generalization of standard Bayesian inference (see~\cite{BHW,JT}) where the likelihood is replaced by the exponential of a loss function.
In the case of direct observation and almost-additive loss functions, we prove an exponential convergence of the a posteriori measures to a limit measure.
Our estimates on the Bayes posterior convergence for direct observation are related 
and extend
those in \cite{Nobel1} to a context where loss functions are almost-additive. 
Our approach makes use of non-additive thermodynamic formalism and large deviation properties
 \cite{L4,L2,VZ}.
\end{abstract}



\section{Introduction and statement of the main results}

\subsection{Bayesian inference}

Statistical inference aims to update beliefs about uncertain parameters as more information becomes available.
The Bayesian inference, one of the most successful methods used in decision theory, builds over Bayes' theorem:
\begin{equation}\label{eq:Bayes}
\text{Prob}(H\mid E) = \frac{\text{Prob} (E\mid H) \cdot \text{Prob}(H) }{\text{Prob}(E)}
	= \frac{\text{Prob} (E\mid H)}{\text{Prob}(E)}  \cdot \text{Prob}(H)
\end{equation}
which expresses the conditional probability of the hypothesis $H$ conditional to the event $E$ with the probability
that the event or evidence $E$ occurs given the hypothesis $H$.
In the previous expression, the \emph{posterior probability} $\text{Prob}(H\mid E)$ is inferred as an outcome of the
\emph{prior probability}  $\text{Prob}(H)$ on the hypothesis, the model evidence $\text{Prob}(E)$ and the likelihood
$\text{Prob} (E\mid H)$. Bayes' theorem has been widely used as an inductive learning model to transform prior and sample information into posterior information and, consequently, in decision theory.
One should not make confusion between $\text{Prob} (E\mid H)$ and $\text{Prob} (H\mid E)$.
Let us provide a simple example. Suppose one is tested for covid-19, and the test turns out to be
positive. If the test is $99 \%$ accurate, the latter means that  $\text{Prob}(\text{Positive test}\mid \text{Covid-}19)=0.99$. 
However, the most relevant information is $\text{Prob}(\text{Covid-}19 \mid \text{Positive test})$, namely the probability
of having covid-19 once one is tested positive, which is related with the former conditional probability by ~\eqref{eq:Bayes}.
If proportion $\text{Prob}(\text{Covid-}19)$ of infected persons in the total population is $0.001$ it it possible to compute the normalizing term
$\text{Prob}(\text{Positive test})$ and to conclude that $\text{Prob}(\text{Covid-}19\mid \text{Positive test})=0.5$, which provides a different and rather relevant  information (see e.g. \cite{Cat} for all computations in a similar example). 
The conclusion is that both the prior and the data contain important information, and so
neither should be neglected.

The process of drawing conclusions from available information is called inference. However, in many physical phenomena the available information is often insufficient to reach certainty through a logical reasoning.  In these cases, one may use different approaches for doing inductive inference, and the most common methods are those involving probability theory and entropic inference (cf.~\cite{Cat}).
The frequentist interpretation advocates that the probability of a random event is given by the relative number of occurrences of the event in a sufficiently large number of identical and independent trials. 
An alternative approach is given by the Bayesian interpretation 
which became more popular in the recent decades and sustains that a probability reflects the degree of belief of an agent in the truth of a proposition. Citing \cite{Cat}, {``the crucial aspect of Bayesian probability measures is that different agents may have different degrees of belief in the truth of the very same proposition, 
a fact that is described by referring to Bayesian probability measures as being subjective".}

\medskip
In the framework of parametric Bayesian statistics, one is interested in updating beliefs, or the degree of confidence, on the space of parameters $\Theta$, which play the role of the variable $H$ in the expression ~\eqref{eq:Bayes} above. In rough terms, the formula ~\eqref{eq:Bayes} expresses that the belief on a  certain set of parameters is updated from the original belief, after an event $E$, by how likely such event is for all parameterized models. 
This supports the idea that while frequentists say the data are random and the parameters are fixed, Bayesians say the data are fixed and the parameters are random. 
The basic idea in classical Bayesian inference is the updating of a prior belief distribution to a posterior belief distribution when the parameter of interest is connected to observations via the likelihood function.
In \cite{BHW}, Bissiri et al propose a general framework for the Bayesian inference arguing that a valid update of a prior belief distribution to the posterior one can be made for parameters which are connected to observations through a loss function which accumulates information as time passes rather than the likelihood function. In their framework, the classical inference process corresponds to the special case
where the loss function is expressed as the negative log likelihood function.
In this more general framework, the choice of loss function determines the way that the data are analyzed contribute to the mechanism of updating the belief
distribution on the space of parameters, and such choice is often subjective and depends on the kind of feature one desires to highlight from the data.
Moreover, the purpose is that the successive updated belief distributions, called posterior distributions, either converge or concentrate around the unknown 
targeted parameters. We refer the reader to \cite{Abra,Sche1,Denk,Roh,Sche, Suhov} for more information on classical Bayesian inference formalism.

\medskip
The Bayesian inference in the context of observations arising from dynamical systems faces some natural challenges. 
The first one is that the process of taking time series (via Birkhoff theorem) lacks independence: if $T: (Y,\nu)\to (Y,\nu)$ is a measure preserving map 
and $\phi: Y\to\mathbb R$ is an observable then the sequence of random variables $(\phi\circ T^n)_{n\geqslant 1}$ is identically distributed 
but the random variables are not even pairwise independent. The second one concerns the choice of the loss function to make update of beliefs on the
space of parameters. From the Physics and the Dynamical Systems viewpoints it is natural that loss functions should value some of the geometric
or chaotic properties of the dynamical system, identified either in terms of Lyapunov exponents, joint spectral radius of matrix cocycles, entropy
 or estimates on the Charath\'eodory, box-counting or Hausdorff dimension of repellers and attractors, and with applications in wavelets and multifractal analysis, just to mention a few.  These concepts, central in mathematical physics (see e.g.  \cite{Barreira,Barreira2,Benoist,BKL,Bochi,Daube,Feng1,Falc,Galla,Galla1} and references therein) appear naturally as limits of either Birkhoff averages of potentials, sub-additive or almost-additive potentials (or several other versions of non-additivity, to be defined in Subsection~\ref{subsec:almostadditive}). As a first example, if $T$ is a $C^1$-smooth volume preserving and ergodic diffeomorphism on a surface then its largest Lyapunov exponent is 
 by the random product of $SL(2,\mathbb R)$-matrices as
 $$
 \lambda_+(T,\text{Leb})=\lim_{n\to\infty} \frac1n \log \|A(T^{n-1}(y))\dots A(T(y)) A(y)\|
 $$
for Lebesgue almost every $y\in Y$, where $A=DT: Y \to TY$ is the derivative cocycle. In general, the sequence of observables $\Phi=(\varphi_n)_{n\geqslant 1}$ defined by $\varphi_n(y)=\log \|A(T^{n-1}(y))\dots A(T(y)) A(y)\|$ is sub-additive and, in the special case that the linear cocycle has an invariant cone-field, 
this sequence is actually almost-additive (cf. \cite{Feng}). 
 A second example concerns the Shannon-McMillan-Breiman formula for entropy on one-sided subshifts of finite type 
 $\sigma:\Omega\to\Omega$, where the set $\Omega\subset \{1,2,...,q\}^\mathbb{N}$ is $\sigma$-invariant 
 determined by a transition matrix
$M_\Omega \in \mathcal M_{q\times q}(\{0,1\})$
and
\begin{equation}\label{eq:SMB}
h_{\mu}(\sigma)=\lim_{n\to\infty} -\frac1n \log \mu(C_n(x)), \quad\text{ for } \mu\text{-a.e. $x$}
 \end{equation}
where $C_n(x)\subset \Omega$ denotes the $n$-cylinder set 
containing the sequence $x=(x_1, x_2, x_3, \dots)$.
The sequence of observables $\Phi=(\varphi_n)_{n\geqslant 1}$ defined by $\varphi_n(y)=-\log \mu(C_n(x))$, which is non-additive in general, is 
additive and almost-additive in the relevant classes of Bernoulli and Gibbs measures, respectively (see Lemma~\ref{le:ln-mu}). 
Finally, it is worth to mention that sub-additive and almost-additive sequences appear naturally also in applications to several other areas of know\-ledge and appear for instance in the study of factorial languages by Thue, Morse and Hedlund in the beginning of the twentieth century (see \cite{Shur} and references therein).

\medskip

In this article, inspired by the relevant physical quantities arising from non-additive sequences of potentials, we will establish a bridge between non-additive thermodynamic formalism of dynamical systems and  Gibbs posterior inference  
in statistics (to be defined in Subsection~\ref{subsec:posterior} below), two areas of research in connection with statistical physics. 
We refer the interested reader to the introduction of \cite{Nobel1} for a careful and wonderful exposition on the link between
Bayesian inference and thermodynamic formalism, and a list of cornerstone contributions.
We will mostly be interested in the parametric formulation of Bayesian inference, as described below.
Let $\sigma: \Omega\to\Omega$ be a subshift of finite type. 
This will serve as the underlying dynamical system, with respect to which 
samplings are obtained along its finite orbits $\{y, \sigma(y), \dots, \sigma^{n-1}(y)\}$, $y\in \Omega$. 
%
We take a family of Gibbs probability measures $\{\mu_\theta\}_{\theta\in \Theta}$ as the models in the inference procedure 
for their relevance and ubiquity in the thermodynamic formalism of dynamical systems, 
and are of crucial importance in several other fields as in the study of the randomness of time-series, decision theory, quantum information and information gain, just to mention a few 
(cf. \cite{Altaner,Cat,FLL,Ji,LR}). 
In our context, Gibbs measures appear as fixed points of the dual of certain transfer operators. Let us be more precise.
For any  Lipschitz continuous potential $A: \Omega\to  \mathbb{R}$, the Ruelle-Perron-Frobenius transfer operator associated to $A$ is defined by
$$ \mathcal{L}_A (\varphi) (x) = \sum_{\sigma(y) =x} e^{A(y)} \varphi(y).$$
The potential $A$ is called normalized if $ \mathcal{L}_A(1)=1$, and in this case, it is natural to write $A =\log J$, and we call $J$ the Lipschitz  continuous Jacobian. 
A Gibbs measure $\mu$ is any $\sigma$-invariant probability measure obtained as a fixed point of the dual operator $ \mathcal{L}_{\log J}^*$ acting on the space of probability measures on $\Omega$, for some Lipschitz continuous and normalized Jacobian $J$. In this way, it is natural to parametrize Gibbs probabilities by the space of normalized Lipschitz continuous Jacobians $J$, hence this space can be observed as an infinite dimensional Riemannian analytic manifold \cite{GKLM,LR0,LR}.
Invariant Gibbs measures are equilibrium states, namely they satisfy a variational relations (cf.  Subsection~\ref{sec:main} for more details).
Given a prior probability measure $\Pi_0$ on the space $\Theta$ of parameters and a the sampling according to a Gibbs measure $\mu_{\theta_0}$, the posterior probability (i.e. updated belief distribution) is determined using the loss functions  
$
\ell_n : \Theta    \times \Omega \times    \Omega \to \mathbb{R},
$
where $\ell_n(\theta,x,y)$ encodes the information on the parameter $\theta$ accumulated along the sampling $\{y, \sigma(y), \dots, \sigma^{n-1}(y)\}$
and influenced by the measurements along the orbit $\{x, \sigma(x), \dots, \sigma^{n-1}(x)\}$. The Shannon-McMillan-Breiman formula ~\eqref{eq:SMB} 
suggests the use of loss functions to collect the information of the measure on cylinder sets in $\Omega$  (cf. expressions \eqref{tri0}, ~\eqref{tri} and ~\eqref{Chalk} below).
The relative entropy, also called Kullback-Leibler divergence and defined by \eqref{chazo},
makes the comparison between the measurements of cylinders according to two different Gibbs measures. 
This notion is of paramount importance in Physics 
and will be used to interconnect log likelihood inference 
with the direct observation analysis of Gibbs probability measures. 
Our main results guarantee that posterior consistency for certain classes of loss functions determined by almost-additive sequences of potentials: the
posterior distributions asymptotically concentrate around the unknown targeted parameter $\theta_0$, often with 
exponential speed (we refer the reader to Theorems~\ref{thm:main}, ~\ref{thm:main2} and ~\ref{thm:main3} for the precise statements).
The main ingredient to obtain quantitative estimates on the convergence for the parameter $\theta_0$ is the use of large deviations for non-additive sequences
of potentials \cite{VZ}. 

\medskip
Our results are strongly inspired, and should be compared, with those by McGoff, Mukherjee and Nobel  \cite{Nobel1}, where the authors 
established posterior consistency of (hidden) Gibbs processes on mixing subshifts of finite type using properties of Gibbs measures. 
For that purpose, they consider a more general framework, where the dynamical system $T: Y \to Y$ on a Polish space does not necessarily
coincide with the subshift of finite type $\sigma: \Omega\to\Omega$. In particular, the sampling is determined by a $T$-invariant and ergodic probability 
measure $\nu$, that could be unrelated to the Gibbs measures $\{\mu_\theta\}_{\theta\in \Theta}$ for the shift.
If the loss functions are additive (i.e. $\ell_n=\sum_{j=0}^{n-1} \ell(\theta, \sigma^j(x), T^j(y))$ for some function $\ell: \Theta\times \Omega\times Y \to \mathbb R$
satisfying a mild regularity condition then the main results in \cite{Nobel1} ensure that it is possible to 
formulate  the problem as a limiting variational problem and to identity the parameters, obtained as minimizing parameters for a lower semicontinuous 
function $V:\Theta \to \mathbb{R}$, 
for which the posterior consistency holds: if $\Theta_{\min} =\text{argmin}_{\theta \in \Theta} V(\theta)$ 
then the posterior distributions $\Pi_n(\cdot\mid y)$, defined by ~\eqref{triy}, satisfy
$$\lim_{n \to \infty} \Pi_n(\Theta \setminus U\,|y)
=0$$ for each open neighborhood $U$ of $\Theta_{\min}$ and for $\nu$-almost every $y \in {Y}$
(cf. \cite[Theorem~2]{Nobel1}).
The proof of this result requires the use of joinings (or couplings) of the model system and the observed system, and results on fibered entropy.
Our framework corresponds to the special case of direct observation, that the dynamical system $T$ coincides with the subshift of finite type $\sigma$
and the target parameter is a single $\theta_0\in \Theta$, 
with a subtler difference that our assumptions ensure that $\mu_\theta\neq\mu_{\tilde\theta}$ for every distinct $\theta,\tilde\theta\in \Theta$.
Our results complement the ones in \cite{Nobel1} in the sense that the information can 
be collected by more general loss functions $\ell_n$.
Furthermore, the more direct use of large deviation techniques allows to prove an exponential 
speed of convergence in the posterior consistency (cf. Theorem~\ref{thm:main}),
which were not known even in the context of direct observation 
 (cf. \cite[Theorem 2 and Remark 8]{Nobel1}). Summarizing, the three main 
novelties are the extension to non-additive loss functions, the exponential rate of convergence and the proof which is not based on joinings and fiber entropy.
It is also worth noticing that, more recently, Su and Mukherjee \cite{SM} also used a large deviations approach for posterior consistency, using Varadhan's large deviation principle 
for stochastic processes. A different point of view of the Bayesian {\it a priori\,} and {\it a posteriori} formalism will appear in \cite{ELLM}  where results on  thermodynamic formalism for plans are used (see \cite{LM2,LMMS1}). In \cite{Ji} the author considered log-likelihood estimators  in classical thermodynamic formalism and the inference concerns H\"older potentials and not probabilities.

\medskip

To finalize, one should mention that there is an increasing interest to explore the strong connection between Statistical Inference and Physics in general. 
There are several such connections in this regard, including a Bayesian approach to the dynamics of the classical ideal gas  \cite[Section 31.3]{von},
prior sensitivity in the Bayesian model selection context to some galaxy data sets \cite{CaPe}.
In the monograph \cite{Cat}, the author clarifies the conceptual foundations of Physics by deriving the fundamental laws of statistical 
mechanics and of quantum mechanics as examples of inductive inference, while he also advocates that, in view of the fact that models may need to change as time evolves, it may be the case that all areas of Physics may be modeled using inductive inference.

\subsection{Gibbs posterior inference}\label{subsec:posterior}

%
%

According to the Gibbs posterior paradigm \cite{BHW,JT}, the beliefs should be updated according to the Gibbs posterior distribution. Let us recall the formulation of this posterior measure following \cite{Nobel1}.

\subsubsection*{Observed system}
Assume that $Y$ is a complete and separable metric space and that $T:Y \to Y$ is a Borel measurable map endowed with a
$T$-invariant, ergodic probability measure $\nu$. This dynamical system represents the observed system and will be used to update
information for the model. This is the analogue of the data in the context of Statistics. The updated belief, given by the \emph{a posteriori} measure, is obtained by feeding data obtained from the observed system on a model by means of a loss function.

\subsubsection*{Model families}
Consider a transitive subshift of finite type $\sigma \colon \Omega \to \Omega$ where $\sigma$ denotes the right-shift map,
acting on a compact invariant set $\Omega\subset \{1,2,...,q\}^\mathbb{N}$ determined by a transition matrix
$M_\Omega \in \mathcal M_{q\times q}(\{0,1\})$. The map $\sigma$ presents different statistical behaviors (e.g. measured in terms of different convergences 
for Ces\`aro averages of continuous observables) according to any of
its equilibrium states associated to Lipschitz continuous observables, each of which satisfies a Gibbs
property (see e.g. Remark 1 in \cite[Section 2]{PP} or \cite{L3}).

Consider a compact metric space $\Theta$ and a family of $\sigma$-invariant probability measures
$$
\mathscr G= \big\{\mu_\theta \colon \theta \in \Theta\big\}
$$
so that:
(i) for every $\theta\in \Theta$ the probability measure  $\mu_\theta$ is a Gibbs measure associated to a Lipschitz continuous potential
$f_\theta : \Omega \to \mathbb R$, that is, there exists $K_\theta>1$ and $P_\theta\in \mathbb R$ so that
\begin{equation}\label{eq:Gibbs}
\frac1{K_\theta} \leqslant \frac{\mu_\theta(C_n(x))}{e^{-n P_\theta} + S_nf_\theta(x)} \leqslant K_\theta, \qquad \forall n\geqslant 1,
\end{equation}
where $S_n f_\theta=\sum_{j=0}^{n-1} f_\theta\circ \sigma^j$ and $C_n(x)\subset \Omega$ denotes the $n$-cylinder set in the shift space $\Omega$ containing the sequence $x=(x_1, x_2, x_3, \dots)$; and (ii) the family $\Theta\ni \theta \mapsto f_\theta$ is continuous
(in the Lipschitz norm). We assume Gibbs measures to be normalized, hence probability measures.
It is well known that the previous conditions ensure the continuity of the pressure function $\Theta\ni \theta \mapsto P_\theta$ and of the map $\Theta\ni \theta\mapsto \mu_\theta$ (in the weak$^*$ topology) \cite{PP}.
In particular, one can take a uniform constant $K>0$ in ~\eqref{eq:Gibbs}.
The problem to be considered here involves a formulation and analysis of an iterative procedure
(based on sampling and updated information) on the family $\mathscr G$ of models.

\subsubsection*{Loss functions and Gibbs posterior distributions}
Consider the product space $\Theta \times \Omega$ endowed with the metric $d$ defined as
$ d(\,(\theta, x),(\theta ' , x ')\,)= \max \{ d_\Theta(\theta , \theta ') ,
d_\Omega   (x, x')\,  \}.$
A fully supported probability measure  $\Pi_0$ on $\Theta$ describes the {\it a priori} uncertainty on the Gibbs measure.

\smallskip
Given such an {\it a priori} probability measure  $\Pi_0$ on the space of parameters $\Theta$ and a sample of size $n$ (determined by the observed system $T$) we will get the {\it a posteriori} probability measure  $\Pi_n$ on the space of parameters
$\Theta$, taking into account the updated information from the data.
More precisely, given $\Pi_0$ and a family $(\mu_\theta)_{\theta \in \Theta}$, consider the probability measure 
$P_0$ on the product space $\Theta \times \Omega$ given by
$$
P_0 (E)= \int \int \mathbf 1_E (\theta,x) \,d\mu_\theta (x) \,d\Pi_0 (\theta)
$$
for all Borel sets $E \subset \Theta \times \Omega$. In other words, $P_0$ has the \emph{a priori} measure $\Pi_0$ as marginal on $\Theta$ and admits a disintegration on the partition by vertical fibers where the fibered measures  are exactly
the Gibbs measures $(\mu_\theta)_{\theta\in \Theta}$. There is no action of the dynamics $T$ on this product space.
Indeed, the \emph{a posteriori} measures are defined 
using loss functions.
For each $n \in \mathbb{N}$  consider a continuous  \emph{loss function} $\ell_n$ of the form
$$
\ell_n : \Theta    \times \Omega \times    Y \to \mathbb{R},
$$
%
%
consider the probability measure  $P_n$ on $\Theta\times \Omega$ given by
\begin{equation} \label{tri0}
P_n (E\mid y)= \int \int \mathbf 1_E (\theta,x) e^{ - \, \ell_n (\theta, x,y)}\,d\mu_\theta (x) \,d\Pi_0 (\theta)
\end{equation}
for all Borel sets $E \subset \Theta \times \Omega$,
and set
\begin{equation} \label{tri} Z_n(y)=\int_\Theta \int_\Omega\,  e^{ - \, \ell_n (\theta, x,y)}\,d \mu_\theta (x)\, d \Pi_0 (\theta), \end{equation}
where $x=(x_1,x_2,...,x_n,\dots)\in \Omega$  and $y\in Y$. In the special case that $Y=\Omega$, that
$-\ell_n: \Theta \times \Omega \times \Omega \to\mathbb R$ coincides with a $n$-Birkhoff sum of a fixed observable $\psi$ with respect to $T$ and $\Pi_0$ is a Dirac measure,
the expression ~\eqref{tri} resembles the partition function
in statistical mechanics whose exponential asymptotic growth coincides with the topological pressure of $T$ with respect to  $\psi$.

\smallskip
Given $y \in Y$ and $n\geqslant 1$, the {\it a posteriori} Borel probability measure
$\Pi_n (\cdot \,|\, y)$ on the parameter space $\Theta$ (at time $n$ and determined by the sample of $y$)
is defined by
\begin{equation} \label{triy}
\,\Pi_n (B \,|\, y) =\frac1{Z_n(y)}  \int_B \int_\Omega  {e^{ - \, \ell_n (\theta, x,y)}d \mu_\theta (x)}d \Pi_0 (\theta)\,,
\end{equation}
for every measurable $B\subset \Theta$ and appears as marginals of the probability measures $P_n(\cdot\mid y)$ given above.

\smallskip
The general question is to describe the set of probability measures $\,\Pi_n (.\,|\, y)$ on the parameter space
$\Theta$, namely if their marginals converge and to formulate the locus of convergence in terms of some variational principle or as points of maximization for a certain function (see e.g. \cite[Theorem~2]{Nobel1} for a context where the
loss functions are chosen such that the support of such measures on the minimization locus of a certain rate function).

\smallskip
The main problem we are interested in is to understand
whenever a sampling process according to a fixed probability
measure can help to identify it from a recursive process involving
Bayesian inference. 
Assume that $Y=\Omega$, that $T=\sigma$ is the shift and that
one is interested in a specific probability measure  $\mu_{\theta_0}\in \mathscr G$,
where $\theta_0 \in \Theta$. If $\nu=\mu_{\theta_0}$ then the sampling
$\{y, T(y), T^2(y), \dots T^{n-1}(y)\}$ is distributed according to this probability measure.
From the Birkhoff time series is it possible to successively update the initial a priori probability measure  $\Pi_0$ in order to get a sequence of probability measures $\Pi_n(\cdot\mid y)$ on $\Theta$ (the a posteriori probability measure  at time $n$) as described.
We ask the following:
\smallskip
\begin{itemize}
\item[$\circ$] Does the limit $\lim_{n \to \infty} \Pi_n$ exist?  \smallskip
\item[$\circ$] If the previous question has an affirmative answer: \smallskip
	\begin{itemize}
	\item[$\circ$] is it the Dirac measure $\delta_{\theta_0}$ on $\theta_0\in \Theta$?
	\item[$\circ$] is it possible to estimate  the speed of convergence to the limiting measure?
	\end{itemize}
\end{itemize}

In this paper we answer the previous questions for loss functions that are not necessarily arising from
Birkhoff averaging but that keep some almost additive property. For that reason our approach will make use
of results from non-additive thermodynamic formalism, hence it differs from the one considered in \cite{Nobel1}.
We refer the reader to \cite{Cha1} for a related work which does not involve Bayesian statistics.

This paper is organized as follows. In the rest of this first section we formulate the precise setting we are interested in and state the main results. In Section~\ref{sec:examples} we present several examples and applications of our results. Section~\ref{se:prelim} is devoted to some preliminaries on relative entropy, large deviations and non-additive thermodynamic formalism. Finally, the proofs of the main results are given in Section~\ref{sec:proofs}.

\subsection{Setting and Main results}\label{sec:main}

Let $\sigma :\Omega \to \Omega$ be a subshift of finite type endowed with the metric $d_\Omega(x,y)=2^{-n(x,y)}$, where
$n(x,y)=\inf\{n\geqslant 1\colon x_n\neq y_n\}$, and denote by $\mathcal M_\sigma(\Omega)$ the space of
$\sigma$-invariant probability measures.
The space $\mathcal M_\sigma(\Omega)$ is metrizable and we consider the usual topology on it (compatible with weak$^*$ convergence). Let $D_\Omega$ be a metric on $\mathcal M_\sigma(\Omega)$ compatible with the weak$^*$ topology.
The set $\mathcal{G}\subset \mathcal M_\sigma(\Omega)$ of Gibbs measures for
Lipschitz continuous potentials is dense in $\mathcal M_\sigma(\Omega)$ (see for instance \cite{L2}). Given a Lipschitz continuous potential $A:\Omega \to \mathbb{R}$ we denote by $\mu_A$ the associated Gibbs measure.  We say that the Lipschitz continuous potential $A:\Omega \to \mathbb{R}$ is \emph{normalized} if $\mathcal{L}_A (1)=1$, where
$$
\mathcal{L}_A \colon \text{Lip}(\Omega,\mathbb{R}) \to \text{Lip}(\Omega,\mathbb{R})
	\quad\text{given by}\quad
	\mathcal{L}_A g(x)=\sum_{\sigma(y)=x} \, e^{A(y)} \, g(y)
$$
is the usual Ruelle-Perron-Frobenius transfer operator (cf. \cite[Chapter~2]{PP}).
We will always assume that potentials are normalized and write $J=e^A>0$ (or alternatively $A=\log J$) as the Jacobian of the associated probability measure 
$\mu_A=\mu_{\log J}.$ That is, $\mathcal{L}_{\log J}^* (\mu_{\log J})= \mu_{\log J}$ and, equivalently,
$\mu_{\log J}(\sigma(E)) = \int_E J \, d\mu_{\log J}$ for every measurable set $E\subset \Omega$ so that $\sigma\mid_E$ is injective.  We consider the Lipschitz norm $|\,.\,|=\|\cdot\|_\infty+|\cdot|_{Lip}$ on the space of Lipschitz continuous potentials $A$, where
$
|A|_{Lip} = \sup_{x\neq y} \frac{|A(x)-A(y)|}{d_\Omega(x,y)}.
$
 Moreover, it is a classical result in thermodynamic formalism (see e.g. \cite{PP}) that
the following variational principle holds
\begin{equation} \label{mw}
\sup_{\mu \in \mathcal M_\sigma(\Omega)} \Big\{\, h(\mu) \,+\, \int \log J \, d \mu \,\Big\}= 0
\end{equation}
for any Lipschitz and normalized potential $\log J$.
A particularly relevant context is given by the space of stationary Markov probability measures on shift spaces
(cf. Example~\ref{exlo}). One should emphasize that, replacing the metric on $\Omega$, it is possible to deal instead 
with the space of Lipschitz continuous potentials (cf. \cite[Chapter~1]{PP}).  

\medskip
In the direct observation context, the sampling on the Bayesian inference is determined by
$T=\sigma$ and a fixed $T$-invariant Gibbs measure $\nu $ on $\Omega$ associated
to a normalized potential $\log J$. The sampling will describe the interaction (expressed in terms of the loss functions)
over certain families of potentials (and Gibbs measures) which are parameterized  on a compact set, where the sampling will occur.
More precisely, consider the set of parameters $\Theta\subset \mathbb{R}^k$ of the form
$$
\Theta = [a_1,b_1] \times [a_2,b_2] \times ...\times [a_k,b_k],
$$
endowed with the metric $d_\Theta$ given by $d_\Theta(\theta_1,\theta_2) = \,\|\theta_1 - \theta_2\|$, $\forall \theta_1,\theta_2\in \Theta$, and denote by $f:\Theta \to \mathcal{G}\subset \mathcal M_\sigma(\Omega)$ a continuous function of potentials parameterized over
$\Theta$ such that:
\begin{enumerate}
\item $f$ is an homeomorphism over its image;
\item for each $\theta$ the potential $f(\theta)$ is normalized (we use the notation $f(\theta) = \log J_\theta$).
\end{enumerate}
The assumptions guarantee that for each $\theta \in \Theta$ there exists a unique invariant Gibbs measure
$\mu_\theta$ with respect to the associated normalized potential $f(\theta)$, and that these vary continuously
in the weak$^*$ topology. Moreover, as the parameter space $\Theta$ is compact and $f:\Theta \to \mathcal{G}$ is a continuous function (expressed
in the form $f(\theta)= \log J_\theta$, where $f$ is a continuous function on $\theta \in \Theta$ and $J_\theta>0$), we deduce that
the quotient
\begin{equation} \label{372} \frac{ J_{\theta_1} (x) }{J_{\theta_2} (x)   }>0\end{equation}
is uniformly bounded for every $x \in \Omega$ and all $\theta_1,\theta_2 \in \Theta.$

\begin{remark}
At this moment we are not requiring the probability measure  $\nu$ of the observed system $Y=\Omega$ to belong to the family
of probability measures $(\mu_\theta)_{\theta\in \Theta}$. We refer the reader to Example~\ref{ex2} for an application in the  special case that $\nu=\mu_{\theta_0}$, for some $\theta_0\in \Theta$.
\end{remark}

The statistics is described by an \emph{a priori} Bayes probability measure  $\Pi_0$ on the space of parameters $\Theta$ satisfying \emph{Hypothesis A}:
\begin{align*}\tag{A}
\Pi_0 (d z_1, d z_2,...,d z_k)=\Pi_0 (d \theta) \quad
	& \text{is a fixed continuous strictly positive  density } \\
	& \text{fully supported on the compact  set $\Theta$.}
\end{align*}
In many examples the \emph{a priori} measure appears as the Lebesgue or an equidistributed measure on the parameter space. We refer the reader to Section~\ref{sec:examples} for examples. 

\smallskip

The previous full support assumption not only expresses the uncertainty on the choice of the parameters, as it ensures that
all parameters in $\Theta$ will be taken into account in the inference independently of the initial belief (distribution of $\Pi_0$).
In this case of direct observations of Gibbs measures,
let $\theta_0\in\Theta$ be fixed. The probability measure  $\mu_{\theta_0}$ will play the role of the measure $\nu$ (on the observed system $Y$) considered abstractly on the previous subsection.
We will consider the {\em loss functions}
$\ell_n : \Theta \times \Omega \times \Omega \to \mathbb R$, $n\geqslant 1$, given by
%
%
\begin{equation} \label{Chalk}
\ell_n(\theta, x,y)=
\begin{cases}
\begin{array}{ll}
 \log \Big( \mu_{\theta_0}  \,( C_n(y) )\, \Big)
	&, \text{if } x\in C_n(y) \\
	+\infty &, \text{if } x\not \in C_n(y).
\end{array}
\end{cases}
\end{equation}


If one denotes by $\mathbf 1_{C_n(y)}$ the indicator function of the $n$-cylinder set centered at $y$ and defined by ${C_n(y)}=\{(x_j)_{j\geqslant 1} \colon x_j=y_j, \forall 1\leqslant j \leqslant n\}$, such choice of loss functions ensures that
\begin{align*}
Z_n(y) & =  \int_\Theta \int e^{ - \, \ell_n (\theta, x,y)} \,d \mu_\theta (x)\, d \Pi_0 (\theta)
	 =  \int_\Theta \int_{C_n(y)} e^{ - \, \ell_n (\theta, x,y)} \,d \mu_\theta (x)\, d \Pi_0 (\theta)  \\
	& = \int_\Theta \int     \frac{ \,\,\mathbf 1_{C_n(y)} (x )\,}{\mu_{\theta_0} \,(\,C_n(y)\,) }   \,d \mu_\theta (x)\, d \Pi_0 (\theta)
	 = \int_\Theta \,  \frac{\mu_{\theta}  \,(\,C_n(y) \,)}{\mu_{\theta_0} \,(\,C_n(y)\,) }     \,d \Pi_0 (\theta)
\end{align*}
for each $y\in Y$.
Therefore, using equalities \eqref{Chacha} and \eqref{chazo} 
(see Subsection~\ref{subsecrelent} below),
Jensen inequality and the monotone convergence theorem, 
one obtains that
\begin{align}
\limsup_{n \to \infty} \frac{1}{n} \log Z_n(y) \,\, &  =  \limsup_{n \to \infty} \frac{1}{n} \log \int_\Theta  \frac{\mu_{\theta}
	\,(\,C_n(y) \,)}{\mu_{\theta_0} \,(\,C_n(y)\,) }   d \Pi_0 (\theta) \nonumber \\
	& \geqslant     \limsup_{n \to \infty} \frac{1}{n} \int_\Theta \log  \frac{\mu_{\theta}  (C_n(y) )}{\mu_{\theta_0} (C_n(y)) }
	\, d \Pi_0 (\theta) \nonumber  \\
	& = - \int_\Theta  h(\mu_{\theta_0} \mid \mu_{\theta}  ) \, d \Pi_0 (\theta) \nonumber  \\
	& =  \int_{\Theta} \Big[\,  h( \mu_{\theta_0}) +   \int_\Omega \log J_{\theta} \, d  \mu_{\theta_0} \,\Big] \,d \Pi_0 (\theta)
	\label{conseq:vp}
\end{align}
for $\mu_{\theta_0}$-almost every $y$. 
On this context of direct observation we are interested in estimating the family of {\em a posteriori measures}
\begin{equation} \label{frt}
{\Pi_n (E \mid y)\,=\, \frac{ \int_E \mu_\theta (C_n (y) ) \,d \Pi_0 (\theta) }{ \int_\Theta \mu_\theta (C_n (y) ) \, d \Pi_0 (\theta)},}
\end{equation}
on Borel sets $E \subset \Theta$ which \emph{do not contain $\theta_0$} and  $y\in \Omega$ is a point chosen according to $\mu_{\theta_0}$.
An equivalent form of (\ref{frt})  which may be useful  is
\begin{equation} \label{frt1}
\Pi_n (E \mid y)\,=\, \frac{ \int_E \,  \frac{\mu_{\theta}  \,(\,C_n(y) \,)}{\mu_{\theta_0} \,(\,C_n(y)\,) }     \,d \Pi_0 (\theta)}{ \int_\Theta \,  \frac{\mu_{\theta}  \,(\,C_n(y) \,)}{\mu_{\theta_0} \,(\,C_n(y)\,) }     \,d \Pi_0 (\theta)}.
\end{equation}
Actually, given such kind of $E \subset \Theta$, one can ask wether
the limit 
\begin{equation} \label{yrt}
      \lim_{n \to \infty} \Pi_n (E \mid y)
      =  \lim_{n \to \infty} \frac{1}{n} \log    \frac{ \int_E \mu_\theta (C_n (y) ) d \Pi_0 (\theta) }{ \int_\Theta \mu_\theta (C_n (y) ) d \Pi_0 (\theta)}  \end{equation}
exists for $\mu_{\theta_0}$-almost every $y$.
The following result gives an affirmative answer to this question.

\begin{maintheorem} \label{thm:main}
In the previous context,
\begin{equation*}
      \lim_{n \to \infty} \Pi_n (\cdot \mid y)  = \delta_{\theta_0}, \quad \text{for $\mu_{\theta_0}$-a.e. $y\in \Omega$}.
  \end{equation*}
Moreover the convergence is exponentially fast: for every $\delta>0$ there exists a constant $c_\delta>0$ so that the ball $B_\delta$
of radius $\delta$ around $\theta_0$ satisfies
$
| \Pi_n (B_\delta \mid y)-1 |
	\leqslant
		 e^{ - c_\delta\,n}
$ for every large $n\geqslant 1$.
\end{maintheorem}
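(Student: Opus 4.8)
The plan is to estimate separately the numerator and the denominator of the ratio~\eqref{frt1}, after reducing both to Birkhoff sums of a single Lipschitz observable. Since every $f_\theta=\log J_\theta$ is normalized, its topological pressure vanishes and the Gibbs property~\eqref{eq:Gibbs} holds with a uniform constant $K$ and $P_\theta=0$, i.e. $\mu_\theta(C_n(x))\asymp_{K}e^{S_nf_\theta(x)}$. Dividing the estimates for $\theta$ and for $\theta_0$ one gets, for all $x\in\Omega$ and $n\geqslant1$,
\[
K^{-2}\,e^{S_n\psi_\theta(x)}\;\leqslant\;\frac{\mu_\theta(C_n(x))}{\mu_{\theta_0}(C_n(x))}\;\leqslant\;K^{2}\,e^{S_n\psi_\theta(x)},\qquad \psi_\theta:=\log J_\theta-\log J_{\theta_0},
\]
so the whole analysis is reduced to controlling the Birkhoff sums $S_n\psi_\theta$.

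The next step is to upgrade Birkhoff's ergodic theorem to a statement uniform in $\theta$. Because $\theta\mapsto f_\theta$ is continuous into $\text{Lip}(\Omega,\mathbb R)$ and $\Theta$ is compact, the family $\{\psi_\theta:\theta\in\Theta\}$ is uniformly bounded and equi-Lipschitz (this is precisely what lies behind~\eqref{372}), hence totally bounded in $C(\Omega)$ by Arzel\`a--Ascoli. Approximating it up to $\varepsilon$ by finitely many $\psi_{\theta_1},\dots,\psi_{\theta_m}$ and applying Birkhoff's theorem to each of these with respect to the ergodic measure $\mu_{\theta_0}$, one obtains one common $\mu_{\theta_0}$-full measure set of points $y$ along which $\sup_{\theta\in\Theta}\big|\tfrac1nS_n\psi_\theta(y)-\int\psi_\theta\,d\mu_{\theta_0}\big|\to0$. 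Combined with the reduction above, for every such $y$ and uniformly in $\theta$,
\[
\tfrac1n\log\frac{\mu_\theta(C_n(y))}{\mu_{\theta_0}(C_n(y))}\;\longrightarrow\;\int\psi_\theta\,d\mu_{\theta_0}\;=\;h(\mu_{\theta_0})+\int\log J_\theta\,d\mu_{\theta_0}\;=\;-\,h(\mu_{\theta_0}\mid\mu_\theta)\;=:\;-g(\theta),
\]
where the second equality uses the variational principle~\eqref{mw} at $\theta_0$ and the third the relative-entropy identity behind~\eqref{conseq:vp}.

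Now one exploits the properties of $g\colon\Theta\to[0,+\infty)$: it is continuous, non-negative, and vanishes exactly where $\mu_\theta=\mu_{\theta_0}$; since normalized Lipschitz potentials with the same equilibrium state coincide and $\theta\mapsto\mu_\theta$ is injective by hypothesis, $g(\theta)=0$ iff $\theta=\theta_0$. Hence, given $\delta>0$, compactness yields $2\eta_\delta:=\inf_{\theta\notin B_\delta}g(\theta)>0$, and by continuity at $\theta_0$ one may fix $\delta'=\delta'(\delta)>0$ with $\sup_{B_{\delta'}}g<\eta_\delta/2$. Fixing a generic $y$ and taking $\varepsilon=\eta_\delta/4$ in the uniform convergence, for all $n$ large (depending on $y$ and $\delta$) one has $\tfrac1nS_n\psi_\theta(y)\leqslant-2\eta_\delta+\varepsilon$ on $\Theta\setminus B_\delta$ and $\tfrac1nS_n\psi_\theta(y)\geqslant-\eta_\delta/2-\varepsilon$ on $B_{\delta'}$, hence
\[
\int_{\Theta\setminus B_\delta}\frac{\mu_\theta(C_n(y))}{\mu_{\theta_0}(C_n(y))}\,d\Pi_0(\theta)\;\leqslant\;K^{2}\,e^{-n(2\eta_\delta-\varepsilon)},\qquad Z_n(y)\;\geqslant\;K^{-2}\,\Pi_0(B_{\delta'})\,e^{-n(\eta_\delta/2+\varepsilon)},
\]
where $\Pi_0(B_{\delta'})>0$ because $\Pi_0$ is fully supported. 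Dividing gives $|\Pi_n(B_\delta\mid y)-1|=\Pi_n(\Theta\setminus B_\delta\mid y)\leqslant C_\delta\,e^{-n\eta_\delta}$; absorbing the constant for $n$ large yields $|\Pi_n(B_\delta\mid y)-1|\leqslant e^{-c_\delta n}$ with $c_\delta=\eta_\delta/2$. Since this holds for every $\delta>0$, the measures $\Pi_n(\cdot\mid y)$ converge weakly$^*$ to $\delta_{\theta_0}$.

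The main obstacle is exactly the uniformity in $\theta$ used in the second step: Birkhoff's theorem alone produces, for each fixed $\theta$, a full-measure set of $y$ that a priori depends on $\theta$, and these must be collapsed into one exceptional null set — which is where the equicontinuity of $\{f_\theta\}$ (a consequence of the continuity of $f$ and compactness of $\Theta$) is essential. For the genuinely non-additive loss functions of Theorems~\ref{thm:main2} and~\ref{thm:main3}, where $S_n\psi_\theta$ is replaced by a non-additive sequence, this uniform control is instead obtained from the large deviation upper bounds of~\cite{VZ} together with a Borel--Cantelli argument, which moreover quantifies the exceptional set.
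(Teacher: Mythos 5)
Your argument is correct, and while the upper bound over $\Theta\setminus B_\delta$ is essentially the paper's Lemma~\ref{ufa} (Gibbs property with a uniform constant plus Birkhoff averages of $\log J_\theta-\log J_{\theta_0}$), your treatment of the denominator is a genuinely different route. The paper obtains the lower bound $\liminf_n\frac1n\log\int_{B_\delta}\mu_\theta(C_n(y))\,d\Pi_0(\theta)\geqslant -\Pi_0(B_\delta)h(\mu_{\theta_0})-d_\delta$ by viewing $y\mapsto-\int_{B_\delta}\log\frac{\mu_\theta(C_n(y))}{\mu_{\theta_0}(C_n(y))}d\Pi_0(\theta)$ as an almost-additive sequence (Lemmas~\ref{le:ln-mu} and~\ref{kkk1}), invoking the non-additive large deviations principle of \cite{VZ} (Theorem~\ref{thm.deviations}, Proposition~\ref{rera}) and a Borel--Cantelli argument (Corollary~\ref{rer3}), and then balancing the two exponents in Theorem~\ref{mmai}. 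You instead bound $Z_n(y)$ from below by restricting to a small ball $B_{\delta'}$ around $\theta_0$ where the relative entropy $g(\theta)=h(\mu_{\theta_0}\mid\mu_\theta)$ is small, using the same reduction to Birkhoff sums; the only nontrivial ingredient is the uniform-in-$\theta$ ergodic theorem, which you correctly derive from equi-Lipschitz continuity of $\{\log J_\theta\}$ via Arzel\`a--Ascoli and a countable net, producing a single $\mu_{\theta_0}$-full measure set. This is more elementary (no non-additive thermodynamic formalism or large deviations are needed for Theorem~\ref{thm:main}), it yields a clean rate essentially given by $\inf_{\theta\notin B_\delta}h(\mu_{\theta_0}\mid\mu_\theta)$, and it also settles explicitly a uniformity point that the paper's Lemma~\ref{ufa} passes over quickly (there the ergodic-theorem threshold ``for every large $n$'' is obtained for each fixed $\theta$ and then integrated in $\theta$, so strictly speaking one needs exactly the kind of uniform control you provide). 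What the paper's heavier machinery buys is flexibility: the large deviations estimates for almost-additive sequences are what allow the extension to the non-additive loss functions of Theorems~\ref{thm:main2} and~\ref{thm:main3}, where your additive Birkhoff reduction is no longer available — a point you yourself note at the end.
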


%
%

\medskip
The previous result guarantees that the parameter $\theta_0$, or equivalently the sampling measure $\mu_{\theta_0}$, is identified as the limit of the Bayesian inference process determined by the loss function \eqref{Chalk}. This result arises as a consequence of the quantitative estimates in Theorem~\ref{mmai},
given in the proofs section below.
The direct observation of Gibbs measures was also considered in \cite[Section 2.1]{Nobel1} although
with a different approach. For a parameterized family of loss functions
of the form $\beta \cdot \ell_n(\theta, x,y)$ it is also analyzed  in section 3.7 of \cite{Nobel1} the zero temperature limit (ground states). This is a topic  which can be associated to
ergodic optimization.
Our results are related in some sense to the so called Maximum Likelihood
Identification described \cite{CGL,Cessac,Cha2,Cha,Cha1}

\medskip
The previous context fits in the wider scope of non-additive thermodynamic formalism, using almost-additive sequences of continuous functions 
(see Subsection~\ref{subsec:almostadditive} for the definition).  Indeed,  the loss functions
$(\ell_n)_{n\geqslant 1}$ described in ~\eqref{Chalk} form an almost-additive family 
(cf. Definition~\ref{def.almost.additive} and Lemma~\ref{le:ln-mu}). Furthermore, we will consider loss functions $\ell_n :\Theta \times \Omega \times Y \to \mathbb R$ 
which form an almost-additive sequence of continuous functions, and for which one can write
\begin{equation}\label{eq:lossphi0}
\ell_n(\theta, x,y) =  - \varphi_n(\theta, x,y),
\end{equation}
where $\varphi_n :\Theta \times \Omega \times Y \to \mathbb R_+$ are continuous observables satisfying:
\begin{enumerate}
\item[(A1)] for $\nu$-almost every $y\in Y$ the following limit exists
	$$
	\Gamma^y(\theta):=\lim_{n\to\infty} \frac1n \log \int_\Omega e^{\varphi_n(\theta, x,y)} \, d\mu_\theta(x),
	$$
\item[(A2)] $\Theta\ni\theta \mapsto \Gamma^y(\theta)$ is upper semicontinuous.
\end{enumerate}
Given $y\in Y$ and the loss functions $\ell_n$ satisfying (A1)-(A2), the \emph{a posteriori} measures are
\begin{equation} \label{frtnew2}
\Pi_n (E \mid y)\,=\, \frac{ \int_E  \int_\Omega e^{\varphi_n(\theta,x,y)} \, d\mu_\theta(x) \, d \Pi_0 (\theta)}{ \int_\Theta  \int_\Omega e^{\varphi_n(\theta,x,y)} \, d\mu_\theta(x) \, d \Pi_0 (\theta)}.
\end{equation}

\medskip
\begin{remark}
\emph{
The expression appearing in assumption (A1), which resembles the logarithm of the moment generating function for i.i.d. random variables, is in special cases referred to as the free energy function. Consider the special case where $T=\sigma$ is the shift, $\nu$ is an equilibrium state with respect to a Lipschitz continuous potential $\psi$ and $\varphi_n(\theta, x,y) =\varphi_{n,1}(\theta, x) + \varphi_{n,2}(\theta,y)$, where $\varphi_{n,1}(\theta, x)=\sum_{j=0}^{n-1} \phi_\theta \circ \sigma^j(x)$, $\phi_\theta:\Omega\to\mathbb R$ is Lipschitz continuous and $(\varphi_{n,2}(\theta, \cdot))_{n\geqslant 1}$ is sub-additive.  Then using the fact that the pressure function defined over the space of Lipschitz continuous observables is Gateaux differentiable and the sub-additive ergodic theorem one obtains that
\begin{align*}
\Gamma^y(\theta)
	& = \lim_{n\to\infty} \frac1n \log \int_\Omega e^{\sum_{j=0}^{n-1} \phi_\theta (\sigma^j(x))} \, d\mu_\theta(x)
	+ \inf_{n\geqslant 1} \frac1n \int \varphi_{n,2}(\theta, \cdot) \, d\nu \\
	& = P_{\text{top}}(\sigma, \log J_\theta + \phi_\theta) - P_{\text{top}}(\sigma, \log J_\theta) 	+ \inf_{n\geqslant 1} \frac1n \int \varphi_{n,2}(\theta, \cdot) \, d\nu \\
	& = P_{\text{top}}(\sigma, \log J_\theta + \phi_\theta) + \inf_{n\geqslant 1} \frac1n \int \varphi_{n,2}(\theta, \cdot) \, d\nu ,
\end{align*}
for $\nu$-almost every $y\in \Omega$, hence it is independent of $y$. We refer the reader to Subsection~\ref{subsec:almostadditive} for the concept of topological pressure and further information.
}
\end{remark}


The following result guarantees that the previous Bayesian inference procedure accumulates on the set of probability
measures on the parameter space $\Theta$ that maximize the free energy function $\Gamma^y$.
By assumption (A2) the set $\text{argmax} \,\Gamma^y:=\{\theta_0\in \Theta\colon \Gamma^y(\theta) \leqslant \Gamma^y(\theta_0), \forall \theta\in \Theta\}$ is non-empty. Then we prove the following:

\begin{maintheorem} \label{thm:main2}
Assume $\ell_n$ is a loss function of the form ~\eqref{eq:lossphi0} satisfying assumptions (A1)-(A2). There exists a full
$\nu$-measure subset $Y'\subset Y$ so that, for any $\delta>0$ and $y\in Y'$,
\begin{equation}\label{eq:thm:main2}
      \lim_{n \to \infty} \Pi_n (\Theta\setminus B_\delta^y \mid y)  = 0
      	\quad \text{where} \quad B_\delta^y=\{\theta \in \Theta \colon d_\Theta\big(\,\theta, \text{argmax} \,
	\Gamma^y\,\big)>\delta \}
  \end{equation}
is the open $\delta$-neighborhood of the maximality locus of $\Gamma^y$. In particular,  if $y\in Y'$
is such that $\Theta\ni\theta\mapsto \Gamma^y(\theta)$ has a unique point of maximum $\theta_0^y\in \Theta$
then
$
\lim_{n\to\infty} \Pi_n(\cdot\mid y)=\delta_{\theta^y_0}.
$
\end{maintheorem}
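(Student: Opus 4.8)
The plan is to prove \eqref{eq:thm:main2} by a Laplace–Varadhan type estimate comparing the exponential growth rates of the numerator and denominator defining $\Pi_n$. For a Borel set $E\subseteq\Theta$ write
$$
I_n^y(E):=\int_E\Big(\int_\Omega e^{\varphi_n(\theta,x,y)}\,d\mu_\theta(x)\Big)\,d\Pi_0(\theta),
\qquad\text{so that}\qquad
\Pi_n(E\mid y)=\frac{I_n^y(E)}{I_n^y(\Theta)}.
$$
Let $Y'\subseteq Y$ be the full $\nu$-measure set on which assumption~(A1) holds; fix $y\in Y'$ and $\delta>0$, set $A^y:=\text{argmax}\,\Gamma^y$ (nonempty and compact, by assumption~(A2) and compactness of $\Theta$), $M^y:=\max_\Theta\Gamma^y$, and let $K:=\{\theta\in\Theta:\,d_\Theta(\theta,A^y)\geqslant\delta\}$, a compact set on which $\Gamma^y<M^y$; by assumption~(A2) again, $M_\delta:=\max_K\Gamma^y<M^y$. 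It suffices to establish
$$
\limsup_{n\to\infty}\tfrac1n\log I_n^y(K)\leqslant M_\delta
\qquad\text{and}\qquad
\liminf_{n\to\infty}\tfrac1n\log I_n^y(\Theta)\geqslant M^y,
$$
since together these give $\Pi_n(K\mid y)\leqslant e^{(M_\delta-M^y)n+o(n)}$, which yields \eqref{eq:thm:main2} with exponential speed; and when $A^y=\{\theta_0^y\}$ this forces $\Pi_n(\cdot\mid y)\to\delta_{\theta_0^y}$ by testing against continuous functions on $\Theta$.

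The bridge from the merely pointwise information in (A1) to these integral asymptotics is the following uniformity claim, which I would prove first: for $y\in Y'$ the functions $G_n(\theta,y):=\frac1n\log\int_\Omega e^{\varphi_n(\theta,x,y)}\,d\mu_\theta(x)$ converge to $\Gamma^y(\theta)$ \emph{uniformly} on $\Theta$, and in particular $\theta\mapsto\Gamma^y(\theta)$ is continuous. Each $G_n(\cdot,y)$ is continuous and uniformly bounded ($\sup_n\frac1n\|\varphi_n\|_\infty=:L<\infty$ follows from almost-additivity and boundedness of $\varphi_1$), and the family $\{G_n(\cdot,y)\}_n$ is asymptotically equicontinuous in $\theta$: using the uniform Gibbs constant in \eqref{eq:Gibbs} (with $P_\theta=0$ since $f_\theta=\log J_\theta$ is normalized) and the summable-variation (bounded distortion) property of almost-additive potentials, one gets $\mu_{\theta_1}(C_n(x))\leqslant C\,e^{\,n\|f_{\theta_1}-f_{\theta_2}\|_\infty}\,\mu_{\theta_2}(C_n(x))$, and feeding this into the ratios defining $G_n$ gives $|G_n(\theta_1,y)-G_n(\theta_2,y)|\leqslant 2\,\omega(d_\Theta(\theta_1,\theta_2))+C/n$, where $\omega$ is a modulus of continuity for $\theta\mapsto f_\theta$ in the Lipschitz norm. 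Pointwise convergence plus asymptotic equicontinuity then gives uniform convergence and continuity of $\Gamma^y$ by a standard $\varepsilon/3$ argument. Granting this, the upper estimate is immediate, since $I_n^y(K)\leqslant e^{\,n\sup_{\theta\in K}G_n(\theta,y)}$ and $\sup_K G_n(\cdot,y)\to\max_K\Gamma^y=M_\delta$; and the lower estimate follows because, for every $\eta>0$, continuity of $\Gamma^y$ makes $U_\eta:=\{\theta:\Gamma^y(\theta)>M^y-\eta\}$ a nonempty open set, hence $\Pi_0(U_\eta)>0$ by the full-support Hypothesis~A, while $G_n\geqslant M^y-2\eta$ on $U_\eta$ for all large $n$ by uniform convergence, so $\liminf_n\frac1n\log I_n^y(\Theta)\geqslant M^y-2\eta$; let $\eta\downarrow0$.

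The step I expect to be the main obstacle is precisely this uniformity claim, i.e.\ upgrading (A1)--(A2) to uniform convergence of $G_n(\cdot,y)$: without continuity of $\Gamma^y$ the maximum $M^y$ could be attained on a $\Pi_0$-null set, the denominator could grow strictly slower than $e^{nM^y}$, and the comparison above would collapse. The ingredients to overcome it are the two-sided almost-additivity of the loss functions $(\varphi_n)$ together with the \emph{uniform} Gibbs and bounded-distortion estimates for the family $\{\mu_\theta\}_{\theta\in\Theta}$ coming from compactness of $\Theta$ and continuity of $\theta\mapsto f_\theta$ (and, if one argues via the transfer operators $\mathcal L_{\log J_\theta}$ and the dynamics $\sigma\times T$ on $\Omega\times Y$ rather than via cylinders, Birkhoff's ergodic theorem along the orbit of $y$ to absorb the shifted arguments $T^{\,jm}y$); large deviation bounds for non-additive potentials as in \cite{VZ} enter here and also produce the exponential rate $c_\delta>0$. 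Finally, in the concrete situation underlying Theorem~\ref{thm:main}, where $\ell_n$ is given by \eqref{Chalk} and $\int_\Omega e^{\varphi_n(\theta,x,y)}\,d\mu_\theta(x)=\mu_\theta(C_n(y))/\mu_{\theta_0}(C_n(y))$, this scheme simplifies considerably: the Gibbs property \eqref{eq:Gibbs} and the Shannon–McMillan–Breiman theorem identify $\Gamma^y(\theta)=h(\mu_{\theta_0})+\int_\Omega\log J_\theta\,d\mu_{\theta_0}=-\,h(\mu_{\theta_0}\mid\mu_\theta)\leqslant0$, with equality iff $\mu_\theta=\mu_{\theta_0}$, i.e.\ iff $\theta=\theta_0$; hence $A^y=\{\theta_0\}$ and one recovers $\lim_n\Pi_n(\cdot\mid y)=\delta_{\theta_0}$ with the quantitative rate of Theorem~\ref{thm:main}.
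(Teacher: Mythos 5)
Your overall architecture (two-sided Laplace estimate for numerator and denominator) is fine, but it is routed through a claim that the hypotheses do not supply, and your own sketch of that claim does not close it. Everything hinges on the assertion that $G_n(\theta,y)=\frac1n\log\int_\Omega e^{\varphi_n(\theta,x,y)}\,d\mu_\theta(x)$ converges to $\Gamma^y$ \emph{uniformly} in $\theta$, so that $\Gamma^y$ is continuous. The estimate you propose, $\mu_{\theta_1}(C_n(x))\leqslant C\,e^{\,n\|f_{\theta_1}-f_{\theta_2}\|_\infty}\,\mu_{\theta_2}(C_n(x))$, only controls the change of the sampling measure $\mu_\theta$; it says nothing about the change of the integrand $e^{\varphi_n(\theta,x,y)}$ as $\theta$ varies. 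Assumptions (A1)--(A2), together with almost-additivity (which acts in the dynamical variable, with constants that may depend on $\theta$), give no modulus of continuity for $\frac1n\varphi_n(\cdot,x,y)$ that is uniform in $n$, so the bound $|G_n(\theta_1,y)-G_n(\theta_2,y)|\leqslant 2\,\omega(d_\Theta(\theta_1,\theta_2))+C/n$ is unjustified; even the change-of-measure half would need $\varphi_n(\theta,\cdot,y)$ to have bounded oscillation on $n$-cylinders, which is not assumed. Note also that continuity of $\Gamma^y$ is strictly stronger than hypothesis (A2): the paper's examples produce $\Gamma^y$ as an infimum of continuous functions, hence a priori only upper semicontinuous, so any argument that first upgrades (A2) to continuity is proving a theorem with stronger hypotheses. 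The place where this matters is exactly your lower bound $\liminf_n\frac1n\log I_n^y(\Theta)\geqslant M^y$: under mere upper semicontinuity the maximizing set may be $\Pi_0$-negligible and no neighborhood of it need have $\Gamma^y$ close to $M^y$, so the denominator need not grow at rate $M^y$ and the comparison collapses — the very obstacle you flag, but which your bridge lemma does not overcome.

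For contrast, the paper's proof never upgrades (A2) to continuity and treats the denominator differently. It separates the trivial case $\Gamma^y\equiv\alpha^y$, and otherwise uses (i) full support of $\Pi_0$ to get $\int_\Theta\Gamma^y\,d\Pi_0<\alpha^y:=\max_\Theta\Gamma^y$ and an interchange of $\limsup$ and integral to bound $\int_\Theta\int_\Omega e^{\varphi_n}\,d\mu_\theta\,d\Pi_0\leqslant e^{(\alpha^y-\zeta)n}$ for large $n$, and (ii) upper semicontinuity to produce $d_\delta>0$ and the bound $\int_\Omega e^{\varphi_n(\theta,x,y)}\,d\mu_\theta(x)\geqslant e^{(\alpha^y-d_\delta)n}$ for $\theta$ in the $\delta$-neighborhood $B_\delta^y$ of $\text{argmax}\,\Gamma^y$, whence $\int_{B_\delta^y}\int_\Omega e^{\varphi_n}\,d\mu_\theta\,d\Pi_0\geqslant\Pi_0(B_\delta^y)\,e^{(\alpha^y-d_\delta)n}$ and $\Pi_n(\Theta\setminus B_\delta^y\mid y)\leqslant\Pi_0(B_\delta^y)^{-1}e^{-(\zeta-d_\delta)n}$ for small $\delta$. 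In particular the paper compares the complement against the neighborhood of the argmax with rates $\zeta$ and $d_\delta$ extracted from the averaged value $\int_\Theta\Gamma^y\,d\Pi_0$ and from (A2), rather than against a growth rate $M^y$ for the whole denominator; your claimed rate $M^y$ is not available from (A1)--(A2) alone. To salvage your scheme you would need either an explicit additional hypothesis giving equicontinuity of $\big(\frac1n\varphi_n(\cdot,x,y)\big)_{n\geqslant1}$ in $\theta$ (for instance the additive case $\varphi_n=S_n\phi_\theta$ with $\theta\mapsto\phi_\theta$ continuous), or to replace the Laplace lower bound at rate $M^y$ by an argument, as in the paper, that only uses the pointwise limit (A1) on a set of parameters of positive $\Pi_0$-measure where $\Gamma^y\geqslant\alpha^y-d_\delta$.
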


\medskip
Finally, inspired by the log-likelihood estimators in the context of Bayesian statistics it is also natural to consider
the loss functions $\ell_n :\Theta \times X \times Y \to \mathbb R$ defined  by
\begin{equation}\label{eq:lossphi}
\ell_n(\theta, x,y) = -\log \varphi_n(\theta, x,y)
\end{equation}
associated to an almost additive sequence $\Phi=(\varphi_n)_{n\geqslant 1}$
of continuous observables $\varphi_n :\Theta \times X \times Y \to \mathbb R_+$
satisfying
\begin{enumerate}
\item[(H1)] for each $\theta\in \Theta$ and $x\in X$ there exists a constant $K_{\theta,x}>0$ so that, for every $y\in Y$,
\begin{equation*}
\varphi_n(\theta,x,y) + \varphi_m(\theta,x,T^n(y)) - K_{\theta,x}
	\leqslant \varphi_{m+n}(\theta,x,y)
	\leqslant \varphi_n(\theta,x,y) + \varphi_m(\theta,x,T^n(y)) + K_{\theta,x}
\end{equation*}
\item[(H2)] $\int K_{\theta,x} d\mu_{\theta}(x)<\infty$ for every $\theta\in \Theta$.
\end{enumerate}
In this context, the loss functions
induce the \emph{a posteriori} measures
\begin{equation} \label{frtnew2}
\Pi_n (E \mid y)\,=\, \frac{ \int_E  \psi_n(\theta,y)\, d \Pi_0 (\theta)}{ \int_\Theta  \psi_n(\theta,y) \, d \Pi_0 (\theta)},
\quad\text{where}\quad \psi_n(\theta,y)=\int_\Omega \varphi_n(\theta,x,y) \, d\mu_\theta(x).
\end{equation}
%
%
Therefore, even though the loss functions are not almost-additive, due to the logarithmic term, we have the following result for the latter
non-additive loss functions:

\begin{maintheorem} \label{thm:main3}
Assume that the loss function of the form ~\eqref{eq:lossphi} satisfies assumptions (H1)-(H2) above.
There exists a non-negative function $\psi_*:\Theta \to \mathbb R_+$ (depending on $\Psi^\theta=(\psi_n(\theta,\cdot))_{n\ge 1}$)
so that for $\nu$-almost every $y\in Y$
the \emph{a posteriori} measures $(\Pi_n (\cdot \mid y))_{n\geqslant 1}$ are convergent and
\begin{equation*}
\Pi_n (\cdot \mid y)\,=\, \frac{ \int_{\cdot}  \psi_n(\theta,y)\, d \Pi_0 (\theta)}{ \int_\Theta  \psi_n(\theta,y) \, d \Pi_0 (\theta)} 	
	\longrightarrow \Pi_*(\cdot):=\frac{(\psi_* \Pi_0)(\cdot)}{(\psi_* \Pi_0)(\Theta)}
\end{equation*}
as $n\to\infty$.
Moreover, if $T=\sigma$ is a subshift of finite type, $\nu\in \mathcal M_\sigma(\Omega)$ is a Gibbs measure with respect to
a Lipschitz continuous potential 
and $\inf_{\theta\in \Theta}\psi_*(\theta)>0$ then for each $g\in C(\Theta, \mathbb R)$ there exists $c>0$ so that
\begin{align}
\limsup_{n \to \infty} \frac{1}{n}\,\,  & \log \, \nu ( \{\,y \in \Omega :
	\Big|\int g\, d\Pi_n (\cdot \mid y) - \int g\, d\Pi_*\Big|\geqslant\delta  \}) \nonumber \\
		& \leqslant  \sup_{\theta\in \Theta}
		\sup_{\{\eta\colon |{\mathcal F}(\eta,\Psi^\theta) -\psi_*(\theta)|\geqslant c \delta \}}
		\Big\{-P(\sigma,\varphi)+h_\eta(\sigma) + \int \varphi\, d\eta \Big\}, \label{eq:lde}
\end{align}
where  ${\mathcal F} (\eta, \Psi^\theta) := \lim_{n \to \infty} \frac{1}{n} \int \psi_n(\theta,\cdot) \, d \eta.$
If, additionally, the map $\Theta\ni \theta\mapsto\mathcal F(\eta,\Psi^\theta)$ is continuous for each
$\eta\in \mathcal M_\sigma(\Omega)$ then the right hand-side in ~\eqref{eq:lde} is strictly negative.
\end{maintheorem}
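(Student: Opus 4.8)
The plan is to exploit the fact that, although the loss functions $\ell_n=-\log\varphi_n$ are not almost-additive (because of the logarithm), the \emph{averaged} sequence $\Psi^\theta=(\psi_n(\theta,\cdot))_{n\ge 1}$ \emph{is}; then the convergence statement follows from the subadditive ergodic theorem and the quantitative estimate from the large deviation principle for non-additive potentials. First I would establish almost-additivity of $\Psi^\theta$ and identify $\psi_*$. Integrating the two-sided inequality (H1) against $d\mu_\theta(x)$, using (H2) and the identity $\int_\Omega\varphi_m(\theta,x,T^n y)\,d\mu_\theta(x)=\psi_m(\theta,T^n y)$, one sees that for each $\theta\in\Theta$ the sequence $\Psi^\theta$ is almost-additive with respect to $T$, with summable constant $\tilde K_\theta:=\int_\Omega K_{\theta,x}\,d\mu_\theta(x)<\infty$; equivalently $\big(y\mapsto\psi_n(\theta,y)+\tilde K_\theta\big)_{n\ge 1}$ is a non-negative subadditive cocycle over $(T,\nu)$. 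By Fekete's lemma the limit
\[
\psi_*(\theta):=\mathcal F(\nu,\Psi^\theta)=\lim_{n\to\infty}\frac1n\int_Y\int_\Omega\varphi_n(\theta,x,y)\,d\mu_\theta(x)\,d\nu(y)
\]
exists, is finite and non-negative; this is the function in the statement. Iterating the upper half of the integrated (H1) also gives $\tfrac1n\psi_n(\theta,y)\le\tfrac1n\sum_{j=0}^{n-1}\psi_1(\theta,T^j y)+\tilde K_\theta$, so under the mild uniform boundedness built into the hypotheses (boundedness of $\varphi_1$ and of $(\theta,x)\mapsto K_{\theta,x}$, automatic when $Y$ is compact, as in the applications) there is $R>0$ with $\tfrac1n\psi_n(\theta,y)\le R$ for all $n,\theta,y$.

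For the convergence statement, fix $\theta$: Kingman's subadditive ergodic theorem applied to the above cocycle (using ergodicity of $\nu$) gives $\tfrac1n\psi_n(\theta,y)\to\psi_*(\theta)$ for $\nu$-a.e.\ $y$. Applying Fubini to the measurable set $G=\{(\theta,y):\tfrac1n\psi_n(\theta,y)\to\psi_*(\theta)\}$, whose $\theta$-sections are $\nu$-conull, yields a full $\nu$-measure set $Y'\subset Y$ such that for every $y\in Y'$ one has $\tfrac1n\psi_n(\theta,y)\to\psi_*(\theta)$ for $\Pi_0$-a.e.\ $\theta$. Since for $g\in C(\Theta)$
\[
\int_\Theta g\,d\Pi_n(\cdot\mid y)=\frac{\int_\Theta g(\theta)\,\tfrac1n\psi_n(\theta,y)\,d\Pi_0(\theta)}{\int_\Theta\tfrac1n\psi_n(\theta,y)\,d\Pi_0(\theta)},
\]
the bound $R$ lets one pass to the limit by dominated convergence in numerator and denominator; hence (provided $\psi_*$ is not $\Pi_0$-a.e.\ zero, automatic in the second part since $\inf_\theta\psi_*(\theta)>0$) $\int g\,d\Pi_n(\cdot\mid y)\to\frac{\int g\psi_*\,d\Pi_0}{\int\psi_*\,d\Pi_0}=\int g\,d\Pi_*$, i.e.\ $\Pi_n(\cdot\mid y)\to\Pi_*$ for every $y\in Y'$.

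For the large deviation bound, assume $T=\sigma$, $\nu=\mu_\varphi$ the Gibbs/equilibrium state of the Lipschitz potential $\varphi$ (so $P(\sigma,\varphi)=h_\nu(\sigma)+\int\varphi\,d\nu$) and $\inf_\theta\psi_*(\theta)>0$. Elementary estimates on the ratio above, using $\inf\psi_*>0$ and $\|g\|_\infty<\infty$, give $c_1=c_1(g,\psi_*)>0$ with
\[
\Big\{y:\Big|\int g\,d\Pi_n(\cdot\mid y)-\int g\,d\Pi_*\Big|\ge\delta\Big\}\subset\Big\{y:\int_\Theta\big|\tfrac1n\psi_n(\theta,y)-\psi_*(\theta)\big|\,d\Pi_0(\theta)\ge c_1\delta\Big\},
\]
and since $\Pi_0$ is a probability measure and $\theta\mapsto\tfrac1n\psi_n(\theta,y)$ is continuous on the compact set $\Theta$, the right-hand event forces $\big|\tfrac1n\psi_n(\theta,y)-\psi_*(\theta)\big|\ge c_1\delta$ for some $\theta\in\Theta$. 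Using the equicontinuity in $\theta$ of $\{\tfrac1n\psi_n(\cdot,y)\}_{n,y}$ (inherited from the regularity of $\theta\mapsto\varphi_n$; see the examples of Section~\ref{sec:examples}), cover $\Theta$ by finitely many balls $B(\theta_i,r)$, $1\le i\le N$, on which $\tfrac1n\psi_n(\cdot,y)$ and $\psi_*$ oscillate by less than $c_1\delta/2$ (equicontinuity together with $\Pi_0$-a.e.\ convergence and the full support of $\Pi_0$ in fact upgrades $\tfrac1n\psi_n(\cdot,y)\to\psi_*$ to uniform convergence on $\Theta$, so $\psi_*$ is continuous); then
\[
\Big\{y:\exists\theta,\ \big|\tfrac1n\psi_n(\theta,y)-\psi_*(\theta)\big|\ge c_1\delta\Big\}\subset\bigcup_{i=1}^N\Big\{y:\big|\tfrac1n\psi_n(\theta_i,y)-\psi_*(\theta_i)\big|\ge\tfrac{c_1\delta}{2}\Big\}.
\]
Taking $\nu$, then $\tfrac1n\log(\cdot)$ and $\limsup_n$, the term $\tfrac1n\log N$ vanishes, $\limsup$ commutes with the finite maximum, and for each $i$ one is left with a large deviation event for the almost-additive sequence $\Psi^{\theta_i}$ under $\nu=\mu_\varphi$, with $\psi_*(\theta_i)=\mathcal F(\nu,\Psi^{\theta_i})$. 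The large deviation upper bound for almost-additive observables with respect to the Gibbs measure $\nu$ of~\cite{VZ}, combined with the variational principle for $\nu$ (and the contraction principle through $\eta\mapsto\mathcal F(\eta,\Psi^{\theta_i})$), bounds it by $\sup\{-P(\sigma,\varphi)+h_\eta(\sigma)+\int\varphi\,d\eta:\eta\in\mathcal M_\sigma(\Omega),\ |\mathcal F(\eta,\Psi^{\theta_i})-\psi_*(\theta_i)|\ge c_1\delta/2\}$; maximizing over $i$, dominating by the supremum over all $\theta\in\Theta$, and setting $c:=c_1/2$, gives~\eqref{eq:lde}.

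Finally, under the additional hypothesis that $\theta\mapsto\mathcal F(\eta,\Psi^\theta)$ is continuous for each $\eta$, combining this with the continuity of $\eta\mapsto\mathcal F(\eta,\Psi^\theta)$ for almost-additive sequences and with the continuity of $\psi_*$ shown above, the constraint set $\mathcal K=\{(\theta,\eta):|\mathcal F(\eta,\Psi^\theta)-\psi_*(\theta)|\ge c\delta\}$ is a closed, hence compact, subset of $\Theta\times\mathcal M_\sigma(\Omega)$ that contains no pair $(\theta,\nu)$, because $\mathcal F(\nu,\Psi^\theta)=\psi_*(\theta)$. On $\mathcal K$ the upper semicontinuous functional $\eta\mapsto-P(\sigma,\varphi)+h_\eta(\sigma)+\int\varphi\,d\eta$ attains its maximum, which by the variational principle is $\le 0$ with equality only at the unique equilibrium state $\nu$ of $\varphi$, which is excluded; hence the right-hand side of~\eqref{eq:lde} is strictly negative. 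The main obstacle throughout is the passage from the per-parameter statements — the subadditive ergodic theorem and the large deviation principle, which hold for each fixed $\theta$ — to versions uniform over the compact parameter space $\Theta$; this is precisely where the equicontinuity in $\theta$ of the normalized potentials $\tfrac1n\psi_n(\cdot,y)$ enters, both in the Fubini step and, more delicately, in the finite-cover step.
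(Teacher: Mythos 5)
Your proposal is correct and follows essentially the same route as the paper: integrating (H1)--(H2) to make $\Psi^\theta=(\psi_n(\theta,\cdot))_n$ almost additive, Kingman/Fekete to identify $\psi_*$ and pass to the limit in the ratio defining $\Pi_n(\cdot\mid y)$, the per-parameter large deviation bound of \cite{VZ} combined with a ratio decomposition (the paper's Lemma~\ref{leaux} plays the role of your elementary estimate) and a supremum over $\Theta$, and finally compactness plus uniqueness of the equilibrium state for the strict negativity. The only notable difference is that where the paper passes directly from the event ``$|\tfrac1n\psi_n(\theta,y)-\psi_*(\theta)|\geqslant\xi$ for some $\theta$'' to the supremum over $\theta$ of the individual rates, you justify this step with an equicontinuity-in-$\theta$ and finite-cover argument (at the price of a mild extra regularity assumption), which is a more careful treatment of a point the paper asserts without further detail.
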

\color{black}

\color{black}

The previous theorem ensures that, in the context of loss functions of the form ~\eqref{eq:lossphi} satisfying properties (H1) and (H2) above, the a posteriori measures do converge exponentially fast to a probability measure  on the parameter space which is typically fully supported.
We refer the reader to Example~\ref{ex0} for more details in the special case the loss function depends exclusively on one parameter.

\begin{remark}
\emph{
For completeness, let us mention that the results by Kifer \cite{Kif} suggest that level-2 large deviations estimates (ie, the rate of convergence of $\Pi_n (\cdot \mid y)$
to $\Pi_*$ on the space of probability measures on $\Theta$)
are likely to hold under the assumption that the limit
$
\lim_{n\to\infty} \frac1n \log \int e^{\varphi_n} \,d\nu
$
exists for all almost-additive sequences $\Phi=(\varphi_n)_{n\geqslant 1}$ of continuous observables and defines a non-additive free energy function which is related to the non-additive topological pressure. This extrapolates the scope of our interest here.
}
\end{remark}

\section{Examples}\label{sec:examples}

In what follows we give some examples which illustrate the intuition and utility of the Bayesian inference
and also the meaning of the \emph{a priori} measures. 

\begin{example} \label{exlo}  The space of all stationary Markov probability measures $\mu$ in $\Omega=\{1,2\}^\mathbb{N}$ is described
by the space of column stochastic transition matrices $P$ with all positive entries. These matrices $P$ can be parameterized by the open
square $\Theta =(0,1) \times (0,1)$ through the parameterization
$$
M_{(a,b)}= \left( \begin{array}{cc} P_{11} & P_{12} \\ P_{21}& P_{22} \end{array} \right)=\left( \begin{array}{cc} a &
 1-b \\ 1-a& b \end{array} \right),
	 \qquad (a,b) \in (0,1) \times (0,1).
 $$
 In this case the associated normalized Jacobian $J_{(a,b)}(w)$  has  constant value  on cylinders of size  two. More precisely,  for $w$ on the cylinder
$[i, j]\subset \Omega$ we get $J = \frac{\pi_i \,P_{i,j}}{\pi_j}$,  where $(\pi_1,\pi_2)$ is the initial invariant probability vector. For each value $(a,b)$ denote by $\mu_{(a,b)}$ the stationary Markov probability measure  associated to the stochastic matrix $ M_{(a,b)}$. In this case we get that $ h(\mu_{(a,b)}) \,+\, \int \log J_{(a,b)} \, d \mu_{(a,b)} =0$ and $\mathcal{L}_{\log J_{(a,b)}}^* (\mu_{(a,b)})= \mu_{(a,b)}$ (see \cite{L3,Sp}).
We refer the reader to \cite{Douc,Douc1,Douc2,Tan} for applications of the use of the maximum likelihood estimator in this context of Markov probability measures.
One possibility would be to take the probability measure  $\Pi_0$ on the $\Theta$ space as the Lebesgue probability measure  on $(0,1) \times (0,1).$ Different choices of loss functions would lead to different solutions for the claim of Theorem \ref{thm:main2}.
\end{example}

The first of the following examples are very simple and illustrate some trivial contexts.
Whenever the parameter space $\Theta$ (or $Y$) is a singleton the Bayesian inference is trivial, hence it carries no information. The first example
we shall consider is when the loss function depends exclusively on a single variable. Nevertheless, as loss functions are non-additive, these results could not be
handled with the previous literature in the subject.

\begin{example} \label{ex0}
Assume that $\Theta\subset \mathbb R^d$ is a compact set, $Y=\Omega$ and  $T=\sigma: \Omega \to \Omega$ is a subshift of finite
type. In the case that the loss functions $\ell_n \colon \Theta \times \Omega \times Y \to\mathbb R$ are generated by an almost-additive sequence of continuous observables $\Phi=(\varphi_n)_{n\geqslant 1}$ by
$
\ell_n(\theta, x,y) = -\log \varphi_n(y)
$
which are independent of $\theta$ and $x$, the loss function gives no information on the parameter space. For that reason
it is natural that the \emph{a posteriori} measures are
\begin{equation} \label{frtnew}
\Pi_n (E \mid y)\,=\, \frac{ \int_E \varphi_n(y) \,d \Pi_0 (\theta) }{ \int_\Theta \varphi_n(y) \, d \Pi_0 (\theta)}
	= \Pi_0 (E)
\end{equation}
for every sampling $y, T(y), \dots, T^{n-1}(y) \in Y$.

Now, assuming alternatively that the loss function is given by
$
\ell_n(\theta, x,y) = -\log \varphi_n(\theta),
$
which is independent on both $x$ and $y$, a simple computation shows that
\begin{equation} \label{frtnew-theta}
\Pi_n (E \mid y)\,=\, \frac{ \int_E \varphi_n(\theta) \,d \Pi_0 (\theta) }{ \int_\Theta \varphi_n(\theta) \, d \Pi_0 (\theta)}.
\end{equation}
In this case the loss function neglects the observable dynamical system $T$, hence the a posteriori measures are
independent of the sampling. Yet, as the family $\Phi$ is almost-additive it is easy to check that there exists $C>0$ so that
$\{\varphi_n+C\}_{n\geqslant 1}$ is sub-additive. In particular, a simple application of Fekete's lemma (cf. Lemma~\ref{le:Fekete}) ensures
that the limit $\lim_{n\to\infty}\frac{\phi_n(\theta)}n$ does exists and coincides with
$\phi_*(\theta):=\inf_{n\geqslant 1} \frac{\phi_n(\theta)}n$, for every $\theta\in \Theta$. In consequence,
\begin{equation} \label{le:Fekete}
\lim_{n\to\infty} \Pi_n (E \mid y)= \lim_{n\to\infty} \frac{ \int_E \frac{\varphi_n(\theta)}n \,d \Pi_0 (\theta) }{ \int_\Theta \frac{\varphi_n(\theta)}n \, d \Pi_0 (\theta)}
	= \Pi(E):= \frac{ \int_E \varphi_*(\theta) \,d \Pi_0 (\theta) }{ \int_\Theta \varphi_*(\theta) \, d \Pi_0 (\theta)},
\end{equation}
independently of the sampling $y$. In particular the limit measure $\Pi$ is fully supported on $\Theta$ if and only
if $\varphi_*(\theta)>0$ for every $\theta\in \Theta$.

Finally, for each $n\geqslant 1$ and almost-additive sequence
of continuous observables $\Phi=(\varphi_n)_{n\geqslant 1}$ on $X$, consider the loss function
$$
\ell_n(\theta, x,y) = -\log \varphi_n(x),
$$
In this case a simple computation shows that one obtains \emph{a posteriori} measures
\begin{equation} \label{frtnew2}
\Pi_n (E \mid y)\,=\, \frac{ \int_E  \psi_n(\theta)\, d \Pi_0 (\theta)}{ \int_\Theta  \psi_n(\theta) \, d \Pi_0 (\theta)},
\end{equation}
where the sequence $\psi_n(\theta)=\int_\Omega \varphi_n(x) \, d\mu_\theta(x)$ is almost additive. Indeed, the
$\sigma$-invariance of $\mu_\theta$ and the almost-additivity condition
$
\varphi_{n}(x) + \varphi_{m}(\sigma^n(x)) - C
	\leqslant \varphi_{m+n}(x)
	\leqslant \varphi_{n}(x) + \varphi_{m}(\sigma^n(x)) +C
$
ensures that
$
\psi_{n}(\theta) + \psi_{m}(\theta) - C
	\leqslant \psi_{m+n}(\theta)
	\leqslant  \psi_{n}(\theta) + \psi_{m}(\theta) + C
$
for every $m,n\geqslant 1$ and $\theta\in \Omega$.
Hence, even though the feed of information is given
through the $x$-variable, the \emph{a posteriori} measures are of the form ~\eqref{frtnew}, and
their convergence is described by Lemma~\ref{le:Fekete}.
In particular, this example shows that the situation is much simpler to describe when the loss functions depend exclusively
on a single variable.

\end{example}

In the following two simple examples, we will make explicit computations on the limit of posterior
distributions which shows that the assumption (A) on the space of parameters and a priori distribution cannot be removed. 
In particular, these will show that the posterior distributions $\Pi_n(\cdot\mid y)$ may converge but 
not for a Dirac measure on the parameter $\theta_0$ corresponding to the measure with respect to which the sampling occurs.

\begin{example} \label{ex01}
Set $\Theta=\{-1,1\}$, $T=\sigma: \{0,1\}^{\mathbb N} \to \{0,1\}^{\mathbb N}$ be the full shift and let $\mathbb B(a,b)$ denote the Bernoulli measure with $\nu[0]=a$ and $\nu[1]=b$, for $a+b=1$, $0<a<1$.  If $\phi: \{0,1\}^{\mathbb N} \to\mathbb R$ is a locally constant normalized potential so that $\phi\mid_{[0]}=c<0$ then it is not hard to deduce (see e.g. \cite{BowenLNM}) that $\phi\mid_{[1]}=\log(1-e^c)$ and the unique equilibrium state for $\sigma$ with respect to $\phi$ is the probability measure  $\mathbb B(e^{c}, 1-e^c)$.
Assume that $\mu_{-1}=\mathbb B(\frac13,\frac23)$ and $\mu_{1}=\mathbb B(\frac23,\frac13)$ which are the unique equilibrium states for the potentials
$$
\phi_{-1}(x):=
\begin{cases}
-\log 3, x\in[0] \\
-\log \frac32, x\in[1]
\end{cases}
	\quad\text{and}\quad
\phi_{1}(x):=
\begin{cases}
-\log \frac32, x\in[0] \\
-\log 3, x\in[1],
\end{cases}
$$
respectively. Take $\Pi_0=\frac12\delta_{-1}+\frac12\delta_{1}$ and $\nu=\mathbb B(\frac12,\frac12)$ and notice that $\nu$ does not belong to the family $(\mu_\theta)_{\theta}$.
On the context of direct observation we are interested in describing the {\em a posteriori} measures
\begin{equation*}
{\Pi_n (E \mid y)\,=\, \frac{ \int_E \mu_\theta (C_n (y) ) \,d \Pi_0 (\theta) }{ \int_\Theta \mu_\theta (C_n (y) ) \, d \Pi_0 (\theta)},}
\end{equation*}
for samplings over $\nu$. The ergodic theorem ensures that
$$
\lim_{n\to\infty}\frac1n\#\{0\leqslant j \leqslant n-1 \colon \sigma^j(y) \in [0]\}
	= \lim_{n\to\infty}\frac1n\#\{0\leqslant j \leqslant n-1 \colon \sigma^j(y) \in [1]\}
	= \frac12
$$
for $\nu$-almost every $y$. The Bernoulli property of $\mu_{\pm1}$ then implies that, for $\nu$-a.e.
$y$,
$$
\frac{\mu_{1} (C_n (y) )}{\mu_{-1} (C_n (y) )} \to 1 \quad\text{as $n\to\infty$}
$$
and, consequently, the sequence of probability measures $\Pi_n (\cdot \mid y)$ on $\{-1,1\}$ is convergent as
\begin{equation*}
\lim_{n\to\infty }\Pi_n (\{\pm1\} \mid y)
	\,=\, \lim_{n\to\infty } \frac{ \mu_{\pm 1} (C_n (y) ) }{\mu_{-1} (C_n (y) ) + \mu_{1} (C_n (y) )} = \frac12.
\end{equation*}
In other words, $\lim_{n\to\infty }\Pi_n (\cdot \mid y)=\frac12\delta_{-1}+\frac12\delta_{1}=\Pi_0$. This convergence reflects the fact that $\int \phi_{-1}\, d\nu=\int \phi_{1}\, d\nu$. Finally, it is not hard to check that for any a priori measure
$\Pi_0=\alpha\delta_{-1}+(1-\alpha)\delta_{1}$ for some $0<\alpha<1$ it still holds that
$\lim_{n\to\infty }\Pi_n (\cdot \mid y)=\Pi_0$.
\end{example}

\begin{example} \label{ex001}
In the context of Example~\ref{ex01}, assume that the sampling is done with respect to a non-symmetric Bernoulli measure
$\hat \nu=\mathbb B(\alpha,1-\alpha)$ for some $0<\alpha<\frac12$. The ergodic theorem guarantees that, for $\hat \nu$-a.e. $y$,
 $$
\frac{\mu_{1} (C_n (y) )}{\frac{2^{\alpha n}}{3^n}} \to 1
	\quad \text{and}\quad
\frac{\mu_{-1} (C_n (y) )}{\frac{2^{(1-\alpha) n}}{3^n}} \to 1
	\quad\text{as $n\to\infty$}
$$
and, consequently, $\mu_{1} (C_n (y) ) / \mu_{-1} (C_n (y) ) \to 0$ as $n\to\infty$.
 Altogether we get
 \begin{equation*}
\lim_{n\to\infty }\Pi_n (\{1\} \mid y)
	\,=\, \lim_{n\to\infty } \frac{ \mu_{ 1} (C_n (y) ) }{\mu_{-1} (C_n (y) ) + \mu_{1} (C_n (y) )}
	\,=\, \lim_{n\to\infty } \frac{ \frac{ \mu_{ 1} (C_n (y) ) }{\mu_{-1} (C_n (y) )} }{1+\frac{ \mu_{ 1} (C_n (y) ) }{\mu_{-1} (C_n (y) )}} =0
\end{equation*}
 and $\lim_{n\to\infty }\Pi_n (\{-1\} \mid y)=1$. In other words, $\lim_{n\to\infty }\Pi_n (\cdot\mid y)=\delta_{-1}$
for $\hat\nu$-almost every $y$, which reflects the fact that $\int \phi_{1}\, d\hat\nu<\int \phi_{-1}\, d\hat\nu$.
\end{example}



\begin{example} \label{ex2}
Take $\Theta=[0,1]$ and let $\Pi_0$ be the Lebesgue measure. Take $\log J_0$ and $\log J_1$ two normalized Lipschitz continuous Jacobians, where $J_0,J_1 : \{1,2,...,q\} ^\mathbb{N} \to \mathbb{R}_+,$ and consider the family of Lipschitz continuous
potentials
$$
f_\theta =  \log J_\theta :=\log (\theta  J_1 + (1-\theta)  J_0), \qquad \theta\in [0,1].
$$
For each $\theta\in [0,1]$ let $\mu_\theta$ be the unique Gibbs measure associated to the Lipschitz continuous potential $f_\theta$ (see also section 6 in  \cite{FLL} for a related work).
Assume further that the observed probability measure  associated to the sampling is $\nu=\mu_{\theta_0}$
 for some  $\theta_0\in [0,1]$.
 %
The probability measure  $\Pi_0$ describes our ignorance of the exact value $\theta_0$ among all possible choices $\theta\in [0,1]$.
For each $n \in \mathbb{N}$  consider a continuous  \emph{loss function}
$
\ell_n : \Theta    \times \Omega \times       Y \to \mathbb{R}
$
expressed as
$$\ell_n((a,b), x, y) =-
\sum_{j=0}^{n-1} \log J_{\theta} (\sigma^j (x)) +  \sum_{j=0}^{n-1} \log J_{\theta_0}(\sigma^j (y)) + \, \theta \log \theta -\theta \log \theta_0\,.$$
Similar expressions are often referred as cross-entropy loss functions.
By compactness of the parameter space $\Theta$ we conclude that the third and fourth expressions above are uniformly bounded, hence $(\ell_n)_{n\geqslant1}$ forms an almost-additive family on the $y$-variable, hence it fits in the
context of Theorem~\ref{thm:main2}.
%
%
In particular we conclude that the \emph{a posteriori} measures $\Pi_n(\cdot\mid y)$ converge to the probability measure 
$\mu_{\theta_0}$ as $n$ tends to infinity, for $\mu_{\theta_0}$-almost every $y$.
Alternatively, consider the continuous  \emph{loss function} $
\ell_n : \Theta    \times \Omega \times       Y \to \mathbb{R}
$
given by
$$
\ell_n((a,b), x, y) = - \sum_{j=0}^{n-1} \log J_{\theta} (\sigma^j (x))  +  \sum_{j=0}^{n-1} \log J_{\theta_0}(\sigma^j (y)) - \|\, \theta - \theta_0\,\|^2.
$$
 The minimization of $- \ell_n$ corresponds, in rough terms, to what is known in statistics as the minimization of the mean squared error  on the set of parameters. As the previous loss function is also almost-additive on the $y$-variable, Theorem~\ref{thm:main2} ensures that the corresponding \emph{a posteriori} measures $\Pi_n(\cdot\mid y)$ converge exponentially fast to the sampling probability measure  $\mu_{\theta_0}$ as $n$ tends to infinity, $\mu_{\theta_0}$-almost everywhere
(we refer the reader to \cite{Nobel1} where the methods which were developed there can provide an alternative argument leading to the same conclusion).
\end{example}

\begin{example} \label{ex03}
Let $\sigma:\{1,2\}^{\mathbb N} \to \{1,2\}^{\mathbb N}$ be the full shift and for each $\theta=(\theta_1, \theta_2)$ in the parameter space $\Theta:=[-\varepsilon, \varepsilon]^2$ let $\mu_\theta$
be a continuous family of Bernoulli measures. These are equilibrium states for a continuous family of potentials.
Consider also the locally constant linear cocycle $A_\theta: \Omega \to SL(2,\mathbb R)$
given by
$$
A_\theta\mid_{[i]}=
\begin{pmatrix}
\begin{array}{cc}
2 & 1  \\ 1 & 1
\end{array}
\end{pmatrix}
\,\cdot\,
\begin{pmatrix}
\begin{array}{cc}
\cos \theta_i & -\sin \theta_i  \\ \sin \theta_i & \cos \theta_i
\end{array}
\end{pmatrix},
\quad\text{for every $i=1, 2$. }
$$
Given $n\geqslant 1$ and $(x_1, \dots, x_n) \in \{1,2\}^n$, take the matrix product
$$A_\theta^{(n)}(x_1, \dots, x_n):= A_{\theta_{x_n}} \dots A_{\theta_{x_2}} A_{\theta_{x_1}}.
$$
The limit
$
\lambda_{\theta,i} := \lim_{n\to\infty} \frac1n \log \| A_\theta^{(n)}(x) \, v_i \|
$
is the largest Lyapunov exponent along the orbit of $x$, it is well defined for $\nu$-almost every $x$
and independs on the vector  $v_i\in E^i_{\theta,x} \setminus\{0\},$ $(i=\pm)$ (cf. Subsection~\ref{subsec:Lyapexp} for more details).
Somewhat dual to the context of joint spectral radius \cite{Bochi}, the problem here is the selection of a certain Gibbs measure from the information
on the norm of the products of matrices, along orbits of typical points. 
More precisely,
%
%
take the loss function $\ell_n(\theta, x,y) = -\log \|A^{(n)}_{\theta}(x)\|$ and notice that, for $\nu$-almost every $y\in Y$ and every $\theta\in \Theta$,
\begin{align*}
\int_\Omega e^{\varphi_{n+m}(\theta, x,y)} \, d\mu_\theta(x)
	& = \int_\Omega \|A^{(n+m)}_{\theta}(x)\| \, d\mu_\theta(x)
	=   \sum_{C_{n+m}(z)} \|A^{(n+m)}_{\theta}(z)\| \, \mu_\theta(C_{n+m}(z)) \\
	& \leqslant \sum_{C_n(z)} \|A^{(n)}_{\theta}(z)\|  \|A^{(m)}_{\theta}(\sigma^n(z))\| \, \mu_\theta(C_{n}(z)) \, \mu_\theta(C_{m}(\sigma^n(z))) \\
	& \leqslant \int_\Omega e^{\varphi_{n}(\theta, x,y)} \, d\mu_\theta(x) \cdot \int_\Omega e^{\varphi_{m}(\theta, x,y)} \, d\mu_\theta(x)
\end{align*}
for every $m,n\geqslant 1$, where we used that $\mu_\theta$ is a $\sigma$-invariant Bernoulli measure. In particular, Fekete's lemma implies that the following limit exists
\begin{align*}
\lim_{n\to\infty} \frac1n \log \int_\Omega e^{\varphi_n(\theta, x,y)} \, d\mu_\theta(x)
	& = \inf_{n\geqslant 1} \frac1n \log \int_\Omega \|A^{(n)}_{\theta}(x)\| \, d\mu_\theta(x).
\end{align*}
exists and independs on $y$. As the right hand-side above is the infimum of continuous functions on the parameter $\theta$, the limit function $\Theta\ni\theta\mapsto \Gamma^y(\theta)$ is upper semicontinuous.
We remark that $\theta_0=(0,0)$ is the unique parameter for which the Lyapunov exponent is the largest possible (see Lemma~\ref{le:Furstenberg}). Hence,
as assumptions (A1) and (A2) are satisfied, Theorem~\ref{thm:main2} implies that
\begin{equation*}
\Pi_n (E \mid y)\,=\, \frac{ \int_E  \int \|A^{(n)}_{\theta}(x)\| d\mu_\theta(x) \,d \Pi_0 (\theta) }{  \int_\Theta
\int \|A^{(n)}_{\theta}(x)\| d\mu_\theta(x) \,d \Pi_0 (\theta)}
	\longrightarrow
	\begin{cases}
	1, \text{if} \; (0,0) \in E \\
	0, \text{otherwise}
	\end{cases}
\end{equation*}
for every measurable subset $E\subset \Theta$, In other words, the a posteriori measures converge to the Dirac measure
$\delta_{(0,0)}$. In particular, one has posterior consistency in the problem of determining the measure with largest Lyapunov exponent. 

Alternatively, taking the loss function $\ell_n(\theta, x,y) =-\varphi_n(\theta, x,y) = -\log \|A^{(n)}_{\theta}(y)\|$, note that
the \emph{a posteriori} measures are given by
\begin{equation*}
\Pi_n (E \mid y)\,=\, \frac{ \int_E  \|A^{(n)}_{\theta}(y)\| \,d \Pi_0 (\theta) }{  \int_\Theta  \|A^{(n)}_{\theta}(y)\| \,d \Pi_0 (\theta)}
\end{equation*}
and that, by the Oseledets theorem and the sub-additive ergodic theorem, the limit
$$
\lim_{n\to\infty} \frac1n \log \int_\Omega e^{\varphi_n(\theta, x,y)} \, d\mu_\theta(x)
	= \lim_{n\to\infty} \frac1n \log \|A^{(n)}_{\theta}(y)\|
	=  \inf_{n\geqslant 1} \frac1n \int \log \|A^{(n)}_{\theta}(\cdot)\| \,d\nu
$$
for $\nu$-almost every $y\in Y$. The map
$
\Theta\ni \theta\mapsto \Gamma^y(\theta) :=  \inf_{n\geqslant 1} \frac1n \int \log \|A^{(n)}_{\theta}(\cdot)\| \,d\nu
$
is upper semicontinuous because it is the infimum of continuous maps. In particular, Theorem~\ref{thm:main2} implies once
more that
for $\nu$-almost every $y\in Y$
\begin{equation*}
\lim_{n\to\infty}\Pi_n (\cdot \mid y)\,=\, \lim_{n\to\infty} \frac{ \int_\cdot  \|A^{(n)}_{\theta}(y)\| \,d \Pi_0 (\theta) }{  \int_\Theta  \|A^{(n)}_{\theta}(y)\| \,d \Pi_0 (\theta)} = \delta_{(0,0)}
\end{equation*}
\end{example}

\begin{example} \label{ex04}
In the context of Example~\ref{ex03}, noticing that all matrices are in $SL(2,\mathbb R)$ it makes sense to consider alternatively the loss function
$
\ell_n(\theta, x,y) = -\log \varphi_n(\theta,x,y)=- \log \log \|A^{(n)}_{\theta}(y)\|,
$
and to observe that $\varphi_n(\theta,x,y)$ is almost-additive, meaning it satisfies (H1)-(H2) with a constant K uniform on
$\theta$. The loss functions
induce the \emph{a posteriori} measures
\begin{equation} \label{frtnew2Lyap}
\Pi_n (E \mid y)\,=\, \frac{ \int_E   \int \log \|A^{(n)}_{\theta}(x)\| d\mu_\theta(x)\, d \Pi_0 (\theta)}{ \int_\Theta
\int \log \|A^{(n)}_{\theta}(x)\| d\mu_\theta(x)\, d \Pi_0 (\theta)}.
\end{equation}
A simple computation involving Fekete's lemma guarantees that, for each $\theta\in \Theta$, the annealed Lyapunov exponent
$$
\lambda(\theta):=\lim_{n\to\infty} \frac1n \int \log \|A^{(n)}_{\theta}(x)\| d\mu_\theta(x) =\inf_{n\geqslant 1} \frac1n \int \log \|A^{(n)}_{\theta}(x)\| d\mu_\theta(x) \geqslant 0
$$
does exist. Theorem~\ref{thm:main3} implies that the
\emph{a posteriori} measures ~\eqref{frtnew2Lyap} converge and
\begin{equation*}
\lim_{n\to\infty} \Pi_n (E \mid y)\,=\, \frac{\int_E \lambda(\theta) \, d\Pi_0(\theta)}{\int_\Omega \lambda(\theta) \, d\Pi_0(\theta)}
\end{equation*}
for every measurable subset $E\subset \Theta$. In particular the limit measure is absolutely continuous with respect to
the \emph{a priori} measure $\Pi_0$ and with density given by the normalized Lyapunov exponent function.
Moreover, the continuous dependence of the Lyapunov exponents with respect to the parameter $\theta$ implies
the exponential large deviations estimates in Theorem~\ref{thm:main3}.
\end{example}

\color{black}

\section{ Preliminaries}\label{se:prelim}

\subsection{Relative entropy}\label{subsecrelent}

Let us recall some relevant concepts of entropy in the context of shifts.
Given $x=  (x_1,x_2,...,x_k ,...)\in \Omega$  and $n\geqslant 1$,  recall
$$C_n (x)=\{ y\in \Omega \,|\,y_j=x_j, j=1,2,..,n\}$$ the \emph{$n$-cylinder} in $\Omega$ that contains the point $x$. 
The concept of relative entropy will play a key role in the analysis.
%
Let $\phi \colon \Omega\to\mathbb R$ be a Lipschitz continuous potential and let $\mu_\phi$ be its unique Gibbs measure, thus ergodic. Following \cite[Section~3]{Cha1}, given an ergodic probability measure  $\mu \in \mathcal M_\sigma(\Omega)$ the limit
\begin{align} \label{Chacha}
h(\mu \mid \mu_\phi):=\lim_{n \to \infty} \frac{ 1 }{ n}  \log \Big(\frac{ \mu (   C_n (x))}{\mu_\phi  (   C_n (x) )} \Big)
\end{align}
exists and is non-negative for $\mu$-almost every $x=(x_1,x_2,...,x_n,..)\in \Omega$, and it is called the
\emph{relative entropy} of $\mu$ with respect to $\mu_\phi$. Notice that any two distinct ergodic probability measures are mutually singular,
hence no Radon-Nykodym derivative is well defined. 
In ~\eqref{Chacha}, a sequence of nested cylinder sets is used as an alternative to
compute relative entropy when  
Radon-Nykodym derivatives are not well defined (see \cite{Cha1} for more details). Moreover,
\begin{equation}\label{eq:computable}
h(\mu \mid \mu_\phi)
	= P_{\text{top}}(\sigma,\phi) - \int \phi \, d \mu - h(\mu)
\end{equation}
and, by the variational principle and uniqueness of equilibrium states for Lipschitz continuous potentials, $h(\mu \mid \mu_\phi)=0$ if and only if $\mu=\mu_\phi$
(cf. Subsection 3.2 in \cite{Cha1}). Furthermore, if $\mu=\mu_\psi$ is a Gibbs measure then $h(\mu \mid \mu_\phi)=0$
if and only if $\phi$ and $\psi$ are cohomologous, i.e, if there exists a Lipschitz continuous function $u: \Omega\to\mathbb R$
so that $\phi=\psi + u\circ \sigma - u$. 
The relative entropy is also known as the Kullback-Leibler divergence.
For proofs of general results on the topic in the context of shifts 
we refer the reader to \cite{Cha1} and \cite{LM3} which deal with finite and compact alphabets, respectively. 
We refer the reader to \cite{Gira} for an application of Kullback-Leibler divergence in statistics. 

\begin{remark}
In the special case that $(\mu_\theta)_{\theta\in \Theta}$ is a parameterized family of Gibbs measures associated to normalized potentials then for $\mu_{\theta}$-almost every $x=(x_1,x_2,...,x_n,..)\in \Omega$ we have
$$
\frac{\mu_\theta\,(C_n(x))}{\mu_{\theta_0} \,( C_n(x)) }\,\sim\, e^{ -n\, h(\mu_{\theta_0} \mid \mu_\theta)}\,\,\rightarrow \,\,0,\,
$$
as $n \to \infty$, whenever $f_\theta$ and $f_{\theta_0}$ are not cohomologous. Furthermore, as the pressure function is zero in this context the relative entropy $h(\mu_{\theta_0} \mid \mu_\theta)$ can be written as
\begin{equation} \label{chazo}
h(\mu_{\theta_0} \mid \mu_\theta) = - h (\mu_{\theta_0}) -
\int \log J_\theta \, d \mu_{\theta_0}.
\end{equation}
\end{remark}

Expression ~\eqref{eq:computable} allows to obtain uniform estimates on the relative entropy of nearby invariant measures. More precisely:

\begin{lemma} \label{acho}
Let $\phi : \Omega\to \mathbb R$ be a Lipschitz continuous potential and let $\mu_\phi$ be its unique Gibbs measure. Then, for any small $\varepsilon>0$ there exists $\delta>0$ such that
\begin{equation*} \label{jad}
\inf_{\mu \in\mathcal{M}_\sigma  (\Omega) }  \Big\{h(\mu \mid \mu_\phi) \,\colon\, D_\Omega (\mu, \mu_\phi) >\, \delta \,\Big\}> \varepsilon.
\end{equation*}
\end{lemma}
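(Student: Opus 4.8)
The plan is to argue by contradiction, exploiting the explicit formula~\eqref{eq:computable} for the relative entropy together with compactness of $\mathcal M_\sigma(\Omega)$ and upper semicontinuity of the entropy map. First I would suppose the statement fails: there is some $\varepsilon_0>0$ such that for every $\delta>0$ one can find an invariant measure $\mu_\delta\in\mathcal M_\sigma(\Omega)$ with $D_\Omega(\mu_\delta,\mu_\phi)>\delta$ and $h(\mu_\delta\mid\mu_\phi)\leqslant\varepsilon_0$. Taking $\delta=\delta_j\to 0$ along a sequence produces measures $\mu_j$ with $D_\Omega(\mu_j,\mu_\phi)>\delta_j$ but $h(\mu_j\mid\mu_\phi)\leqslant\varepsilon_0$. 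Wait—this alone does not force $\mu_j$ away from $\mu_\phi$, so I must be careful: the negation gives, for each fixed $\delta>0$, the failure of the infimum bound, i.e. $\inf\{h(\mu\mid\mu_\phi)\colon D_\Omega(\mu,\mu_\phi)>\delta\}\leqslant\varepsilon_0$. So in fact I should fix one $\delta>0$ for which the conclusion fails for every $\varepsilon$, meaning the infimum over the closed-from-outside region $\{D_\Omega(\mu,\mu_\phi)\geqslant\delta\}$ (which contains the set $\{D_\Omega>\delta/2\}$ say) equals $0$; then pick $\mu_j$ in that region with $h(\mu_j\mid\mu_\phi)\to 0$.

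Next I would use compactness of $\mathcal M_\sigma(\Omega)$ in the weak$^*$ topology to extract a subsequence $\mu_{j_k}\to\mu_\infty$ with $\mu_\infty\in\mathcal M_\sigma(\Omega)$, and since the region $\{D_\Omega(\cdot,\mu_\phi)\geqslant\delta/2\}$ is closed, $D_\Omega(\mu_\infty,\mu_\phi)\geqslant\delta/2>0$, so in particular $\mu_\infty\neq\mu_\phi$. The core step is then to pass to the limit in $h(\mu_j\mid\mu_\phi)=P_{\text{top}}(\sigma,\phi)-\int\phi\,d\mu_j-h(\mu_j)$: the term $\int\phi\,d\mu_j\to\int\phi\,d\mu_\infty$ by weak$^*$ convergence and continuity of $\phi$, while $\limsup_j h(\mu_j)\leqslant h(\mu_\infty)$ by upper semicontinuity of the metric entropy on the expansive system $\sigma$. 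Hence
\[
h(\mu_\infty\mid\mu_\phi) = P_{\text{top}}(\sigma,\phi)-\int\phi\,d\mu_\infty-h(\mu_\infty)
	\leqslant \liminf_j\Big(P_{\text{top}}(\sigma,\phi)-\int\phi\,d\mu_j-h(\mu_j)\Big) = \liminf_j h(\mu_j\mid\mu_\phi)=0.
\]
Since $h(\mu_\infty\mid\mu_\phi)\geqslant 0$ always, we get $h(\mu_\infty\mid\mu_\phi)=0$, which by the uniqueness of equilibrium states for Lipschitz potentials (quoted after~\eqref{eq:computable}) forces $\mu_\infty=\mu_\phi$, contradicting $D_\Omega(\mu_\infty,\mu_\phi)\geqslant\delta/2$.

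The only delicate point is the upper semicontinuity of $\mu\mapsto h(\mu)$ used in the limit: this is where the structure of $\sigma$ as a subshift of finite type (hence expansive, with the entropy map upper semicontinuous—see e.g.~\cite{PP}) is essential, and I would invoke it as a standard fact rather than reprove it. Everything else—compactness of $\mathcal M_\sigma(\Omega)$, continuity of $\mu\mapsto\int\phi\,d\mu$, and the identity~\eqref{eq:computable}—has already been recorded in the preliminaries, so the argument is short once the contradiction setup is phrased correctly. I expect the main obstacle to be purely bookkeeping: stating the negation of the quantified inequality in the right way so that the limiting measure genuinely stays bounded away from $\mu_\phi$, which the formulation above handles by fixing $\delta$ first and letting $\varepsilon\to 0$.
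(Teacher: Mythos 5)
Your argument is correct and rests on exactly the same ingredients as the paper's own proof — the identity \eqref{eq:computable}, continuity of $\mu\mapsto\int\phi\,d\mu$, upper semicontinuity of the entropy map on $\mathcal M_\sigma(\Omega)$, and uniqueness of the equilibrium state — merely repackaged as a sequential-compactness contradiction argument in place of the paper's direct two-line assertion. (One small slip: $\{D_\Omega(\mu,\mu_\phi)\geqslant\delta\}$ does not contain $\{D_\Omega(\mu,\mu_\phi)>\delta/2\}$, the inclusion goes the other way; this is harmless, since all you need is that the region you place the sequence in is weak$^*$-closed and bounded away from $\mu_\phi$.)
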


\begin{proof}
Fix $\varepsilon>0$. By the continuity of the map $\mu\mapsto \int \phi\, d\mu$, upper-semicontinuity of the entropy map
$\mu\mapsto h(\mu)$ and uniqueness of the equilibrium state, there exists $\delta>0$ so that any invariant probability measure $\mu$ so that $D_\Omega(\mu,\mu_\phi)>\delta$ satisfies
$
h(\mu) + \int \phi \, d \mu  < P_{\text{top}}(\sigma,\phi)  - \varepsilon.
$
This, together with \eqref{eq:computable} proves the lemma.
\end{proof}

\subsection{Non-additive thermodynamic formalism}\label{subsec:almostadditive}
As mentioned before, we are mostly interested in non-additive loss functions which keep some almost additivity condition. Let us recall some of the basic notions associated to the non-additive thermodynamic formalism.

\subsubsection{Basic notions}
There are several notions of non-additive sequences which appear naturally in the description of thermodynamic objects. Let us recall some of these notions.

\begin{definition}\label{def.almost.additive}
A sequence $\Psi := \{\psi_n\}_{n\geqslant 1}$ of continuous functions $\psi_n:\Omega \to \mathbb{R}$
is called:
\begin{enumerate}
\item {\it almost additive} if there exists $C>0$ such that
\begin{equation*}
\psi_n   +\psi_m \circ \sigma^n - C \leqslant \psi_{m+n} \leqslant \psi_n + \psi_m \circ \sigma^n + C, \quad \forall m,n \geqslant 1;
\end{equation*}
\item \emph{asymptotically additive} if for any
$\xi>0$ there is a continuous function $\psi_\xi$ so that
\begin{equation*}
\limsup_{n\to\infty} \frac1n \left\| \psi_n - S_n \psi_\xi
\right\| <\xi;
\end{equation*}
\item \emph{sub-additive} if
$$
\psi_{m+n}\leqslant \psi_{m}+\psi_{n}\circ \sigma^m, \quad \forall m,n \geqslant 1.
$$
\end{enumerate}
\end{definition}

The convergence in the case of constant functions, ie sub-additive sequences is given by the following well known lemma.

\begin{lemma}[Fekete's lemma]\label{le:Fekete}
Let $(a_n)_{n\geqslant 1}$ be a sequence of real numbers so that $a_{n+m} \leqslant a_{n}+ a_{m}$ for every $n,m\geqslant 1$. Then the sequence $(a_n)_{n\geqslant 1}$ is convergent to $\inf_{n\geqslant 1} \frac{a_n}n$.
\end{lemma}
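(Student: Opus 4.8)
The statement should be read in the usual way: with $L := \inf_{n\geqslant 1} a_n/n \in [-\infty,+\infty)$ (finite upper bound since $L\leqslant a_1$), the claim is that $a_n/n \to L$ as $n\to\infty$. Since $a_n/n \geqslant L$ for every $n$ by the very definition of the infimum, one has $\liminf_{n\to\infty} a_n/n \geqslant L$ for free, so the whole content of the lemma is the reverse bound $\limsup_{n\to\infty} a_n/n \leqslant L$.

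To prove this, I would fix an arbitrary real number $M>L$ and use the definition of the infimum to pick $k\geqslant 1$ with $a_k/k < M$. Given $n\geqslant 1$, write the Euclidean division $n = q_n k + r_n$, choosing $q_n\geqslant 0$ and $r_n\in\{1,2,\dots,k\}$ (if the ordinary remainder is $0$, record it instead as $r_n=k$, decreasing $q_n$ by one, so that $r_n$ always indexes a genuine term of the sequence, which starts at index $1$). Iterating the subadditivity hypothesis $a_{m+n}\leqslant a_m+a_n$ gives $a_{q_n k}\leqslant q_n a_k$ and hence $a_n \leqslant q_n a_k + a_{r_n}$. Dividing by $n$,
$$
\frac{a_n}{n} \;\leqslant\; \frac{q_n k}{n}\cdot\frac{a_k}{k} \;+\; \frac{a_{r_n}}{n}.
$$
As $n\to\infty$ we have $q_n\to\infty$, so $q_n k/n \to 1$; meanwhile $a_{r_n}$ takes values in the finite (hence bounded) set $\{a_1,\dots,a_k\}$, so $a_{r_n}/n\to 0$. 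Taking $\limsup$ yields $\limsup_{n\to\infty} a_n/n \leqslant a_k/k < M$.

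Since $M>L$ was arbitrary, $\limsup_{n\to\infty} a_n/n \leqslant L$, which combined with the trivial $\liminf$ bound gives $\lim_{n\to\infty} a_n/n = L$. The degenerate case $L=-\infty$ is handled by exactly the same computation, now run with $M$ an arbitrary real number: it shows $\limsup_{n\to\infty} a_n/n \leqslant M$ for all $M\in\mathbb{R}$, i.e. $a_n/n\to-\infty$. There is no genuine obstacle in this argument; the only point needing a little care is the bookkeeping of the remainder so that $a_{r_n}$ always refers to an actual term of the sequence, together with the elementary observation that the finitely many values $a_1,\dots,a_k$ are bounded so that $a_{r_n}/n$ vanishes in the limit.
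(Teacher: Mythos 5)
Your proof is correct and complete: the reduction to showing $\limsup_n a_n/n \leqslant a_k/k$ for a well-chosen $k$, via Euclidean division and iterated subadditivity, is the classical argument for Fekete's lemma, and your bookkeeping of the remainder (so that $a_{r_n}$ is always a genuine term) and of the case $\inf_n a_n/n=-\infty$ is careful and accurate. The paper itself offers no proof — it quotes the lemma as well known — so there is nothing to contrast with; your write-up is exactly the standard argument the authors are implicitly invoking.
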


In order to recall the variational principle and equilibrium states for sequences of dynamical systems we need to obtain
an almost sure convergence. Given a probability measure $\rho \in \mathcal{M} (\Omega)$, Kingman's sub-additive ergodic theorem
ensures that any almost additive or sub-additive sequence $\Psi := \{\psi_n\}_{n\geqslant 1}$ of continuous functions is such that
\begin{equation}
 \label{rty}
 {\mathcal F} (\rho, \Psi) = \lim_{n \to \infty} \frac{1}{n} \int \psi_n \, d \rho\,.
 \end{equation}

 \begin{definition}
 \emph{We denote by  $P_{\text{top}} (\sigma,\Phi)$ the  {\it pressure} of the almost additive family $\Phi$, associated to the family $\varphi_n$, where
$$P_{\text{top}} (\sigma,\Phi)
	=  \sup_{\rho \in \mathcal M_\sigma(\Omega)} \Big\{\, h(\rho) \,+\,{\mathcal F} (\rho, \Phi) \,\Big\}. $$
A probability measure $\mu=\mu_\Phi\in \mathcal M_\sigma(\Omega)$ is called a  {\it Gibbs measure} for the  almost additive   family $\Phi$,
if it attains the supremum.}
\end{definition}

The previous topological pressure for non-additive sequences can also be defined, in the spirit of information theory, as the maximal topological complexity of the dynamics with respect to such sequences of observables (cf. \cite{Bar1}).
  The unique Gibbs measure associated to the family $\Phi=(\varphi_n)_{n\geqslant 1}$, $\varphi_n =\sum_{j=0}^{n-1} \log J_{\theta_0}\circ \sigma^j,$  $n \in \mathbb{N}$, is $\mu_{\theta_0}$. Moreover, in this case $P_{\text{top}} (\sigma,\Phi)=0.$ For the family $\Phi:=\{\varphi_n\}$ the claim is under the domain of the classical Thermodynamic Formalism as described before by expression (\ref{mw}).
In this case
\begin{equation} \label{mw44} P_{\text{top}} (\sigma,\Phi) = \sup_{\mu \in \mathcal M_\sigma(\Omega)} \{\, h(\mu) \,+\, \int \log J_{\theta_0} d \mu \,\}= 0. \end{equation}

\begin{remark}
In \cite{Cuneo}, the author proved that any sequence $\Psi$ of almost additive or asymptotically additive potentials is equivalent to standard additive potentials: there exists a continuous potential $\varphi$ with the same topological pressure, equilibrium states, variational principle, weak Gibbs measures, level sets (and irregular set) for the Lyapunov exponent and large deviations properties. Yet, it is still unknown wether any sequence of Lipschitz continuous potentials has a Lipschitz continuous additive representative.
\end{remark}

\subsubsection{Almost-additive potentials related to entropy}\label{aapre}

The next proposition says that Gibbs measures determine in a natural way some sequences of almost additive potentials.

\begin{lemma}\label{le:ln-mu}
Given $\theta \in \Theta$, the family $\psi_{n,1}^\theta (y):= \log (\mu_\theta (\,C_{n} (y)\, )$, $n \in \mathbb{N},$ is almost additive.
\end{lemma}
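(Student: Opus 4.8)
The goal is to show that the sequence $\Psi^\theta = \{\psi_{n,1}^\theta\}_{n\ge 1}$ defined by $\psi_{n,1}^\theta(y) = \log \mu_\theta(C_n(y))$ is almost additive, that is, there is a constant $C>0$ (possibly depending on $\theta$) so that
$$
\psi_{n,1}^\theta(y) + \psi_{m,1}^\theta(\sigma^n(y)) - C \le \psi_{n+m,1}^\theta(y) \le \psi_{n,1}^\theta(y) + \psi_{m,1}^\theta(\sigma^n(y)) + C.
$$
The plan is to exploit the Gibbs property ~\eqref{eq:Gibbs} for $\mu_\theta$, which with a uniform constant $K=K_\theta>1$ reads
$$
\frac{1}{K} \le \frac{\mu_\theta(C_n(y))}{e^{-nP_\theta + S_n f_\theta(y)}} \le K, \qquad \forall n\ge 1,\ \forall y\in\Omega,
$$
so that, after taking logarithms, $\psi_{n,1}^\theta(y)$ differs from $-nP_\theta + S_n f_\theta(y)$ by at most $\log K$ in absolute value. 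In particular, since $f_\theta$ is normalized, $P_\theta = 0$, but this is not even needed — the point is only that $\psi_{n,1}^\theta$ is, up to a uniformly bounded error, the Birkhoff sum $S_n f_\theta$ of a fixed continuous potential, and Birkhoff sums are exactly additive: $S_{n+m}f_\theta(y) = S_n f_\theta(y) + S_m f_\theta(\sigma^n(y))$ and likewise $-(n+m)P_\theta = -nP_\theta - mP_\theta$.

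Carrying this out: first, apply the Gibbs inequality at indices $n$, $m$ (at the point $\sigma^n(y)$), and $n+m$ (at the point $y$), to obtain
$$
\big|\psi_{n,1}^\theta(y) - (-nP_\theta + S_n f_\theta(y))\big| \le \log K,
$$
and the two analogous bounds for $\psi_{m,1}^\theta(\sigma^n(y))$ and $\psi_{n+m,1}^\theta(y)$. Second, add the additive identity $S_{n+m} f_\theta(y) - n P_\theta - m P_\theta = (S_n f_\theta(y) - nP_\theta) + (S_m f_\theta(\sigma^n(y)) - mP_\theta)$ and substitute. The three error terms of size at most $\log K$ accumulate, giving the almost-additivity inequalities with $C = 3\log K$ (any constant $\ge 3\log K$ works). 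One should also note $\psi_{n,1}^\theta$ is continuous: on each fixed $n$-cylinder it is constant, equal to $\log\mu_\theta(C_n(y))$, hence locally constant and in particular continuous.

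There is essentially no obstacle here — the statement is a direct and standard consequence of the Gibbs property, which has already been set up in the excerpt (including the remark after ~\eqref{eq:Gibbs} that the constant $K$ can be taken uniform over $n$). The only mild point to be careful about is that $K$ may a priori depend on $\theta$, so the constant $C$ in the almost-additivity condition is $\theta$-dependent; this is consistent with the hypotheses (H1) later in the paper, where the constant is allowed to depend on $\theta$. If one wanted a constant uniform in $\theta$ one would invoke the compactness of $\Theta$ and the continuity $\theta\mapsto f_\theta$ in the Lipschitz norm, together with ~\eqref{372}, to bound $K_\theta$ uniformly — but for the lemma as stated this is unnecessary.
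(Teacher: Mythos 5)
Your proof is correct and is essentially the paper's own argument: the paper likewise applies the Gibbs property (with $P_\theta=0$ by normalization) at the indices $n$, $m$ and $n+m$, uses the exact additivity $S_{n+m}f_\theta(y)=S_nf_\theta(y)+S_mf_\theta(\sigma^n(y))$ to get the quasi-multiplicative bounds $K_\theta^{-3}\,\mu_\theta(C_n(y))\,\mu_\theta(C_m(\sigma^n(y)))\leqslant \mu_\theta(C_{n+m}(y))\leqslant K_\theta^{3}\,\mu_\theta(C_n(y))\,\mu_\theta(C_m(\sigma^n(y)))$, and takes logarithms, arriving at the same $\theta$-dependent constant $3\log K_\theta$.
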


\begin{proof}
Recall that all potentials $f_\theta$ are normalized, thus each $\mu_\theta$ satisfies the Gibbs property ~\eqref{eq:Gibbs}
with $P_\theta=0$. Thus, for $\theta\in \Theta$ there exists $K_\theta>0$ such that for all $n\geqslant 1$ and $x\in \Omega$
\begin{align*}
\mu_\theta (C_{m+n}(x))
	& \leqslant K_\theta^{3} \; \mu_\theta  (C_n(x)) \,
\mu_\theta (\sigma^n  (C_{m+n}(x))\,) \\
	& = K_\theta^{3} \; \mu_\theta  (C_n(x)) \,
\mu_\theta (C_{m}(\sigma^n(x))\,).
\end{align*}
Similarly, $\mu_\theta (C_{m+n}(x)) \geqslant K_\theta^{-3} \,\mu_\theta  (C_n(x)) \, \mu_\theta (C_{m}(\sigma^n(x))\,)$ for all $n\geqslant1$.
Therefore, the family
$\psi_{n,1}^\theta ( y)= \log (\mu_\theta (\,C_n (y)\, )$ satisfies
$$ \psi_{n,1}^\theta + \psi_{m,1}^\theta \circ \sigma^n - 3 \, \log K_\theta \leqslant   \psi_{(n+m),1}^\theta \leqslant \psi_{n,1}^\theta + \psi_{m,1}^\theta \circ \sigma^n + 3 \, \log K_\theta
$$
for all $m, n\geqslant 1$, hence it is almost-additive.
\end{proof}









Note that the natural family
$$y \to -\,\log\,\int_E \,  \frac{\mu_\theta (\,C_n (y)\, )}{ \mu_{\theta_0} (\,C_n (y)\, ) } \, d \Pi_0 (\theta),$$
$n \in \mathbb{N},$ which seems at first useful, may \emph{not} be almost additive as one first evaluate fluctuations
on the different ways the measures see cylinders and only afterwards take its logarithm.
We consider alternatively the sequence of potentials given below. 

\begin{lemma} \label{kkk1}
For any fixed $y\in Y$ and any Borel set $E\subset \Theta$,  the family
$$ \psi_{n}(y)= \psi_{n}^{E} (y)  = -\,\,\int  \mathbf 1_E (\theta)\, \log (\mu_\theta (\,C_n (y)\, ) \, d \Pi_0 (\theta),$$
$n \in \mathbb{N},$ is almost additive.
In particular, for each $\theta_0\in \Theta$ and $E\subset\Theta$, the family $\Psi^E := \{\Psi_n^E\}_n, $
\begin{equation}\label{kkk2}
y \to \Psi_{n}^{E} (y) := -\,\,\int_E\, \log \Big(\frac{\mu_\theta (\,C_n (y)\, )}{ \mu_{\theta_0} (\,C_n (y)\, ) }\Big) \, d \Pi_0 (\theta),
\end{equation}
is almost additive.
\end{lemma}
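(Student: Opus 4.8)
The plan is to reduce everything to the \emph{uniform} version of the Gibbs inequality already used in the proof of Lemma~\ref{le:ln-mu}, and then integrate in the parameter. First I would recall that, since each potential $f_\theta$ is normalized, the Gibbs property~\eqref{eq:Gibbs} holds with $P_\theta=0$, and that the continuity of $\Theta\ni\theta\mapsto f_\theta$ together with compactness of $\Theta$ allows one to choose a single constant $K>1$ valid for all $\theta\in\Theta$, as noted right after~\eqref{eq:Gibbs}. Writing $\psi_{n,1}^\theta(y)=\log\mu_\theta(C_n(y))$ as in Lemma~\ref{le:ln-mu}, the computation performed there then yields the uniform bound
$$
\bigl|\psi_{n,1}^\theta(y)+\psi_{m,1}^\theta(\sigma^n(y))-\psi_{(m+n),1}^\theta(y)\bigr|\leqslant 3\log K
\qquad\text{for all }\theta\in\Theta,\ y\in\Omega,\ m,n\geqslant 1 .
$$

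Next I would check that $\psi_n^E$ is a well-defined continuous function. For fixed $n$, the map $y\mapsto\psi_{n,1}^\theta(y)$ is constant on each of the finitely many $n$-cylinders of $\Omega$, while $\theta\mapsto\mu_\theta(C_n(y))$ is continuous (cylinders are clopen and $\theta\mapsto\mu_\theta$ is weak$^*$ continuous) and, for $n$ fixed, bounded away from $0$ and $\infty$ by the uniform Gibbs bounds. Hence $\theta\mapsto\mathbf 1_E(\theta)\,\psi_{n,1}^\theta(y)$ is bounded and measurable, the integral defining $\psi_n^E(y)=-\int \mathbf 1_E(\theta)\,\psi_{n,1}^\theta(y)\,d\Pi_0(\theta)$ makes sense, and $\psi_n^E$ is itself locally constant (it depends on $y$ only through $C_n(y)$), in particular continuous.

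Then I would integrate the displayed inequality over $E$ against $\Pi_0$. Since $\Pi_0$ is a probability measure, $\Pi_0(E)\leqslant 1$, and by linearity of the integral, using $\psi_n^E(y)=-\int \mathbf 1_E(\theta)\,\psi_{n,1}^\theta(y)\,d\Pi_0(\theta)$ and $\psi_m^E(\sigma^n(y))=-\int \mathbf 1_E(\theta)\,\psi_{m,1}^\theta(\sigma^n(y))\,d\Pi_0(\theta)$, one obtains
$$
\bigl|\psi_n^E(y)+\psi_m^E(\sigma^n(y))-\psi_{m+n}^E(y)\bigr|\leqslant 3\log K\cdot\Pi_0(E)\leqslant 3\log K
$$
for all $m,n\geqslant 1$; that is, $\{\psi_n^E\}_n$ is almost additive with constant $C=3\log K$. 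Finally, for $\Psi^E=\{\Psi_n^E\}_n$ I would use the identity $\Psi_n^E(y)=\psi_n^E(y)+\Pi_0(E)\,\psi_{n,1}^{\theta_0}(y)$ together with the elementary fact, immediate from the triangle inequality, that a finite linear combination $\alpha a_n+\beta b_n$ of almost additive sequences (with constants $C_1,C_2$) is again almost additive (with constant $|\alpha|C_1+|\beta|C_2$); since $\psi_{n,1}^{\theta_0}$ is almost additive by Lemma~\ref{le:ln-mu} with constant $3\log K$, this gives that $\Psi^E$ is almost additive with constant $3(1+\Pi_0(E))\log K\leqslant 6\log K$.

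There is essentially no serious obstacle here: the only subtle point is that one must carry the constant $K$ uniformly in $\theta$ \emph{before} integrating. Integrating first, as in the ``natural'' family $y\mapsto-\log\int_E \tfrac{\mu_\theta(C_n(y))}{\mu_{\theta_0}(C_n(y))}\,d\Pi_0(\theta)$ mentioned just above the statement, would fail, because there one takes the logarithm after integrating and thereby loses the sub-/super-multiplicativity coming from the Gibbs property. Keeping the logarithm inside the integral — as in the definitions of $\psi_n^E$ and $\Psi_n^E$ — means that one is integrating a quantity which is additive up to a uniform constant, which is exactly what makes almost additivity pass to the integral.
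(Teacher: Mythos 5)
Your argument is correct and follows essentially the same route as the paper: almost additivity of $\psi_{n,1}^\theta=\log\mu_\theta(C_n(\cdot))$ from Lemma~\ref{le:ln-mu} with the uniform Gibbs constant, integration in $\theta$ by linearity for $\psi_n^E$, and the decomposition of $\Psi_n^E$ as a linear combination of almost-additive sequences for the second assertion. The only difference is that you keep the factor $\Pi_0(E)$ explicit in the identity $\Psi_n^E=\psi_n^E+\Pi_0(E)\,\psi_{n,1}^{\theta_0}$ and track the constants, which is slightly more precise than the paper's statement but changes nothing in substance.
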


\begin{proof}
The first assertion is a direct consequence of the previous lemma and linearity of the integral.
For the second one just notice that 
$$
 -\,\,\int_E\, \log \Big(\frac{\mu_\theta (\,C_n (y)\, )}{ \mu_{\theta_0} (\,C_n (y)\, ) }\Big) \, d \Pi_0 (\theta)
 	= \psi_n(y) + \psi_{n,1}^{\theta_0}
$$
is the sum of two almost-additive sequences, hence almost additive.
\end{proof}

%
%

\subsubsection{Almost-additive potentials related to Lyapunov exponents}\label{subsec:Lyapexp}

Let $\sigma:\Omega \to \Omega$ be a subshift of finite type and for each $\theta=(\theta_1, \theta_2,\dots,\theta_p)\in [-\varepsilon, \varepsilon]^2$ consider the locally constant linear cocycle $A_\theta: \Omega \to SL(2,\mathbb R)$
given by
$$
A_\theta\mid_{[i]}=
\begin{pmatrix}
\begin{array}{cc}
2 & 1  \\ 1 & 1
\end{array}
\end{pmatrix}
\,\cdot\,
\begin{pmatrix}
\begin{array}{cc}
\cos \theta_i & -\sin \theta_i  \\ \sin \theta_i & \cos \theta_i
\end{array}
\end{pmatrix},
\quad\text{for every $i=1, 2$. }
$$
To each $n\geqslant 1$ and $(x_1, \dots, x_n) \in \{1,2\}^n$ one associates the product matrix
$$A_\theta^{(n)}(x_1, \dots, x_n):= A_{\theta_{x_n}} \dots A_{\theta_{x_2}} A_{\theta_{x_1}}.
$$
If $\varepsilon>0$ is chosen small the previous family of matrices preserve a constant cone field in $\mathbb R^2$, hence
have a dominated splitting. Furthermore, if $\mu\in \mathcal M_\sigma(\Omega)$ is ergodic the Oseledets theorem ensures that for $\mu$-almost every $x\in \Omega$ there exists a cocycle invariant splitting
$\mathbb R^2=E^+_{\theta,x} \oplus E^-_{\theta,x}$ so that the limit
$$
\lambda_{\theta,i} := \lim_{n\to\infty} \frac1n \log \| A_\theta^{(n)}(x) \, v_i \|
$$
exists and is independent of the vector  $v_i\in E^i_{\theta,x} \setminus\{0\},$ $(i=\pm)$. Actually, Oseledets theorem also ensures that the largest Lyapunov exponent can be obtained  by means of sub-additive sequences, as
$$
\lambda^+(A_\theta,\mu)=\lambda_{\theta,+}=\inf_{n\geqslant 1} \frac1n \log \|A^{(n)}_{\theta}(x)\|,
$$
for $\mu$-almost every $x$. Since all matrices preserve a cone field then for each $\theta\in[-\varepsilon,\varepsilon]^2$ the sequence
$(\log \|A^{(n)}_{\theta}(x)\|)_{n\geqslant 1}$ is known to be almost-additive on the $x$-variable (cf.~\cite{Feng}).
Most surprisingly, in this simple context the largest annealed Lyapunov exponent
$$
\lim_{n\to\infty} \frac1n \int \log \|A^{(n)}_{\theta}(x)\| \,d\nu(x)
$$
varies analytically with the parameter $\theta$
(cf. \cite{Ruelle}).
We will need the following localization result.

\begin{lemma}\label{le:Furstenberg}
$\lambda^+(A_{(0,0)},\nu) > \lambda^+(A_\theta,\nu)$ for every $\theta\in [-\varepsilon,\varepsilon]^2 \setminus\{(0,0)\}$
\end{lemma}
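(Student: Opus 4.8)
The plan is to show that the matrix $\begin{pmatrix}2 & 1\\ 1& 1\end{pmatrix}$, which is positive definite and symmetric, is the unique maximizer of the top Lyapunov exponent among the perturbed cocycles $A_\theta$, and that any rotation by a nonzero angle strictly decreases it. Write $B=\begin{pmatrix}2 & 1\\ 1& 1\end{pmatrix}$ and $R_t$ for the rotation by angle $t$, so that $A_\theta\mid_{[i]}=B\,R_{\theta_i}$. Since the cocycle $A_{(0,0)}$ is the constant cocycle equal to $B$, its Lyapunov exponent is $\log\|B\|=\log\rho(B)=\log\lambda_{\max}(B)$, where $\rho$ denotes the spectral radius; here we use that $B$ is symmetric positive definite so that its operator norm equals its largest eigenvalue and $\|B^n\|=\|B\|^n$. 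This is the candidate maximal value.

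First I would invoke a subadditivity/submultiplicativity argument to get the upper bound. For any $\theta$ and any $x$, each factor $A_{\theta_{x_j}}=B R_{\theta_{x_j}}$ has operator norm $\|B R_{\theta_{x_j}}\|=\|B\|$ because $R_{\theta_{x_j}}$ is orthogonal. Hence by submultiplicativity of the operator norm, $\|A_\theta^{(n)}(x)\|\le \|B\|^n$ for every $n$, and therefore
\begin{equation*}
\lambda^+(A_\theta,\nu)=\inf_{n\ge 1}\frac1n\int \log\|A_\theta^{(n)}(x)\|\,d\nu(x)\le \log\|B\|=\lambda^+(A_{(0,0)},\nu).
\end{equation*}
So the point $(0,0)$ is a maximizer; what remains is strictness for $\theta\ne(0,0)$.

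For the strict inequality, I would argue that equality in the submultiplicative bound forces a rigidity that fails as soon as some $\theta_i\ne0$. Observe $\|B v\|=\|B\|\,\|v\|$ only when $v$ lies in the top eigendirection $e_+$ of $B$. If $\|A_\theta^{(n)}(x)\|=\|B\|^n$ for a unit vector $v$, then reading the product from the right, $R_{\theta_{x_1}}v$ must be $\pm e_+$, then $B R_{\theta_{x_1}}v=\pm\|B\|e_+$, then $R_{\theta_{x_2}}(\pm e_+)$ must again be $\pm e_+$, and so on; this requires $R_{\theta_{x_j}}e_+\in\{\pm e_+\}$ for all the symbols appearing, i.e. each $\theta_i$ involved is a multiple of $\pi$. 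Since $\theta\in[-\varepsilon,\varepsilon]^2$ with $\varepsilon$ small, this forces $\theta_i=0$. Thus for $\theta\ne(0,0)$ there is no vector achieving growth rate $\log\|B\|$ along a positive-frequency set of orbits, and in fact one gets a uniform gap: there exists $\kappa=\kappa(\theta)<\|B\|$ and $N$ so that $\|A_\theta^{(N)}(z)\|\le\kappa^N$ for every word $z$ of length $N$ that contains the symbol $i$ with $\theta_i\ne 0$ — this uses compactness of the projective space and the fact that $R_{\theta_i}$ moves $e_+$ off its $\pm$-orbit by a definite angle, together with the cone-field domination which controls how the norm concentrates along $e_+$. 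Combining this with the ergodic theorem (the set of times $j$ with $x_j=i$ has positive $\nu$-frequency, since $\nu$ is a fully supported Bernoulli measure) yields $\lambda^+(A_\theta,\nu)<\log\|B\|$.

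The main obstacle is making the ``definite loss along bad symbols'' step quantitative and rigorous: norms of long matrix products are not simply products of the norms of blocks, so one must use the dominated splitting (the invariant cone field from Example~\ref{ex03}) to control the angle between $A_\theta^{(k)}(x)v$ and the expanding direction and thereby estimate $\|A_\theta^{(n)}(x)\|$ from above by a product over blocks, each of which is $\le\|B\|$ and is $\le\kappa<\|B\|$ whenever the block contains a symbol $i$ with $\theta_i\ne 0$. Once this block estimate is in place, the subadditive ergodic theorem together with the positive frequency of such symbols finishes the proof. An alternative, perhaps cleaner route for the strictness avoids quantitative estimates entirely: the largest annealed Lyapunov exponent depends analytically (in particular continuously and with a genuine second-order expansion) on $\theta$ by the cited result of Ruelle, and the computation above shows $(0,0)$ is a global maximum of this analytic function on $[-\varepsilon,\varepsilon]^2$; a direct second-variation computation at $\theta=(0,0)$ — differentiating $\log\|B R_{\theta_i}\cdots\|$ twice — shows the Hessian is negative definite, so $(0,0)$ is a strict local maximum, and combined with the global bound $\lambda^+(A_\theta,\nu)\le\log\|B\|=\lambda^+(A_{(0,0)},\nu)$ this upgrades to the strict global inequality claimed. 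I would present the norm/rigidity argument as the primary proof and mention the analyticity argument as a remark.
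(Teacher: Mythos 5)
Your upper-bound half is fine and coincides with the paper's first two observations: $\|BR_{\theta_i}\|=\|B\|$ for every $i$, so $\|A^{(n)}_\theta(x)\|\leqslant\|B\|^n$, while $\lambda^+(A_{(0,0)},\nu)=\log\|B\|$ because $B$ is symmetric. The gap is in the strictness step. The uniform block estimate you rely on --- existence of $N$ and $\kappa<\|B\|$ such that $\|A^{(N)}_\theta(z)\|\leqslant\kappa^N$ for \emph{every} length-$N$ word $z$ containing a symbol $i$ with $\theta_i\neq0$ --- is false. Take $\theta=(t,0)$ with $t\neq0$ and the word $z=(1,2,2,\dots,2)$: then
\begin{equation*}
A^{(N)}_\theta(z)=B^{N-1}\,(B R_{t})=B^{N}R_{t},
\qquad \|B^{N}R_{t}\|=\|B^{N}\|=\|B\|^{N},
\end{equation*}
so a block containing a bad symbol can expand at the full rate; more generally a single bad symbol costs at most a bounded multiplicative factor, never a factor $(\kappa/\|B\|)^N$. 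The genuine mechanism is that a bad symbol produces a definite loss only when the \emph{incoming direction} (the normalized image of the part of the product already applied) is not the rotated eigendirection $R_{-\theta_i}e_+$, so any proof along your lines must control the asymptotic distribution of these directions --- precisely the step you identify as ``the main obstacle'' and then assert rather than prove. Your fallback via Ruelle's analyticity also does not close the argument: a negative-definite Hessian at $(0,0)$ gives only a strict \emph{local} maximum, and combined with the global inequality $\lambda^+(A_\theta,\nu)\leqslant\log\|B\|$ it still leaves open that the value $\log\|B\|$ is attained at some other $\theta\neq(0,0)$ in the square (besides, differentiating the Lyapunov exponent twice is itself a nontrivial step involving the stationary measure).

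The paper resolves exactly this difficulty with a soft rigidity argument instead of a quantitative block estimate: by Furstenberg's formula, $\lambda^+(A_\theta,\nu)=\int\int_{\mathbb S^1}\log\big(\|A_\theta(x)v\|/\|v\|\big)\,d\mathbb P(v)\,d\nu(x)$ for a $\nu$-stationary measure $\mathbb P$ on the projective space. Since the integrand never exceeds $\log\|B\|$, equality $\lambda^+(A_\theta,\nu)=\log\|B\|$ forces $\mathbb P$ to be a Dirac mass at the leading eigendirection, and such a product measure $\nu\times\delta_{v_+}$ cannot be invariant for the projectivized cocycle once some $\theta_i\neq0$. The stationary measure packages the direction information your block argument lacks; if you wish to salvage your approach, replace the false block estimate either by this stationary-measure rigidity or by an argument tracking the Oseledets/cone direction, but as written the proposal does not establish the strict inequality.
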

\begin{proof}
First observe that, as all matrices are obtained by a rotation of the original hyperbolic matrix, we have that $\log \|A_\theta\|=\log(\frac{3+\sqrt{5}}2)$ for all $\theta\in [-\varepsilon,\varepsilon]^2$. Second,  it is clear from the definition that $\lambda^+(A_{(0,0)},\nu)$ is the logarithm of the largest eigenvalue of the unperturbed hyperbolic matrix, hence it is 
$\log(\frac{3+\sqrt{5}}2)$.
Finally, Furstenberg ~\cite{Fu} proved that
$$
\lambda^+(A_\theta,\nu) = \int  \int_{\mathbb S^1} \log \frac{\|A_\theta(x)\cdot v\|}{\|v\|} \, d\mathbb P(v)\,d\nu(x)
$$
where $\mathbb S^1$ stands for the projective space of $\mathbb R^2$ and $\mathbb P$ is a $\nu$-stationary measure,
meaning that $\nu\times \mathbb P$ is invariant by the projectivization of the map
$F(x,v)=(\sigma(x), A_{x_0}(v))$ for $(x,v)\in \Omega \times \mathbb R^2$. Altogether this guarantees that
$$
\lambda^+(A_\theta,\nu)  = \log(\frac{3+\sqrt{5}}2)
	\quad\text{if and only if}\quad
	\mathbb P=\delta_{v_+}
$$
where $v_+$ is the leading eigenvector of $A_{(0,0)}$, which cannot occur because $\nu\times \delta_{v_+}$ is not invariant by the projectivized cocycle. This proves the lemma.
\end{proof}

\medskip

\subsection{Large deviations: speed of convergence}\label{sec:speed}

Large deviations estimates are commonly used in decision theory (see e.g. \cite{Buck,FLL,Tan}). In the context of dynamical systems,
the exponential rate of convergence in large deviations are defined in terms of rate functions, often described
by thermodynamic quantities as pressure and entropy. In the
case of level-1 large deviation estimates these can be defined as follows.
Given a  family $\Psi^E := \{\psi_n^E\},$ where $\psi_n^E:\Omega \to \mathbb{R},$ $E$ is a Borel set of parameters,  $n \in \mathbb{N}$ and 
$-\infty \leqslant c < d \leqslant \infty$, we define
$$ \overline{R}_\nu ( \Psi^E , [c,d] )= \limsup_{n \to \infty} \frac{1}{n}\,\,  \log \,\nu ( \{\,y \in \Omega : \frac{1}{n} \,\,\psi_n^E(y) \in [c,d] \}) $$
and
$$
\underline{R}_\nu ( \Psi^E , (c,d) )= \liminf_{n \to \infty} \frac{1}{n}\,\,  \log \nu ( \{\,y \in \Omega : \frac{1}{n} \,\,\psi_n^E(y) \in (c,d) \}). $$
%
Since the subshift dynamics satisfies the transitive specification property (also referred as gluing orbit property,
the  \cite[Theorem B]{VZ} ensures the following large deviations principle for the subshift and either
asymptotically additive or certain sequences of sub-additive potentials.

\begin{thm}\label{thm.deviations}
Let $\Phi=\{\varphi_n\}$ be an almost additive family of potentials with $P(\sigma,\Phi)>-\infty$ and let $\nu$
be a Gibbs measure for $\sigma$ with respect to $\Phi$. Assume that either:
\begin{itemize}
\item[(a)] $\Psi=\{\psi_n\}$ is an asymptotically additive family of potentials, or;
\item[(b)] $\Psi=\{\psi_n\}$ is a sub-additive family of potentials such that:
    \begin{itemize}
    \item[i.] $\Psi=\{\psi_n\}$ satisfies the weak Bowen condition:
    		there exists $\delta>0$ so that 
$$
\limsup\limits_{n\rightarrow\infty} \frac
{\sup\{|\varphi_n(y)-\varphi_n(z)|:\
y,z \in B_n(x,\delta)\}}{n}
=0;
$$
    \item[ii.] $\inf_{n\geqslant 1} \frac{\psi_n(x)}{n}>-\infty$ for all $x\in \Omega$;  and
    \item[iii.] the sequence $\{\psi_n/n\}$ is equicontinuous.
    \end{itemize}
\end{itemize}
Given $c \in \mathbb R$, it holds that:
\begin{enumerate}
\item $\overline{R}_{\nu}(\Psi, [c,\infty))
      \leqslant \sup \big\{-P(\sigma,\Phi)+h_\eta(\sigma) + {\mathcal F}(\eta,\Phi) \big\} \}$, where the supremum is over all $\eta\in{\mathcal M}_\sigma(\Omega)$ such that ${\mathcal F}(\eta,\Psi) \geqslant c$;
\item $\underline{R}_{\nu}(\Psi, (c,\infty))
     \geqslant \sup \big\{-P(\sigma,\Phi)+h_\eta(\sigma) + {\mathcal F}(\eta,\Phi) \big\}$
where the supremum is taken over all $\eta\in{\mathcal M}_\sigma(\Omega)$ satisfying ${\mathcal F}(\eta,\Psi) > c$.
\end{enumerate}
\end{thm}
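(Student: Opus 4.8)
The plan is to derive both estimates as a direct application of \cite[Theorem B]{VZ}, after checking that the present setting meets its standing hypotheses, and then to rephrase the rate function produced there in the asserted form via the variational principle for the pressure. The first point to settle is the dynamical hypothesis: a transitive subshift of finite type $\sigma\colon\Omega\to\Omega$ is expansive and enjoys the transitive specification property, equivalently the gluing orbit property, which is exactly the structural assumption under which \cite[Theorem B]{VZ} is established. This follows classically from the transitivity of the transition matrix $M_\Omega$, so the dynamical side requires no further work.

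Next I would verify the hypotheses on the reference data and on the observable family. Since $\Phi=\{\varphi_n\}$ is almost additive it is, in particular, asymptotically additive, so by Kingman's theorem the limit ${\mathcal F}(\eta,\Phi)=\lim_{n}\frac1n\int\varphi_n\,d\eta$ exists for every $\eta\in\mathcal M_\sigma(\Omega)$ and $\eta\mapsto{\mathcal F}(\eta,\Phi)$ is weak$^*$ continuous. Together with $P(\sigma,\Phi)>-\infty$ this makes $P(\sigma,\Phi)=\sup_\eta\{h_\eta(\sigma)+{\mathcal F}(\eta,\Phi)\}$ finite and places its Gibbs measure $\nu$ precisely in the role of the reference equilibrium state in \cite[Theorem B]{VZ}. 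For $\Psi$, case (a) is the asymptotically additive case treated directly there, whereas in case (b) the three conditions (the weak Bowen property, the uniform lower bound $\inf_n\frac1n\psi_n(x)>-\infty$, and equicontinuity of $\{\psi_n/n\}$) are exactly those required for the sub-additive part of \cite[Theorem B]{VZ}; in both cases they guarantee that ${\mathcal F}(\eta,\Psi)=\lim_n\frac1n\int\psi_n\,d\eta$ is well defined and that $\eta\mapsto{\mathcal F}(\eta,\Psi)$ is upper semicontinuous, being in the sub-additive case the infimum $\inf_n\frac1n\int\psi_n\,d\eta$ of weak$^*$-continuous functionals.

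With the hypotheses in place I would invoke \cite[Theorem B]{VZ} to obtain the upper bound on the closed half-line and the lower bound on the open half-line. That result yields
\[
\overline{R}_{\nu}(\Psi,[c,\infty))\leqslant -\inf\big\{\,P(\sigma,\Phi)-h_\eta(\sigma)-{\mathcal F}(\eta,\Phi)\;:\;\eta\in\mathcal M_\sigma(\Omega),\ {\mathcal F}(\eta,\Psi)\geqslant c\,\big\}
\]
and the analogous lower estimate for $\underline{R}_{\nu}(\Psi,(c,\infty))$ subject to ${\mathcal F}(\eta,\Psi)>c$. Rewriting $-\inf(\ldots)$ as $\sup\{-P(\sigma,\Phi)+h_\eta(\sigma)+{\mathcal F}(\eta,\Phi)\}$ over the same constrained set produces verbatim the bounds in items (1) and (2); the closed constraint $\{{\mathcal F}(\eta,\Psi)\geqslant c\}$ in the upper bound and the open constraint $\{{\mathcal F}(\eta,\Psi)>c\}$ in the lower bound reflect the usual closed/open dichotomy of the large deviations principle, matching the sets $[c,\infty)$ and $(c,\infty)$ respectively.

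The hard part will not be the final bookkeeping but confirming that the present data genuinely satisfy the regularity package of \cite[Theorem B]{VZ} and that the extremal measures in the variational rate function exist. For the upper bound one needs upper semicontinuity of $\eta\mapsto h_\eta(\sigma)+{\mathcal F}(\eta,\Phi)$, which I would obtain from expansiveness of $\sigma$ (yielding upper semicontinuity of the entropy map) together with continuity of $\eta\mapsto{\mathcal F}(\eta,\Phi)$ for the asymptotically additive $\Phi$; combined with upper semicontinuity of $\eta\mapsto{\mathcal F}(\eta,\Psi)$ and weak$^*$ compactness of $\mathcal M_\sigma(\Omega)$ this guarantees the supremum is attained. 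For the lower bound the delicate step is the construction, using the gluing orbit property together with the equicontinuity of $\{\psi_n/n\}$ and the weak Bowen property, of enough orbit segments along which $\frac1n\psi_n$ is close to a prescribed value $s>c$ while carrying entropy close to $h_\eta(\sigma)$; this is precisely where the sub-additive hypotheses of case (b) enter, and it is the technical core imported from \cite{VZ}.
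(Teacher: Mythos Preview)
Your proposal is correct and is precisely the approach taken in the paper: the theorem is not proved from scratch but is imported directly from \cite[Theorem~B]{VZ}, the paper merely noting that the transitive subshift of finite type has the specification (gluing orbit) property so that the hypotheses of that result are met. Your write-up simply makes explicit the verification of the standing hypotheses and the passage from the rate function in \cite{VZ} to the form stated here, which is exactly the intended justification.
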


While in the previous theorem both invariant measures and sequences of observables may be generated by
non-additive sequences of potentials (we refer the reader e.g. to \cite{Bar1} for the construction of equilibrium states
associated to almost-additive sequences of potentials) we will be mostly concerned with Gibbs measures generated
by a single Lipschitz continuous potential.  In the special case of the almost-additive sequences considered in Subsection~\ref{aapre}
the previous theorem can read as follows:

\begin{corollary} \label{imp} Let $\Phi=\{\varphi_n\}$ be defined by  $\varphi_n =\sum_{j=0}^{n-1} \log J_{\theta_0},$  $n \in \mathbb{N}$ and let $\mu_{\theta_0}$ denote the corresponding Gibbs measure.  For a given Borel set  $E\subset \Theta$, take   $\Psi^E := \{\psi_n^E\},$  where $\psi_n^E$, $n \in \mathbb{N}$ was defined in Lemma~\ref{kkk1}.
Then, given $\infty \geqslant d>c\geqslant - \infty$ we have:
\begin{enumerate}
\item[a.] $\overline{R}_{\mu_{\theta_0}} ( \Psi^E , [c,d] )
	\leqslant \sup \Big\{ h(\eta) + \int \log J_{\theta_0}\, d \eta\, \colon
	{\eta \in \mathcal{S}(Y)\,\,\text{so that}\,{\mathcal F} (\eta, \Psi^E) \in [c,d]  } \Big\}$ 
\item[b.] $\overline{R}_{\mu_{\theta_0}} ( \Psi^E , (c,d) ) \geqslant
	\sup \Big\{ h(\eta) + \int \log J_{\theta_0}\, d \eta\, \colon
	{\eta \in \mathcal{S}(Y)\,\,\text{so that}\,{\mathcal F} (\eta, \Psi^E) \in (c,d)  } \Big\}$ 
\end{enumerate}
%
%
%
%
\end{corollary}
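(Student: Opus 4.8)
The plan is to read off both estimates as a direct instance of Theorem~\ref{thm.deviations} applied to the pair $\Phi=\{\varphi_n\}$, $\Psi^E=\{\psi_n^E\}$, once the hypotheses are checked and the thermodynamic quantities are specialized. First I would note that $\Phi$, with $\varphi_n=S_n\log J_{\theta_0}$, is an \emph{additive} family, hence almost additive, with
$$
P(\sigma,\Phi)=\sup_{\rho\in\mathcal M_\sigma(\Omega)}\Big\{h(\rho)+\int\log J_{\theta_0}\,d\rho\Big\}=P_{\text{top}}(\sigma,\log J_{\theta_0})=0
$$
by the variational principle~\eqref{mw} for the normalized potential $\log J_{\theta_0}$; in particular $P(\sigma,\Phi)>-\infty$, and $\mu_{\theta_0}$ is exactly the Gibbs (equilibrium) measure of $\sigma$ associated to $\Phi$. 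By $\sigma$-invariance one also has $\mathcal F(\eta,\Phi)=\lim_{n}\frac1n\int S_n\log J_{\theta_0}\,d\eta=\int\log J_{\theta_0}\,d\eta$ for every $\eta\in\mathcal M_\sigma(\Omega)$ (recall $Y=\Omega$, $T=\sigma$, so $\mathcal S(Y)=\mathcal M_\sigma(\Omega)$), whereas $\mathcal F(\eta,\Psi^E)$ is well defined by Kingman's sub-additive ergodic theorem (cf.~\eqref{rty}). Finally, $\Psi^E$ is almost additive by Lemma~\ref{kkk1}, hence asymptotically additive by \cite{Cuneo}; thus assumption~(a) of Theorem~\ref{thm.deviations} holds with $\nu=\mu_{\theta_0}$.

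Granted the above, I would invoke the level-$1$ large deviations principle of \cite[Theorem B]{VZ} underlying Theorem~\ref{thm.deviations}. Its upper bound on the \emph{closed} interval $[c,d]$ reads
$$
\overline R_{\mu_{\theta_0}}(\Psi^E,[c,d])\leqslant\sup\Big\{-P(\sigma,\Phi)+h_\eta(\sigma)+\mathcal F(\eta,\Phi)\ \colon\ \eta\in\mathcal M_\sigma(\Omega),\ \mathcal F(\eta,\Psi^E)\in[c,d]\Big\},
$$
and substituting $P(\sigma,\Phi)=0$ and $\mathcal F(\eta,\Phi)=\int\log J_{\theta_0}\,d\eta$ gives item~(a). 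Its lower bound on the \emph{open} interval $(c,d)$ gives
$$
\underline R_{\mu_{\theta_0}}(\Psi^E,(c,d))\geqslant\sup\Big\{-P(\sigma,\Phi)+h_\eta(\sigma)+\mathcal F(\eta,\Phi)\ \colon\ \eta\in\mathcal M_\sigma(\Omega),\ \mathcal F(\eta,\Psi^E)\in(c,d)\Big\},
$$
and since $\overline R_{\mu_{\theta_0}}(\Psi^E,(c,d))\geqslant\underline R_{\mu_{\theta_0}}(\Psi^E,(c,d))$, the same substitution gives item~(b). The cases $c=-\infty$ or $d=+\infty$ are literally items (1)--(2) of Theorem~\ref{thm.deviations} (the missing endpoint simply dropping the corresponding constraint on $\mathcal F(\eta,\Psi^E)$), using in addition the analogous statement applied to $-\Psi^E$, which is again almost additive, to cover the half-line $(-\infty,d]$.

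The one point that requires care, and is the real content of the argument, is the passage from the half-line statements explicitly recorded in items (1)--(2) of Theorem~\ref{thm.deviations} to the two-sided windows $[c,d]$ and $(c,d)$: writing $[c,d]=[c,\infty)\cap(-\infty,d]$ and intersecting events only yields the weaker bound $\min\{\sup_{\mathcal F(\eta,\Psi^E)\geqslant c}(\cdot),\,\sup_{\mathcal F(\eta,\Psi^E)\leqslant d}(\cdot)\}$, which may strictly exceed $\sup_{\mathcal F(\eta,\Psi^E)\in[c,d]}(\cdot)$ since a measure maximizing $h_\eta(\sigma)+\int\log J_{\theta_0}\,d\eta$ need not satisfy the interval constraint. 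For this reason I would appeal directly to the full large deviations principle of \cite[Theorem B]{VZ} --- upper bound on closed sets, lower bound on open sets, with good rate function $I(s)=\inf\{P(\sigma,\Phi)-h_\eta(\sigma)-\mathcal F(\eta,\Phi)\colon\mathcal F(\eta,\Psi^E)=s\}$ --- and simply evaluate $-\inf_{[c,d]}I$ and $-\inf_{(c,d)}I$, which are precisely the two suprema above. As a sanity check, since $h_\eta(\sigma)+\int\log J_{\theta_0}\,d\eta\leqslant 0$ with equality only at $\eta=\mu_{\theta_0}$, both right-hand sides are $\leqslant 0$ and vanish exactly when $\mathcal F(\mu_{\theta_0},\Psi^E)$ belongs to the respective window.
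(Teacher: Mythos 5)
Your proposal is correct and is essentially the paper's own (implicit) argument: the corollary is stated there as a direct specialization of Theorem~\ref{thm.deviations}, using that $\Phi=\{S_n\log J_{\theta_0}\}$ is additive with $P(\sigma,\Phi)=0$, $\mathcal F(\eta,\Phi)=\int\log J_{\theta_0}\,d\eta$, and that $\Psi^E$ is almost additive by Lemma~\ref{kkk1}, with $\nu=\mu_{\theta_0}$ the associated Gibbs measure. Your extra care in passing from the half-line statements to two-sided windows $[c,d]$, $(c,d)$ via the full closed/open-set LDP of \cite[Theorem B]{VZ} (rather than intersecting half-line events, which only yields a minimum of two suprema) is a detail the paper leaves implicit, and it is handled correctly.
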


As the entropy function of the subshift is upper-semicontinuous, any sequence of invariant
measures whose free energies associated to a continuous potential tend to the topological pressure accumulate
on the space of equilibrium states. Thus, in the special case that there exists a unique equilibrium state, any such sequence
is convergent to the equilibrium state. Altogether the previous argument gives the following:

\begin{lemma} \label{le:gus}
Consider the sequence of functions $\Phi=\{\varphi_n\}_{n\geqslant 1}$ where
$\varphi_n(y) = \sum_{j=0}^{n-1} \log J_{\theta_0}(\sigma^j(y))$
and $\log J_{\theta_0}$ is Lipschitz continuous, and let $\mu_\Phi$ denote the
corresponding Gibbs measure. If $U$ is an open neighborhood of the Gibbs measure $\mu_\Phi$ then there
exists $\alpha_1>0$ such that
 \begin{equation*} \label{gus}
\sup_{\mu \in \mathcal M_\sigma(\Omega)\setminus U} \{\, h(\mu) \,+\, \int \log J_{\theta_0} d \mu \}
= \sup_{\mu \in \mathcal M_\sigma(\Omega)\setminus U} \{\, h(\mu) \,+\,{\mathcal F} (\mu, \Phi) \,\}\,<\,- \alpha_1.
 \end{equation*}
\end{lemma}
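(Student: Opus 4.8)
The plan is to reduce the statement to a routine compactness argument resting on the variational principle and on the uniqueness of equilibrium states for Lipschitz potentials. First I would observe that the equality in the statement is immediate: since $\Phi=\{\varphi_n\}$ is the \emph{additive} sequence $\varphi_n=\sum_{j=0}^{n-1}\log J_{\theta_0}\circ\sigma^j$, the $\sigma$-invariance of any $\mu\in\mathcal M_\sigma(\Omega)$ gives $\int\varphi_n\,d\mu=n\int\log J_{\theta_0}\,d\mu$ for every $n\geqslant 1$, so that by ~\eqref{rty} one has ${\mathcal F}(\mu,\Phi)=\int\log J_{\theta_0}\,d\mu$. Hence it suffices to bound $\sup_{\mu\in\mathcal M_\sigma(\Omega)\setminus U} G(\mu)$, where $G(\mu):=h(\mu)+\int\log J_{\theta_0}\,d\mu$.

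Next I would invoke the classical thermodynamic formalism: because $\log J_{\theta_0}$ is Lipschitz and normalized, ~\eqref{mw} (equivalently ~\eqref{mw44}) gives $\sup_{\mu\in\mathcal M_\sigma(\Omega)}G(\mu)=0$, and by uniqueness of the equilibrium state the supremum is attained \emph{only} at $\mu_\Phi=\mu_{\theta_0}$. The two structural facts needed are that $\mathcal M_\sigma(\Omega)$ is compact and that $G$ is upper semicontinuous on it: the map $\mu\mapsto\int\log J_{\theta_0}\,d\mu$ is weak$^*$-continuous since $\log J_{\theta_0}\in C(\Omega,\mathbb R)$, and $\mu\mapsto h(\mu)$ is upper semicontinuous on the subshift of finite type. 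Now $K:=\mathcal M_\sigma(\Omega)\setminus U$ is closed in the compact space $\mathcal M_\sigma(\Omega)$, hence compact; if $K=\emptyset$ then $\sup_K G=-\infty$ and any $\alpha_1>0$ works, so assume $K\neq\emptyset$. Being upper semicontinuous on the compact set $K$, the functional $G$ attains its maximum over $K$ at some $\mu_*\in K$. Since $\mu_\Phi\in U$ while $\mu_*\notin U$, we have $\mu_*\neq\mu_\Phi$, and the uniqueness of the global maximizer forces $G(\mu_*)<0$; it then suffices to choose any $\alpha_1\in\big(0,-G(\mu_*)\big)$, for which $\sup_{\mu\in K}G(\mu)=G(\mu_*)<-\alpha_1$, as claimed. (Alternatively, one may argue by contradiction: if $\sup_K G=0$, pick $\mu_k\in K$ with $G(\mu_k)\to 0$, extract a weak$^*$-convergent subsequence $\mu_k\to\mu_*\in K$ by compactness, and use upper semicontinuity to get $G(\mu_*)\geqslant\limsup_k G(\mu_k)=0$, so that $\mu_*$ is an equilibrium state, hence $\mu_*=\mu_\Phi\in U$, contradicting $\mu_*\in K$.)

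I do not expect a genuine obstacle here: the whole argument is carried by the upper semicontinuity of the metric entropy on the subshift of finite type together with the uniqueness of the equilibrium state for a Lipschitz potential. The only points requiring a little care are the degenerate case $K=\emptyset$ and the bookkeeping that selects $\alpha_1$ strictly below $-G(\mu_*)$ so as to obtain the strict inequality in the statement.
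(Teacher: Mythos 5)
Your argument is correct and is essentially the paper's own: the lemma is justified there by exactly the same ingredients (weak$^*$ compactness of $\mathcal M_\sigma(\Omega)$, upper semicontinuity of the entropy map on the subshift, continuity of $\mu\mapsto\int\log J_{\theta_0}\,d\mu$, and uniqueness of the equilibrium state for the normalized Lipschitz potential, cf.\ the discussion preceding the lemma and the proof of Lemma~\ref{acho}). Your explicit treatment of the additive reduction ${\mathcal F}(\mu,\Phi)=\int\log J_{\theta_0}\,d\mu$ and of the degenerate case $\mathcal M_\sigma(\Omega)\setminus U=\emptyset$ only makes the sketch more complete.
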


\medskip

We are particularly   interested in the $\delta$-neighborhood of the parameter
$\theta_0\in \Theta$ defined by
\begin{equation}\label{def:ont0}
B_\delta = \{ \,\theta \in \Theta\,|\, d_\theta(\theta, \theta_0)<\delta\,\},\quad \text{ for some $\delta>0$.}
\end{equation}
The next result establishes large deviations estimates for relative entropy
associated to Gibbs measures close to $\mu_{\theta_0}$. More precisely:

\begin{prop} \label{rera}
Let $\Psi^E$ be defined by ~\eqref{kkk2}.
For any  $\delta>0$ there exists $d_\delta>0$ satisfying
 $$ \,{\mathcal F} (\mu_{\theta_0}, \Psi^{B_\delta}) \,\,  < d_\delta< {\mathcal F} (\mu_{\theta_0}, \Psi^\Theta)=  \int_\Theta h(\mu_{\theta_0} \mid \mu_\theta )  \, d \Pi_0 (\theta).$$
 Moreover, 
 for every small $\delta>0$ there exists $\alpha_1>0$ so that
\begin{equation*} \label{cgl}
\limsup_{n \to \infty} \frac{1}{n}\,\,  \log \Big[\,\mu_{\theta_0} \Big( \{\, y\in \Omega \,\colon \, - \frac{1}{n}
\int_{B_\delta} \log \Big(\frac{\mu_\theta (\,C_n (y)\, )}{ \mu_{\theta_0} (\,C_n (y)\, ) }\Big)\, \, d \Pi_0 (\theta)   \in [d_\delta, \infty)    \}  \Big)  \,\Big]
	\leqslant - \alpha_1.
\end{equation*}
\end{prop}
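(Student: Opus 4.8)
The plan is to prove the proposition in two steps: first identify the free energies $\mathcal{F}(\mu_{\theta_0},\Psi^{B_\delta})$ and $\mathcal{F}(\mu_{\theta_0},\Psi^\Theta)$ and extract $d_\delta$, and then feed the almost-additive family $\Psi^{B_\delta}$ into the large deviations Corollary~\ref{imp}. For the first step I would show that, for every Borel set $E\subset\Theta$,
\[
\mathcal{F}(\mu_{\theta_0},\Psi^E) = \int_E h(\mu_{\theta_0}\mid\mu_\theta)\,d\Pi_0(\theta).
\]
Indeed $\Psi_n^E(y)=\int_E \log\big(\mu_{\theta_0}(C_n(y))/\mu_\theta(C_n(y))\big)\,d\Pi_0(\theta)$, and the Gibbs property \eqref{eq:Gibbs} (with $P_\theta=0$) together with the uniform bound \eqref{372} show that $\tfrac1n\Psi_n^E$ is bounded uniformly in $n$, $y$ and $E$, so Fubini's theorem and the dominated convergence theorem apply: for each fixed $\theta$ one has $\tfrac1n\log\big(\mu_{\theta_0}(C_n(y))/\mu_\theta(C_n(y))\big)\to h(\mu_{\theta_0}\mid\mu_\theta)$ for $\mu_{\theta_0}$-a.e. $y$ by \eqref{Chacha}, and integrating first in $y$ and then in $\theta$ gives the displayed identity (which for $E=\Theta$ is the equality already asserted in the statement). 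Taking $E=B_\delta$ and $E=\Theta$, the strict inequality $\mathcal{F}(\mu_{\theta_0},\Psi^{B_\delta})<\mathcal{F}(\mu_{\theta_0},\Psi^\Theta)$ reduces to $\int_{\Theta\setminus B_\delta} h(\mu_{\theta_0}\mid\mu_\theta)\,d\Pi_0(\theta)>0$, which I would justify as follows: $\Theta\setminus B_\delta$ is compact and avoids $\theta_0$, so $h(\mu_{\theta_0}\mid\mu_\theta)>0$ there (the normalized potentials $f_\theta$, $\theta\neq\theta_0$, are non-cohomologous to $f_{\theta_0}$ since $f$ is injective); the map $\theta\mapsto h(\mu_{\theta_0}\mid\mu_\theta)=-h(\mu_{\theta_0})-\int\log J_\theta\,d\mu_{\theta_0}$ is continuous by \eqref{chazo}, hence bounded below by a positive constant on $\Theta\setminus B_\delta$; and $\Pi_0(\Theta\setminus B_\delta)>0$ by Hypothesis~(A). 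One then picks $d_\delta$ in the nonempty interval $\big(\mathcal{F}(\mu_{\theta_0},\Psi^{B_\delta}),\,\mathcal{F}(\mu_{\theta_0},\Psi^\Theta)\big)$; since relative entropy is nonnegative, $d_\delta>0$ automatically.

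For the large deviations estimate I would apply Corollary~\ref{imp} to the almost-additive family $\Psi^{B_\delta}$ (almost additive by Lemma~\ref{kkk1}) with $[c,d]=[d_\delta,\infty)$ and $\Phi=\{S_n\log J_{\theta_0}\}$, whose Gibbs measure is $\mu_{\theta_0}$ and whose pressure vanishes by \eqref{mw44}, to obtain
\begin{align*}
\limsup_{n\to\infty}\tfrac1n\log\mu_{\theta_0}\Big(\big\{\,y:\tfrac1n\Psi_n^{B_\delta}(y)\ge d_\delta\,\big\}\Big)
	&\le \sup\Big\{\,h(\eta)+\int\log J_{\theta_0}\,d\eta:\ \eta\in\mathcal{M}_\sigma(\Omega),\ \mathcal{F}(\eta,\Psi^{B_\delta})\ge d_\delta\,\Big\}.
\end{align*}
It then remains to bound this supremum away from $0$. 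Since $\{\Psi_n^{B_\delta}+C\}_n$ is subadditive (with $C$ the almost-additivity constant), Fekete's lemma gives $\mathcal{F}(\eta,\Psi^{B_\delta})=\inf_{n\ge1}\tfrac1n\int(\Psi_n^{B_\delta}+C)\,d\eta$ for every $\eta\in\mathcal{M}_\sigma(\Omega)$, an infimum of weak$^*$-continuous functions, so $\eta\mapsto\mathcal{F}(\eta,\Psi^{B_\delta})$ is upper semicontinuous and the level set $K:=\{\eta\in\mathcal{M}_\sigma(\Omega):\mathcal{F}(\eta,\Psi^{B_\delta})\ge d_\delta\}$ is closed, hence compact. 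By the first step $\mathcal{F}(\mu_{\theta_0},\Psi^{B_\delta})<d_\delta$, so $\mu_{\theta_0}\notin K$; choosing an open neighborhood $U$ of $\mu_{\theta_0}$ disjoint from $K$, Lemma~\ref{le:gus} furnishes $\alpha_1>0$ with $\sup_{\eta\in\mathcal{M}_\sigma(\Omega)\setminus U}\{h(\eta)+\int\log J_{\theta_0}\,d\eta\}<-\alpha_1$, whence the supremum over $K\subset\mathcal{M}_\sigma(\Omega)\setminus U$ is $<-\alpha_1$ and the desired estimate follows.

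The computational parts — the Fubini and dominated-convergence manipulations and the invocation of Corollary~\ref{imp} — are routine. The one point that requires care, and which I expect to be the crux of the argument, is the final reduction: the large deviations upper bound naturally appears as a supremum over invariant measures $\eta$ with $\mathcal{F}(\eta,\Psi^{B_\delta})\ge d_\delta$, and turning it into a strictly negative number requires knowing that this level set is compact and stays away from $\mu_{\theta_0}$. This is precisely where the upper semicontinuity of the free-energy functional (a consequence of the subadditivity underlying almost-additivity), the compactness of $\mathcal{M}_\sigma(\Omega)$, the upper semicontinuity of the entropy map, and the uniqueness of the equilibrium state $\mu_{\theta_0}$ (as packaged in Lemma~\ref{le:gus}) all have to be combined.
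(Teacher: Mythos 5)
Your proposal is correct and follows essentially the same route as the paper: identify $\mathcal F(\mu_{\theta_0},\Psi^{E})=\int_E h(\mu_{\theta_0}\mid\mu_\theta)\,d\Pi_0$, use positivity of the relative entropy off $B_\delta$ together with full support of $\Pi_0$ to wedge $d_\delta$ strictly between the two free energies, and then combine the large deviations upper bound of Corollary~\ref{imp} with Lemma~\ref{le:gus} applied to the neighborhood $U=\{\eta:\mathcal F(\eta,\Psi^{B_\delta})<d_\delta\}$ of $\mu_{\theta_0}$. Your justification of the semicontinuity of $\eta\mapsto\mathcal F(\eta,\Psi^{B_\delta})$ via Fekete's lemma is a slightly more careful rendering of the continuity the paper simply asserts, but the argument is the same.
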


\medskip
\begin{proof}
Remember 
that, given $\eta \in \mathcal{M}_\sigma( \Omega)$ and $E\subset \Theta$,
$$-\lim_{n \to \infty} \frac{1}{n}
\int\,\int_E \,\log
\Big(\frac{\mu_\theta (\,C_n (y)\, )}{ \mu_{\theta_0} (\,C_n (y)\, ) }\Big)\,
d \Pi_0 (\theta)\, d \eta (y)
=
\lim_{n \to \infty} \frac{1}{n} \int \psi_n^E (y) \, d \eta(y)\,=  \,{\mathcal F} (\eta, \Psi^E).
$$
Taking $\eta = \mu_{\theta_0}$ and $E=\Theta$ we get from (\ref{Chacha}), (\ref{chazo}) and Lemma~\ref{acho}
that
\begin{align}
{\mathcal F} (\mu_{\theta_0}, \Psi^\Theta)
	& = \int_\Theta
	\int - \lim_{n \to \infty} \frac{1}{n} \log
	\Big(\frac{\mu_\theta (\,C_n (y)\, )}{ \mu_{\theta_0} (\,C_n (y)\, ) }\Big)\,  \, d \mu_{\theta_0} (y)\,
	\,d \Pi_0 (\theta) \nonumber \\
	& = \int_\Theta h(\mu_{\theta_0} \mid \mu_\theta )  \,d \Pi_0 (\theta)\nonumber  \\
	& = \label{sky}   - h (\mu_{\theta_0}) -
\int_\Theta \int \log J_\theta \,d \mu_\theta \,d \Pi_0. 
\end{align}
Similarly, one obtains
$
{\mathcal F} (\mu_{\theta_0}, \Psi^E)  =    - h (\mu_{\theta_0}) \, \Pi_0(E)-
\int_{E}  \int \log J_\theta \,d \mu_\theta \,d \Pi_0
$
  for any $E \subset \Theta$.
Using that $h(\mu_{\theta_0} \mid \mu_\theta )>0$ for all $\theta\neq\theta_0$ and that $\Pi_0$ is fully supported on $\Theta$,
%
Lemma~\ref{acho} ensures that
$$
	 \int_{\Theta \setminus B_\delta}h(\mu_{\theta_0} \mid \mu_\theta )  \,d \Pi_0 (\theta) > 0
$$
for every small $\delta$.
In consequence,
\begin{align*}
{\mathcal F} (\mu_{\theta_0}, \Psi^{B_\delta}) =
%
\int_{B_\delta}\, h(\mu_{\theta_0},\mu_\theta)\, d \Pi_0 (\theta)\,
	& <  \int_{B_\delta} \,\int - \lim_{n \to \infty} \frac{1}{n} \log (\frac{\mu_\theta (\,C_n (y)\, )}{ \mu_{\theta_0} (\,C_n (y)\, ) }) \, d \mu_{\theta_0} (y)\,\,d \Pi_0 (\theta)\, \\
	& +
\int_{\Theta \setminus B_\delta} \,\int - \lim_{n \to \infty} \frac{1}{n} \log (\frac{\mu_\theta (\,C_n (y)\, )}{ \mu_{\theta_0} (\,C_n (y)\, ) }) \, d \mu_{\theta_0} (y)\,\,d \Pi_0 (\theta) \\
	&  = \int_\Theta h(\mu_{\theta_0} \mid \mu_\theta )  d \Pi_0 (\theta)
	= {\mathcal F} (\mu_{\theta_0}, \Psi^{\Theta}).
\end{align*}
for every small $\delta$, hence there exists $d_\delta>0$ so that
\begin{equation} \label{gtk}     {\mathcal F} (\mu_{\theta_0}, \Psi^{B_\delta}) < d_\delta<{\mathcal F} (\mu_{\theta_0}, \Psi^{\Theta}). \end{equation}
%
%
Now, on the one hand, by continuity of $\eta\mapsto {\mathcal F} (\eta, \Psi^{B_\delta})$, 
the set
$U=\{\,\eta \in \mathcal M_\sigma(\Omega)\,\,:\,\,{\mathcal F} (\eta, \Psi^{B_\delta})< d _\delta  \}$
is an open neighborhood of $\mu_{\theta_0}.$ On the other hand, according to Lemma~\ref{le:gus} there exists $\alpha_1>0$ such that
$$
\sup_{\eta \in \mathcal M_\sigma(\Omega) \setminus U } \Big\{ h(\eta) + \int \log J_{\theta_0}\, d \eta\,\Big\}
	\leqslant - \alpha_1<0.
$$
Therefore, from~Theorem \ref{imp}
\begin{align*}
\limsup_{n \to \infty} \frac{1}{n}\,\,  & \log \mu_{\theta_0} \Big( \{\, y \in \Omega\,|\, - \frac{1}{n}
\int_{E^\delta}\log (\frac{\mu_\theta (\,C_n (y)\, )}{ \mu_{\theta_0} (\,C_n (y)\, ) })   \, d \Pi_0 (\theta)\,        \in [d_\delta, \infty)      \}  \Big)   \\
& \leqslant
 \sup_{\{\eta \in \mathcal M_\sigma(\Omega)\,\colon \,{\mathcal F} (\eta, \Psi^{B_\delta}) > d_\delta \}} \{ h(\eta) + \int \log J_{\theta_0}\, d \eta\,\} \leqslant \,-\,\alpha_1<0.
\end{align*}


\end{proof}


 \medskip

 \begin{rmk} \label{rema}
 From Hypothesis A the value $d_\delta>0$ can be taken small, if $\delta>0$ is small,  because
 $ \,{\mathcal F} (\mu_{\theta_0}, \Psi^{B_\delta}) \,=\,  \int_{B_\delta}\, h(\mu_{\theta_0}\mid\mu_\theta)\, d \Pi_0 (\theta)\,   .
 $
 \end{rmk}

 \medskip

\begin{cor} \label{rer3} Given $\delta>0$ small let $B_\delta\subset \Theta$ be the $\delta$-open neighborhood of $\theta_0$
defined in \eqref{def:ont0} and let $d_\delta>0$ be given by Proposition \ref{rera}.
The following holds:
\begin{equation} \label{ss}  \limsup_{n \to \infty}  - \frac{1}{n}\log
\int_{B_\delta} \frac{ \mu_{\theta}  (\,C_n (y)\, ) }{ \mu_{\theta_0}  (\,C_n (y)\, )}\, d \Pi_0 (\theta)     \leqslant d_\delta
\end{equation}
for  $\mu_{\theta_0}$-almost every point $y$.
Moreover, for  $\mu_{\theta_0}$-almost every point $y$
\begin{equation} \label{sis}   \liminf_{n \to \infty} \frac{1}{n} \log\,\,
\int_{B_\delta}   \mu_{\theta}  (\,C_n (y)\, )\, d \Pi_0 (\theta) \geqslant - \Pi_0 (B_\delta)\cdot h(\mu_{\theta_0})- d_\delta.\end{equation}
\end{cor}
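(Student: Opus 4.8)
The plan is to deduce both estimates from the almost-sure convergence of the almost-additive sequence $\Psi^{B_\delta}$ (Lemma \ref{kkk1}) together with the large deviations bound established in Proposition \ref{rera}. For \eqref{ss}: by Kingman's subadditive ergodic theorem applied to the almost-additive sequence $y\mapsto \Psi^{B_\delta}_n(y)$ (recall \eqref{rty}), the averages $\frac1n \Psi^{B_\delta}_n(y)$ converge $\mu_{\theta_0}$-almost everywhere to the constant ${\mathcal F}(\mu_{\theta_0},\Psi^{B_\delta})$, which by \eqref{gtk} is strictly less than $d_\delta$. Thus for $\mu_{\theta_0}$-almost every $y$ one has $\frac1n\Psi^{B_\delta}_n(y)<d_\delta$ for all large $n$. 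Now I would pass from this pointwise information on the integrated logarithm $\Psi^{B_\delta}_n(y) = -\int_{B_\delta}\log\big(\mu_\theta(C_n(y))/\mu_{\theta_0}(C_n(y))\big)\,d\Pi_0(\theta)$ to the desired statement about $-\frac1n\log\int_{B_\delta}\frac{\mu_\theta(C_n(y))}{\mu_{\theta_0}(C_n(y))}\,d\Pi_0(\theta)$. This is precisely the point flagged in the remark after Lemma \ref{le:ln-mu}: the two quantities differ by a Jensen-type gap. Here only one direction of Jensen is needed: since $\log$ is concave, $\log\int_{B_\delta}\frac{\mu_\theta(C_n(y))}{\mu_{\theta_0}(C_n(y))}\,d\Pi_0(\theta)\geqslant \int_{B_\delta}\log\frac{\mu_\theta(C_n(y))}{\mu_{\theta_0}(C_n(y))}\,d\Pi_0(\theta) = -\Psi^{B_\delta}_n(y)$ (after normalizing $\Pi_0$ to a probability on $B_\delta$, or equivalently noting $\Pi_0(B_\delta)\leqslant 1$ and handling the constant), so $-\frac1n\log\int_{B_\delta}\frac{\mu_\theta(C_n(y))}{\mu_{\theta_0}(C_n(y))}\,d\Pi_0(\theta)\leqslant \frac1n\Psi^{B_\delta}_n(y)$, and taking $\limsup$ gives \eqref{ss}. (One may invoke Proposition \ref{rera} directly instead, via a Borel--Cantelli argument using the summability of $e^{-\alpha_1 n}$, to avoid even this small computation; either route works.)

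For \eqref{sis} I would write
\[
\int_{B_\delta} \mu_\theta(C_n(y))\,d\Pi_0(\theta)
	= \mu_{\theta_0}(C_n(y)) \cdot \int_{B_\delta} \frac{\mu_\theta(C_n(y))}{\mu_{\theta_0}(C_n(y))}\,d\Pi_0(\theta),
\]
take $\frac1n\log$ of both sides, and estimate the two factors separately. For the first factor, the Shannon--McMillan--Breiman theorem \eqref{eq:SMB} gives $\lim_{n\to\infty}\frac1n\log\mu_{\theta_0}(C_n(y)) = -h(\mu_{\theta_0})$ for $\mu_{\theta_0}$-almost every $y$ (here I use that $\mu_{\theta_0}$ is ergodic, being a Gibbs measure for a Lipschitz potential). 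For the second factor, \eqref{ss} gives $\liminf_{n\to\infty}\frac1n\log\int_{B_\delta}\frac{\mu_\theta(C_n(y))}{\mu_{\theta_0}(C_n(y))}\,d\Pi_0(\theta)\geqslant -d_\delta$ for $\mu_{\theta_0}$-almost every $y$. Adding these, and using superadditivity of $\liminf$, yields $\liminf_{n\to\infty}\frac1n\log\int_{B_\delta}\mu_\theta(C_n(y))\,d\Pi_0(\theta)\geqslant -h(\mu_{\theta_0}) - d_\delta$. It remains to insert the factor $\Pi_0(B_\delta)$ appearing in the statement; this comes from the normalization of $\Pi_0$ restricted to $B_\delta$ used when applying Jensen's inequality to the measure $\frac{1}{\Pi_0(B_\delta)}\Pi_0|_{B_\delta}$, which introduces an additive $\log\Pi_0(B_\delta)$ term in the logarithm and hence multiplies $h(\mu_{\theta_0})$ by $\Pi_0(B_\delta)$ once the SMB estimate for the first factor is combined with the Jensen bound — I would carry this bookkeeping out carefully, as the exact form of the constant depends on how one distributes the $\Pi_0$-mass between the two factors.

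The main obstacle is the Jensen gap: the naive sequence $y\mapsto -\log\int_{B_\delta}\frac{\mu_\theta(C_n(y))}{\mu_{\theta_0}(C_n(y))}\,d\Pi_0(\theta)$ is \emph{not} almost-additive (as explicitly warned before Lemma \ref{kkk1}), so one cannot apply Kingman's theorem or Theorem \ref{thm.deviations} to it directly. The whole point of working with $\Psi^{B_\delta}$ — integrating the logarithm rather than taking the logarithm of the integral — is to have an almost-additive object to which the non-additive thermodynamic machinery applies; the concavity of $\log$ then bridges the two in the one direction ($\geqslant$) that is needed for an upper bound on $\limsup(-\frac1n\log\cdots)$ and a lower bound on $\liminf(\frac1n\log\cdots)$. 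I expect the rest to be routine once the inequality $\log\int(\cdots)\,d\Pi_0 \geqslant \int\log(\cdots)\,d\Pi_0 + \log\Pi_0(B_\delta)$ is set up with the correct normalization, and once it is observed that \eqref{ss} is exactly what is required as input to \eqref{sis}.
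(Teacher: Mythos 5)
Your overall route is the same as the paper's: first control the almost-additive integrated logarithm $\Psi^{B_\delta}_n(y)=-\int_{B_\delta}\log\big(\mu_\theta(C_n(y))/\mu_{\theta_0}(C_n(y))\big)\,d\Pi_0(\theta)$, obtaining $\frac1n\Psi^{B_\delta}_n(y)<d_\delta$ for all large $n$ and $\mu_{\theta_0}$-a.e.\ $y$, then pass to the logarithm of the integral by concavity, and for \eqref{sis} combine with the Shannon--McMillan--Breiman limit $\frac1n\log\mu_{\theta_0}(C_n(y))\to-h(\mu_{\theta_0})$. The only difference in the first step is cosmetic: the paper gets the eventual bound from Proposition~\ref{rera} plus Borel--Cantelli, while you invoke Kingman/\eqref{rty} together with \eqref{gtk}; both work, and you mention the Borel--Cantelli route yourself.

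The step you defer, however, is not routine bookkeeping, and as written it is a genuine gap. The inequality $\log\int_{B_\delta}g\,d\Pi_0\geqslant\int_{B_\delta}\log g\,d\Pi_0$ is \emph{not} implied by ``$\Pi_0(B_\delta)\leqslant1$'': Jensen applies to the normalized measure and gives
$$\log\int_{B_\delta}g\,d\Pi_0\;\geqslant\;\log\Pi_0(B_\delta)+\frac1{\Pi_0(B_\delta)}\int_{B_\delta}\log g\,d\Pi_0,$$
and these two formulations are not equivalent here, because $\int_{B_\delta}\log g_n\,d\Pi_0=-\Psi^{B_\delta}_n(y)$ is (typically) negative and of order $n$, so dividing it by $\Pi_0(B_\delta)<1$ makes it strictly more negative; the unnormalized version even fails for a constant $g\equiv c<1$. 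Carried out with the correct normalization, your argument yields $\limsup_n -\frac1n\log\int_{B_\delta}(\cdots)\,d\Pi_0\leqslant d_\delta/\Pi_0(B_\delta)$ in \eqref{ss} and $\liminf\geqslant -h(\mu_{\theta_0})-d_\delta/\Pi_0(B_\delta)$ in \eqref{sis}, not the stated constants; and your suggestion that the additive $\log\Pi_0(B_\delta)$ term ``multiplies $h(\mu_{\theta_0})$ by $\Pi_0(B_\delta)$'' cannot work, since that term is $O(1)$ and disappears after dividing by $n$. In the paper the factor $\Pi_0(B_\delta)$ arises differently: the concavity comparison is asserted in the unnormalized form $\log\int_{B_\delta}\mu_\theta(C_n(y))\,d\Pi_0\geqslant\int_{B_\delta}\log\mu_\theta(C_n(y))\,d\Pi_0$ (the first inequality in \eqref{tresu}), after which $\int_{B_\delta}\log\mu_{\theta_0}(C_n(y))\,d\Pi_0=\Pi_0(B_\delta)\log\mu_{\theta_0}(C_n(y))$ produces $\Pi_0(B_\delta)\,h(\mu_{\theta_0})$ --- i.e.\ exactly the step whose validity you (rightly) hesitated over. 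So you have reconstructed the paper's argument faithfully, including its delicate point; but the proposal as written does not close it, and ``either route works'' is not a correct resolution, since the normalized and unnormalized versions lead to genuinely different constants. Note also that your alternative factorization for \eqref{sis} (pulling out $\mu_{\theta_0}(C_n(y))$ and using \eqref{ss} plus SMB) is sound once \eqref{ss} is available, but it gives $\liminf\geqslant -h(\mu_{\theta_0})-d_\delta$, which is not the bound stated in the corollary.
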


\begin{proof} For each $n\geqslant 1$ consider the set
$$A_n= \Big\{\, y\in \Omega\,|\, - \frac{1}{n}
\int_{B_\delta}  \log  \frac{ \mu_\theta  (\,C_n (y)\, ) }{ \mu_{\theta_0}  (\,C_n (y)\, )} \, d \Pi_0 (\theta)   \in [d_{\delta},  \infty)      \Big\} . $$
By Proposition \ref{rera}, we get that $\sum_n \mu_{\theta_0} (A_n) <\infty.$ It follows from   Borel-Cantelli Lemma that for $\mu_{\theta_0}$-almost every point $y\in\Omega$ there exists an $N$, such that
$y\notin A_n$ for all $n>N$. Equivalently,  $- \frac{1}{n}
\int_{B_\delta} \log(  \frac{ \mu_{\theta}  (\,C_n (y)\, ) }{ \mu_{\theta_0}  (\,C_n (y)\, )}\,)\, d \Pi_0 (\theta)< d_\delta$
for all $n>N$, which proves \eqref{ss}.
Therefore, from Jensen inequality, we get for $\mu_{\theta_0}$-almost every $y\in\Omega$ and every large $n\geqslant 1$
 \begin{align}
\nonumber    \frac{1}{n} & \log\,\,
\int_{B_\delta} \,  \mu_{\theta}\  (\,C_n (y)\, )\, d \Pi_0 (\theta)\,\,  - \frac{1}{n} \int_{B_\delta}  \log ( \mu_{\theta_0}  (\,C_n (y)\, )\,)\, d \Pi_0 (\theta)   \\
\nonumber	& \geqslant  \frac{1}{n} \int_{B_\delta}  \log(\,   \mu_{\theta}  (\,C_n (y)\, ) \, d \Pi_0 (\theta) - \frac{1}{n} \int_{B_\delta}  \log ( \mu_{\theta_0}  (\,C_n (y)\, )\,)\, d \Pi_0 (\theta) \\
	& \label{tresu} = \frac{1}{n} \int_{B_\delta}  \log  \frac{ \mu_{\theta}  (\,C_n (y)\, ) }{ \mu_{\theta_0}  (\,C_n (y)\, )}\, d \Pi_0 (\theta) \geqslant -  d_\delta.
\end{align}
Moreover,  as $\lim_{n\to\infty} - \frac{1}{n} \log ( \mu_{\theta_0}  (\,C_n (y)\, )=h(\mu_{\theta_0})$ for $\mu_{\theta_0}$-almost every $y$, it follows from the previous inequalities that
$$
 \liminf_{n \to \infty} \frac{1}{n} \log\,\,
\int_{B_\delta} \,  \mu_{\theta}  (C_n (y))\, d \Pi_0 (\theta)\,\, +
\Pi_0 (B_\delta)\, h(\mu_{\theta_0}) \geqslant - d_\delta
$$
for $\mu_{\theta_0}$ almost every $y$, which proves ~\eqref{sis}, as desired.
\end{proof}

\smallskip

\begin{rmk} \label{wer}
\emph{
The previous corollary ensures that for any $\zeta>0$ and $\mu_{\theta_0}$-a.e. $y\in \Omega$
\begin{equation*} \label{sus}
 \int_{B_\delta} \,  \mu_{\theta}  (\,C_{n} (y)\, )\, d \Pi_0 (\theta) \geqslant   e^{- [d_\delta + \Pi_0 (B_\delta)\,\, h(\mu_{\theta_0})+\zeta] \,{n}} \quad \text{for every large $n\geqslant 1$.}
\end{equation*}
Moreover, Remark \ref{rema} guarantees that $d_\delta>0$ can be chosen small provided that $\delta$ is small.
In particular, the absolute continuity assumption on the \emph{a priori} measure $\Pi_0$ (hypothesis A) implies
that $\Pi_0 (B_\delta)\, h(\mu_{\theta_0})+ d_\delta$ can be taken arbitrarily small, provided that $\delta$ is small.
}
\end{rmk}

\color{black}

\smallskip


\medskip

\begin{lemma} \label{ufa} For small $\delta>0$ and $\mu_{\theta_0}$-almost every $y\in \Omega$
\begin{equation} \label{lok}
 \limsup_{n \to \infty}  \frac{1}{n}\,\log  \int_{\Theta \setminus B_\delta}     \mu_\theta (C_n (y) ) \,d\Pi_0 (\theta)
	\leqslant \sup_{\theta \in \Theta \setminus B_\delta} \int \log J_\theta d \mu_{\theta_0}<0.
\end{equation}
Moreover, $\sup_{\theta \in \Theta \setminus B_\delta} \int \log J_\theta \,d \mu_{\theta_0}\to - h(\mu_{\theta_0})$
as $\delta\to 0$.

\end{lemma}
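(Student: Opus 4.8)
The plan is to reduce \eqref{lok} to the Gibbs property together with Birkhoff's ergodic theorem for the ergodic measure $\mu_{\theta_0}$, and then to obtain the strict negativity on the right-hand side from the variational principle \eqref{mw} and uniqueness of equilibrium states. Since $\Theta$ is compact and $\theta\mapsto\log J_\theta$ is continuous in the Lipschitz norm, the family $\{\log J_\theta:\theta\in\Theta\}$ is bounded in the Lipschitz norm, so the usual bounded distortion estimates provide a \emph{uniform} constant $K>1$ with $\mu_\theta(C_n(y))\leqslant K\,e^{S_n\log J_\theta(y)}$ for every $n\geqslant1$, $y\in\Omega$ and $\theta\in\Theta$ (this is \eqref{eq:Gibbs} with $P_\theta=0$). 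As $\Pi_0$ is a probability measure this gives
\[
\int_{\Theta\setminus B_\delta}\mu_\theta(C_n(y))\,d\Pi_0(\theta)\;\leqslant\;K\sup_{\theta\in\Theta\setminus B_\delta}e^{S_n\log J_\theta(y)},
\]
so it suffices to bound $\limsup_{n\to\infty}\frac1n\sup_{\theta\in\Theta\setminus B_\delta}S_n\log J_\theta(y)$ from above.

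Next I would establish a uniform-in-$\theta$ ergodic theorem: for $\mu_{\theta_0}$-a.e.\ $y\in\Omega$, the maps $\theta\mapsto\frac1n S_n\log J_\theta(y)$ converge uniformly on $\Theta$, as $n\to\infty$, to $\theta\mapsto\int\log J_\theta\,d\mu_{\theta_0}$. Fixing a countable dense set $\{\theta_i\}\subset\Theta$, Birkhoff's theorem applied to $\mu_{\theta_0}$ and each continuous observable $\log J_{\theta_i}$ gives a single full-measure set on which $\frac1n S_n\log J_{\theta_i}(y)\to\int\log J_{\theta_i}\,d\mu_{\theta_0}$ for all $i$; the family $\big(\theta\mapsto\frac1n S_n\log J_\theta(y)\big)_n$ is equicontinuous because $\big|\frac1n S_n\log J_\theta(y)-\frac1n S_n\log J_{\theta'}(y)\big|\leqslant\|\log J_\theta-\log J_{\theta'}\|_\infty$ and $\theta\mapsto\log J_\theta$ is uniformly continuous on the compact set $\Theta$. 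Hence pointwise convergence on a dense subset of $\Theta$ upgrades to uniform convergence, necessarily to the continuous limit $\theta\mapsto\int\log J_\theta\,d\mu_{\theta_0}$, and so $\limsup_{n\to\infty}\frac1n\sup_{\theta\in\Theta\setminus B_\delta}S_n\log J_\theta(y)\leqslant\sup_{\theta\in\Theta\setminus B_\delta}\int\log J_\theta\,d\mu_{\theta_0}$; combined with the previous display this proves the first inequality in \eqref{lok}.

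Finally, for the strict inequality, the variational principle \eqref{mw} gives $h(\mu_{\theta_0})+\int\log J_\theta\,d\mu_{\theta_0}\leqslant0$ for all $\theta$, with equality exactly when $\mu_{\theta_0}$ is the unique equilibrium state of $\log J_\theta$; since the normalized potentials $\log J_\theta$ are pairwise distinct, hence pairwise non-cohomologous, the measures $\mu_\theta$ are pairwise distinct, so this equality holds only for $\theta=\theta_0$. As $\theta_0\notin\Theta\setminus B_\delta$, every $\theta\in\Theta\setminus B_\delta$ satisfies $\int\log J_\theta\,d\mu_{\theta_0}<-h(\mu_{\theta_0})\leqslant0$, and continuity of $\theta\mapsto\int\log J_\theta\,d\mu_{\theta_0}$ together with compactness of $\Theta\setminus B_\delta$ make the supremum attained, hence strictly negative. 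The ``moreover'' clause follows since $\Theta\setminus B_\delta$ increases to $\Theta\setminus\{\theta_0\}$ as $\delta\downarrow0$, whence $\sup_{\theta\in\Theta\setminus B_\delta}\int\log J_\theta\,d\mu_{\theta_0}$ increases to $\sup_{\theta\in\Theta}\int\log J_\theta\,d\mu_{\theta_0}=\int\log J_{\theta_0}\,d\mu_{\theta_0}=-h(\mu_{\theta_0})$, using that $\theta_0$ is not isolated in $\Theta$ and the equality case of \eqref{mw} at $\theta_0$. The only delicate point is the uniform-in-$\theta$ passage in the middle step; everything else is a direct application of the Gibbs property and the variational principle.
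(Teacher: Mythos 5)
Your proof is correct, and it takes a somewhat different route from the paper's. The paper works with the ratios $\mu_\theta(C_n(y))/\mu_{\theta_0}(C_n(y))$: using the uniform Gibbs bounds it shows, for each fixed $\theta$ and $\mu_{\theta_0}$-a.e.\ $y$, that $\limsup_n \frac1n\log\frac{\mu_\theta(C_n(y))}{\mu_{\theta_0}(C_n(y))}\leqslant h(\mu_{\theta_0})+\int\log J_\theta\,d\mu_{\theta_0}$, then integrates the resulting exponential bound over $\Theta\setminus B_\delta$ against $\Pi_0$, and finally removes the denominator via the Shannon--McMillan--Breiman convergence $-\frac1n\log\mu_{\theta_0}(C_n(y))\to h(\mu_{\theta_0})$. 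You instead bound the integral directly by $K\sup_{\theta\in\Theta\setminus B_\delta}e^{S_n\log J_\theta(y)}$ and prove a uniform-in-$\theta$ Birkhoff theorem (countable dense subset of $\Theta$ plus equicontinuity, coming from the sup-norm continuity of $\theta\mapsto\log J_\theta$ on the compact set $\Theta$). Your version buys an explicit uniformity in $\theta$ --- one full-measure set of $y$ and one threshold in $n$ valid for all $\theta\in\Theta\setminus B_\delta$ --- whereas in the paper's integration step the a.e.\ set and the rank from which the exponential bound holds depend a priori on $\theta$, so a uniformity (or Fubini-type) argument is being used tacitly; your argument supplies exactly that missing uniformity. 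Both proofs obtain the strict negativity and the limit $\sup_{\theta\in\Theta\setminus B_\delta}\int\log J_\theta\,d\mu_{\theta_0}\to -h(\mu_{\theta_0})$ in the same way, from the variational principle \eqref{mw}, uniqueness of equilibrium states and injectivity of $\theta\mapsto\log J_\theta$ (so that the equality case occurs only at $\theta_0$); your additional observation that the limiting statement uses that $\theta_0$ is not isolated is consistent with the standing assumption that $\Theta$ is a nondegenerate box.
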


\begin{proof}
Recalling the Gibbs property ~\eqref{eq:Gibbs} for $\mu_\theta$ the continuous dependence of the constants $K_\theta$
and compactness of $\Theta$ we conclude that there exist uniform constants $c_1, c_2>0$ so that
\begin{equation}\label{eq:Gibbs2}
{c_1} \leqslant \frac{\mu_\theta(C_n(x))}{e^{-n P_\theta} + S_nf_\theta(x)} \leqslant {c_2}
	\qquad \forall \theta \in \Theta, \forall y\in \Omega, \forall n\geqslant 1.
\end{equation}
%
%
Furthermore, as the potentials are assumed to be normalized then $P_\theta=0$ for every $\theta\in \Theta$.
Therefore, there exists $C_1>0$ and $C_2>0$ , such that, for all $y\in \Omega$, $\theta\in \Theta$ and $n\geqslant 1$
$$C_1< \frac{ \mu_\theta (C_n (y) ) }{ \mu_{\theta_0} (C_n (y) ) } \frac{ e^{ \sum_{j=0}^{n-1} \log J_{\theta_0} (\sigma^j( y))   }}{e^{ \sum_{j=0}^{n-1} \log J_\theta (\sigma^j( y))}  }  <C_2.$$
Then,
\begin{align*}
\limsup_{n \to \infty} \frac{1}{n} \log C_1
	& <  \limsup_{n \to \infty} \frac{1}{n}  \log  \frac{ \mu_\theta (C_n (y) ) }{ \mu_{\theta_0} (C_n (y) ) }   \\
	& + \limsup_{n \to \infty} \frac{1}{n}    \sum_{j=0}^{n-1} \log J_{\theta_0} (\sigma^j( y))   - \sum_{j=0}^{n-1} \log J_\theta (\sigma^j( y))  ] \\
	& < \limsup_{n \to \infty} \frac{1}{n} \log C_2.
\end{align*}
In consequence, using the ergodic theorem and that $h(\mu_{\theta_0} )=\int -\log J_{\theta_0}\, d\mu_{\theta_0}$, one gets
$$ \limsup_{n \to \infty} \frac{1}{n} \log  \frac{ \mu_\theta (C_n (y) ) }{ \mu_{\theta_0} (C_n (y) ) }
	\leqslant h(\mu_{\theta_0} ) + \int \log J_\theta  d \mu_{\theta_0},
	\quad
	\text{for $\mu_{\theta_0}$-a.e. $y$.}
$$
Fix $\zeta>0$ arbitrary and small. The previous expression ensures that, for $\mu_{\theta_0}$-a.e. $y\in \Omega$,
$$  \frac{ \mu_\theta (C_n (y) ) }{ \mu_{\theta_0} (C_n (y) ) }  \leqslant e^{n\, (h(\mu_{\theta_0} ) + \int \log J_\theta  d \mu_{\theta_0}  +\zeta )}
\quad\text{for every large $n\geqslant 1$}
$$
Given a small $\delta>0$, by uniqueness of the equilibrium state for $\log J_\theta$, we have that
$$
\rho_\delta:=h(\mu_{\theta_0} ) + \sup_{\theta \in \Theta \setminus B_\delta}  \int \log J_\theta d \mu_{\theta_0}=
\sup_{\theta \in \Theta \setminus B_\delta} [h(\mu_{\theta_0} ) + \int \log J_\theta d \mu_{\theta_0}] <0,
$$
and that $\rho_\delta$ tends to zero as $\delta\to 0$.
Then, for $\mu_{\theta_0}$-almost every point $y$
$$
 \int_{\Theta \setminus B_\delta}      \frac{ \mu_\theta (C_n (y) ) }{ \mu_{\theta_0} (C_n (y) ) } d \Pi_0 (\theta)
 	\leqslant   \int_{\Theta \setminus B_\delta} e^{n\, (h(\mu_{\theta_0} ) +   \log J_\theta  d \mu_{\theta_0} +\zeta)}
		d \Pi_0 (\theta)
	\leqslant \Pi_0 (\Theta \setminus B_\delta) e^{n (\rho_\delta-\zeta)}
$$
which implies
$$
 \limsup_{n \to \infty}  \frac{1}{n}\,\log  \int_{\Theta \setminus B_\delta}     \frac{ \mu_\theta (C_n (y) ) }{ \mu_{\theta_0} (C_n (y) ) } d \Pi_0 (\theta) \leqslant   \rho_\delta-\zeta.
$$
As $\zeta>0$ was chosen arbitrary we conclude that, for $\mu_{\theta_0}$-almost every point $y$,
$$ \limsup_{n \to \infty}  \frac{1}{n}\,\log  \int_{\Theta \setminus B_\delta}     \mu_\theta (C_n (y) ) d \Pi_0 (\theta)
	\leqslant  - h(\mu_{\theta_0}) + \rho_\delta =
   \sup_{\theta \in \Theta \setminus B_\delta} \int \log J_\theta d \mu_{\theta_0}<0.$$
\end{proof}


\begin{prop} For $\mu_{\theta_0}$-almost every $y\in \Omega$
\begin{equation*} \label{tri9}
0\leqslant -\limsup_{n \to \infty} \frac{1}{n} \log Z_n(y) \,\,
	\leqslant     -  \int_{\Theta} \int_\Omega \log J_\theta(y) d  \mu_{\theta_0}(y)d \Pi_0 (\theta) - h( \mu_{\theta_0}).
\end{equation*}
\end{prop}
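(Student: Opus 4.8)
The plan is to establish the two inequalities separately; the right-hand one is essentially a restatement of the estimate \eqref{conseq:vp} already derived above, while the left-hand one follows from an elementary normalization of $Z_n$ together with the Borel--Cantelli lemma.

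For the right-hand inequality, recall that $Z_n(y)=\int_\Theta \frac{\mu_\theta(C_n(y))}{\mu_{\theta_0}(C_n(y))}\,d\Pi_0(\theta)$ and that, by \eqref{conseq:vp}, for $\mu_{\theta_0}$-almost every $y$ one has
\[
\limsup_{n\to\infty}\frac1n\log Z_n(y)\ \geqslant\ \int_{\Theta}\Big[\,h(\mu_{\theta_0})+\int_\Omega\log J_\theta\,d\mu_{\theta_0}\,\Big]\,d\Pi_0(\theta).
\]
Since $\Pi_0$ is a probability measure and $h(\mu_{\theta_0})$ is independent of $\theta$, the right-hand side above equals $h(\mu_{\theta_0})+\int_\Theta\int_\Omega\log J_\theta\,d\mu_{\theta_0}\,d\Pi_0(\theta)$; multiplying the inequality by $-1$ then yields exactly the claimed upper bound for $-\limsup_n\frac1n\log Z_n(y)$.

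For the left-hand inequality it suffices to prove that $\limsup_{n\to\infty}\frac1n\log Z_n(y)\leqslant 0$ for $\mu_{\theta_0}$-almost every $y$. First I would check that $\int_\Omega Z_n(y)\,d\mu_{\theta_0}(y)=1$ for every $n\geqslant 1$. Since $\mu_{\theta_0}$ is fully supported, every nonempty $n$-cylinder has positive mass, so $y\mapsto \mu_\theta(C_n(y))/\mu_{\theta_0}(C_n(y))$ is well defined, nonnegative, and constant on each $n$-cylinder; hence Tonelli's theorem gives
\[
\int_\Omega Z_n\,d\mu_{\theta_0}=\int_\Theta\int_\Omega\frac{\mu_\theta(C_n(y))}{\mu_{\theta_0}(C_n(y))}\,d\mu_{\theta_0}(y)\,d\Pi_0(\theta)=\int_\Theta\Big(\sum_{C_n}\mu_\theta(C_n)\Big)d\Pi_0(\theta)=1.
\]
Then, for a fixed $\varepsilon>0$, Markov's inequality yields $\mu_{\theta_0}(\{y:Z_n(y)\geqslant e^{n\varepsilon}\})\leqslant e^{-n\varepsilon}$, which is summable in $n$, so by Borel--Cantelli the inequality $Z_n(y)<e^{n\varepsilon}$ holds for all large $n$ and $\mu_{\theta_0}$-almost every $y$; in particular $\limsup_n\frac1n\log Z_n(y)\leqslant\varepsilon$. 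Intersecting the corresponding full-measure sets along a sequence $\varepsilon\downarrow 0$ gives $\limsup_n\frac1n\log Z_n(y)\leqslant 0$, that is $-\limsup_n\frac1n\log Z_n(y)\geqslant 0$, for $\mu_{\theta_0}$-almost every $y$.

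I do not expect a genuine obstacle here: the statement distills the already-established bound \eqref{conseq:vp} on one side, and on the other the observation that each $Z_n$ has constant unit $\mu_{\theta_0}$-mean. The only step requiring minor care is the interchange of the $\theta$- and $y$-integrations together with the per-cylinder summation, which is harmless since the integrand is nonnegative and, by the uniform Gibbs property \eqref{eq:Gibbs2}, bounded.
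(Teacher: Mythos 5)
Your proof is correct. For the right-hand inequality you do exactly what the paper does: the bound is nothing more than the estimate \eqref{conseq:vp} (Jensen plus the relative-entropy identities \eqref{Chacha}--\eqref{chazo}), rewritten after pulling the constant $h(\mu_{\theta_0})$ out of the $\Pi_0$-integral and multiplying by $-1$. The paper's proof stops there and treats the left-hand inequality $\limsup_n \frac1n\log Z_n(y)\leqslant 0$ as immediate; you supply an explicit argument for it, namely that $\int_\Omega Z_n\,d\mu_{\theta_0}=1$ (Tonelli and the fact that $y\mapsto \mu_\theta(C_n(y))/\mu_{\theta_0}(C_n(y))$ is constant on $n$-cylinders, which sum to full $\mu_\theta$-mass), followed by Markov's inequality and Borel--Cantelli with $\varepsilon\downarrow 0$. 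That unit-mean argument is clean, standard, and avoids any appeal to the Gibbs structure beyond full support of $\mu_{\theta_0}$, so it is a legitimate (and slightly more self-contained) way to justify the nonnegativity that the paper leaves implicit; the substantive content of the proposition is handled the same way in both.
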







The statement of the second inequality of this Proposition  is nothing more than the expression (\ref{conseq:vp}).

\smallskip

\section{Proof of the main results}\label{sec:proofs}

\subsection{Proof of Theorem~\ref{thm:main}}

We proceed to show that the \emph{a posteriori} measures in Theorem~\ref{thm:main} do converge, for  $\mu_{\theta_0}$-typical points $y$. In order to prove that $\Pi_n(\cdot,y) \to \delta_{\theta_0}$ (in the weak$^*$ topology) it is sufficient
to prove that, for every $\delta>0$, one has that  $\Pi_n(\Theta\setminus B_\delta,y) \to 0$ as $n\to\infty$. This is the content of the following theorem.
\medskip

\begin{thm}  \label{mmai}
Let $\Pi_n (\cdot \mid y)$ be the a posteriori measures defined by ~\eqref{frt} and let
$B_\delta$ be the $\delta$-neighborhood of $\theta_0$ defined by ~\eqref{def:ont0}.
Then,  for every small $\delta>0$ and  $\mu_{\theta_0}$-a.e. $y$,
\begin{equation} \label{espe}
\Pi_n (B_\delta \mid y)=\frac{\int_{B_\delta} \mu_{\theta}  \,(\,C_n (y))\, d \Pi_0 (\theta) \,}{\int_\Theta \mu_{\theta}(C_n (y)\,)d \Pi_0 (\theta) \, } \to 1
\end{equation}
exponentially fast as $n \to \infty.$


\end{thm}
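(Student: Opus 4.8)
The plan is to prove the equivalent statement that $\Pi_n(\Theta\setminus B_\delta\mid y)=1-\Pi_n(B_\delta\mid y)$ decays exponentially for $\mu_{\theta_0}$-a.e. $y$. Writing the \emph{a posteriori} measures in the normalized form \eqref{frt1} and bounding the denominator from below by restricting its domain of integration to a smaller ball $B_{\delta'}$ (with $0<\delta'<\delta$ to be fixed at the end, depending only on $\delta$), one has
\begin{equation*}
\Pi_n(\Theta\setminus B_\delta\mid y)
=\frac{\displaystyle\int_{\Theta\setminus B_\delta}\frac{\mu_\theta(C_n(y))}{\mu_{\theta_0}(C_n(y))}\,d\Pi_0(\theta)}
      {\displaystyle\int_{\Theta}\frac{\mu_\theta(C_n(y))}{\mu_{\theta_0}(C_n(y))}\,d\Pi_0(\theta)}
\;\leqslant\;
\frac{\displaystyle\int_{\Theta\setminus B_\delta}\frac{\mu_\theta(C_n(y))}{\mu_{\theta_0}(C_n(y))}\,d\Pi_0(\theta)}
      {\displaystyle\int_{B_{\delta'}}\frac{\mu_\theta(C_n(y))}{\mu_{\theta_0}(C_n(y))}\,d\Pi_0(\theta)}.
\end{equation*}
It then remains to control the numerator from above and the denominator from below, with exponential rates of strictly opposite sign.

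For the numerator I would combine Lemma~\ref{ufa} with the Shannon--McMillan--Breiman formula $\frac1n\log\mu_{\theta_0}(C_n(y))\to-h(\mu_{\theta_0})$ (valid for $\mu_{\theta_0}$-a.e. $y$), which yields, for $\mu_{\theta_0}$-a.e. $y$,
\begin{equation*}
\limsup_{n\to\infty}\frac1n\log\int_{\Theta\setminus B_\delta}\frac{\mu_\theta(C_n(y))}{\mu_{\theta_0}(C_n(y))}\,d\Pi_0(\theta)
\;\leqslant\; h(\mu_{\theta_0})+\sup_{\theta\in\Theta\setminus B_\delta}\int\log J_\theta\,d\mu_{\theta_0}\;=:\;\rho_\delta\;<\;0,
\end{equation*}
the strict negativity being exactly what is proved inside Lemma~\ref{ufa} (uniqueness of the equilibrium state of $\log J_{\theta_0}$, distinctness of the Gibbs measures $\mu_\theta$ for distinct $\theta$, and compactness of $\Theta\setminus B_\delta$). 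Once $\delta$ is fixed, the number $\rho_\delta<0$ is frozen.

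For the denominator I would apply Corollary~\ref{rer3}, specifically inequality \eqref{ss} with the ball $B_{\delta'}$, obtaining for $\mu_{\theta_0}$-a.e. $y$ that $\liminf_{n\to\infty}\frac1n\log\int_{B_{\delta'}}\frac{\mu_\theta(C_n(y))}{\mu_{\theta_0}(C_n(y))}\,d\Pi_0(\theta)\geqslant -d_{\delta'}$, where $d_{\delta'}>0$ is the constant from Proposition~\ref{rera}. The key point, recorded in Remark~\ref{rema}, is that $d_{\delta'}$ can be taken arbitrarily small by shrinking $\delta'$, because ${\mathcal F}(\mu_{\theta_0},\Psi^{B_{\delta'}})=\int_{B_{\delta'}}h(\mu_{\theta_0}\mid\mu_\theta)\,d\Pi_0(\theta)\to0$ as $\delta'\to0$; here Hypothesis~(A) enters, since it forces $\Pi_0(B_{\delta'})\to0$. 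Fixing $\delta'$ so small that $d_{\delta'}\leqslant-\rho_\delta/2$ and intersecting the two full-measure sets of $y$, dividing the two estimates gives
\begin{equation*}
\limsup_{n\to\infty}\frac1n\log\Pi_n(\Theta\setminus B_\delta\mid y)\;\leqslant\;\rho_\delta+d_{\delta'}\;\leqslant\;\tfrac12\rho_\delta\;<\;0,
\end{equation*}
so that $\Pi_n(\Theta\setminus B_\delta\mid y)\leqslant e^{-c_\delta n}$ for all large $n$ with $c_\delta:=-\tfrac14\rho_\delta>0$, whence $\Pi_n(B_\delta\mid y)\geqslant 1-e^{-c_\delta n}$, which is \eqref{espe} with the claimed exponential speed.

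The hard part is ensuring that the final exponent $\rho_\delta+d_{\delta'}$ is negative: both the numerator's decay rate $|\rho_\delta|$ and the denominator's lower-bound rate $d_{\delta'}$ degenerate to $0$ as their respective neighborhoods shrink, so one cannot use the same ball in the numerator and the denominator. Decoupling the two scales is what rescues the argument --- $\rho_\delta$ is frozen once $\delta$ is chosen, whereas $d_{\delta'}$ is driven to zero by shrinking the auxiliary radius $\delta'$, which is legitimate precisely because Hypothesis~(A) makes the relative-entropy average $\int_{B_{\delta'}}h(\mu_{\theta_0}\mid\mu_\theta)\,d\Pi_0(\theta)$ small. (Consistently, $c_\delta\to0$ as $\delta\to0$, matching the statement.) The remaining bookkeeping --- matching the full-measure exceptional sets coming from the ergodic theorem for the numerator and from the Borel--Cantelli step inside Corollary~\ref{rer3} for the denominator --- is routine.
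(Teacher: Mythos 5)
Your proof is correct, and it assembles the same ingredients as the paper's proof (Lemma~\ref{ufa} for the mass of $\Theta\setminus B_\delta$, and Proposition~\ref{rera}--Corollary~\ref{rer3}, i.e.\ the large deviations plus Borel--Cantelli step, for the mass near $\theta_0$), but it closes the estimate by a genuinely different device. The paper keeps a single radius: it lower-bounds the denominator over the \emph{same} ball $B_\delta$ through the unnormalized estimate \eqref{sis}, whose rate $-h(\mu_{\theta_0})\,\Pi_0(B_\delta)-d_\delta$ tends to $0$ as $\delta\to0$, and obtains a negative final exponent from inequality \eqref{now}, i.e.\ from the gap between that quantity and $\sup_{\theta\in\Theta\setminus B_\delta}\int\log J_\theta\,d\mu_{\theta_0}\to -h(\mu_{\theta_0})$; this requires fixing $0<\zeta<h(\mu_{\theta_0})/2$ and implicitly uses $h(\mu_{\theta_0})>0$. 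You instead stay with the ratio form \eqref{frt1}, freeze the outer rate $\rho_\delta<0$ coming from Lemma~\ref{ufa} plus Shannon--McMillan--Breiman, and then shrink a second radius $\delta'$ until $d_{\delta'}\le|\rho_\delta|/2$, invoking only \eqref{ss} and Remark~\ref{rema}. What your route buys: it never needs \eqref{sis} (nor the extra Jensen manipulation behind it), it needs no auxiliary $\zeta$ and no positivity of the entropy, at the modest cost of the auxiliary scale $\delta'$. Your parenthetical claim that one \emph{cannot} use the same ball is slightly overstated: the paper does use a single ball, the point being that the factor $\Pi_0(B_\delta)$ in \eqref{sis} makes the same-$\delta$ comparison \eqref{now} hold for small $\delta$; what is true is that with the ratio-form bounds you use ($\rho_\delta$ against $d_\delta$) the single-ball comparison would indeed be unclear, and your two-scale choice is exactly what repairs that.
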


\begin{proof}
Fix $\delta>0$ small. We claim that $\Pi_n (\Theta \setminus B_\delta \mid y) $ tends to zero exponentially fast
as $n \to \infty$.
We have to estimate
$$ \limsup_{n \to \infty} \frac{1}{n} \log \,\int_{B_\delta} \mu_\theta (C_n (y) ) d \Pi_0 (\theta)$$
and
$$- \limsup_{n \to \infty} \frac{1}{n} \log \int_{\Theta \setminus B_\theta} \mu_\theta (C_n (y) ) d \Pi_0 (\theta).$$
From (\ref{sis}), for $\mu_{\theta_0}$ almost every point $y$
\begin{equation} \label{soso0}   \limsup_{n \to \infty} \frac{1}{n} \log\,\,
\int_{B_\delta} \,  \mu_{\theta}  (C_n (y))\, d \Pi_0 (\theta)\,\, \geqslant - h(\mu_{\theta_0})\,  \Pi_0 (B_\delta)- d_\delta,
\end{equation}
where $d_\delta$ can be taken small if $\delta>0$ is small. Fix $0<\zeta<\frac{h(\mu_{\theta_0})}2$.
Therefore, from Remark~\ref{wer} 
we get that for $\mu_{\theta_0}$ almost every point $y$
\begin{equation} \label{suspi}
 \int_{B_\delta} \,  \mu_{\theta}  (\,C_{n} (y)\, )\, d \Pi_0 (\theta) \geqslant   e^{- [d_\delta + \Pi_0 (B_\delta)\,\, h(\mu_{\theta_0})-\zeta] \,{n}} \quad \text{for every large $n\geqslant 1$.}
\end{equation}
%
Observe that the map
$\delta\mapsto \sup_{\theta \in \Theta \setminus B_\delta} \int \log J_\theta d \mu_{\theta_0}$ is monotone increasing and recall that $\sup_{\theta \in \Theta \setminus B_\delta} \int \log J_\theta d \mu_{\theta_0} \to - h(\mu_{\theta_0})$
as $\delta\to 0$. On the other hand, $- h(\mu_{\theta_0})\,  \Pi_0 (B_\delta)- d_\delta$ tends to zero as $\delta \to 0$
(cf. Remark~\ref{wer}). Thus,
\begin{equation} \label{now}
   \sup_{\theta \in \Theta \setminus B_\delta} \int \log J_\theta d \mu_{\theta_0}	
   	< - h(\mu_{\theta_0})\,  \Pi_0 (B_\delta)- d_\delta -\zeta
\end{equation}
for every small $\delta>0$.
As $$ \frac{\int_{B_\delta} \mu_{\theta}  \,(\,C_n (y))\, d \Pi_0 (\theta) \,}{\int_\Theta \mu_{\theta}(C_n (y)\,)d \Pi_0 (\theta) \, }   +
 \frac{\int_{\Theta \setminus B_\delta} \mu_{\theta}  \,(\,C_n (y)\,d \Pi_0 (\theta)  \,}{\int_ \Theta \mu_{\theta}(C_n (y)\,d \Pi_0 (\theta) \, }   =1$$
 we just have to show that
 $$\frac{\int_{B_\delta} \mu_{\theta}  \,(\,C_n (y))\, d \Pi_0 (\theta) \,}{\int_{\Theta \setminus B_\delta} \mu_{\theta}(C_n (y)\,)d \Pi_0 (\theta) \, } \to \infty,  $$
 when $n \to \infty.$
Now, equations \eqref{lok} and \eqref{suspi} and the choice of $\delta$ in (\ref{now}) ensure that, for $\mu_{\theta_0}$-almost every $y\in \Omega$,
 $$
 \frac{\int_{B_\delta} \mu_{\theta}  \,(\,C_n (y))\, d \Pi_0 (\theta) \,}{\int_{\Theta \setminus B_\delta} \mu_{\theta}(C_n (y)\,)d \Pi_0 (\theta) \, }
 	\geqslant \frac{ e^{ - [h(\mu_{\theta_0})\,  \Pi_0 (B_\delta)+ d_\delta+\zeta]\, \,n}}{
		 e^{ n\, \sup_{\theta \in \Theta \setminus B_\delta} \int \log J_\theta d \mu_{\theta_0}} }
$$
 which tends to infinity as $n\to\infty$. Finally the previous expression also ensures that
 \begin{equation*}
| \Pi_n (B_\delta \mid y)-1 |
	=\frac{\int_{\Theta\setminus B_\delta} \mu_{\theta}  \,(\,C_n (y))\, d \Pi_0 (\theta) \,}{\int_\Theta \mu_{\theta}(C_n (y)\,)d \Pi_0 (\theta) \, }
	\leqslant
		 e^{ n\,[ \sup_{\theta \in \Theta \setminus B_\delta} \int \log J_\theta d \mu_{\theta_0} + h(\mu_{\theta_0})\,  \Pi_0 (B_\delta)+ d_\delta+\zeta] }
\end{equation*}
decreases exponentially fast with exponential rate that can be taken uniform for all small $\delta>0$.
  This finishes the proof of the theorem.
 \end{proof}


\subsection{Proof of Theorem~\ref{thm:main2}}

By assumption, there exists a full $\nu$-measure subset $Y'\subset Y$ so that
the limit
	$$
	\Gamma^y(\theta):=\lim_{n\to\infty} \frac1n \log \int_\Omega e^{\varphi_n(\theta, x,y)} \, d\mu_\theta(x)
	$$
exists for every $y\in Y'$. Given an arbitrary $y\in Y'$ we proceed to estimate the asymptotic behavior of
the \emph{a posteriori} measures $\Pi_n (\cdot \mid y)$ given by ~\eqref{frtnew2}.

Given $\delta>0$, by upper semicontinuity of $\Gamma^y(\cdot)$ the function $\Gamma^y$ has a maximum value
and there exists $d_\delta>0$ (which may be chosen to converge to zero as $\delta\to 0$) so that
$$
B_\delta^y=\big\{\theta \in \Theta \colon d_\Theta\big(\,\theta, \text{argmax} \, \Gamma^y\,\big)> \delta \big\}
	\subset (\Gamma^y)^{-1}(( -\infty, \alpha^y -d_\delta))
$$
is non-empty and open subset, where $\alpha^y:=\max_{\theta\in \Theta} \Gamma^y(\theta)$.

There are two cases to consider. On the one hand, if $\Gamma^y(\cdot)\equiv \alpha^y$ is constant
then $B_\delta^y=\Theta $ and we conclude that $\Pi_n (B_\delta^y \mid y)  = 1$ for all $n\geqslant 1$
and the convergence in ~\eqref{eq:thm:main2} is trivially satisfied.
On the other hand, as $\Pi_0$ is fully supported and absolutely continuous then $\int_\Theta \Gamma^y(\theta) \, d\Pi_0(\theta)<\alpha^y$.
Actually, this allows to estimate the double integral
$$
\int_{\Theta\setminus B^y_\delta} \int_\Omega e^{\varphi_n(\theta, x,y)} \, d\mu_\theta(x) \, d\Pi_0(\theta)
$$
without making use of the features of the set $B^y_\delta$. More precisely, using Jensen inequality and taking the limsup under the sign of the integral,
\begin{align*}
\limsup_{n\to\infty} \frac1n \log \int_{\Theta\setminus B^y_\delta} \int_\Omega e^{\varphi_n(\theta, x,y)} \, d\mu_\theta(x) \, d\Pi_0(\theta)
	& \leqslant  \int_{\Theta\setminus B^y_\delta} \limsup_{n\to\infty} \frac1n \log \int_\Omega e^{\varphi_n(\theta, x,y)} \, d\mu_\theta(x) \, d\Pi_0(\theta) \\
	& =  \int_{\Theta\setminus B^y_\delta} \Gamma^y(\theta)  \, d\Pi_0(\theta).
\end{align*}
As $\varphi_n$ are assumed non-negative we conclude that $\Gamma^y(\cdot)$ is a non-negative function and
\begin{align}\label{eq:keyl}
\limsup_{n\to\infty} \frac1n \log \int_{\Theta} \int_\Omega e^{\varphi_n(\theta, x,y)} \, d\mu_\theta(x) \, d\Pi_0(\theta)
	& \leqslant \int_{\Theta} \Gamma^y(\theta)  \, d\Pi_0(\theta)<\alpha^y.
\end{align}
In consequence, if $0<\zeta<\frac12\big[\alpha^y-\int_{\Theta} \Gamma^y(\theta)  \, d\Pi_0(\theta)\big]$ then
$$
\int_{\Theta} \int_\Omega e^{\varphi_n(\theta, x,y)} \, d\mu_\theta(x) \, d\Pi_0(\theta) \leqslant e^{(\alpha^y-\zeta)n}
$$
for every large $n\geqslant 1$. Now, in order to estimate the measures $\Pi_n (\cdot \mid y)$ on the nested family $(B_\delta^y)_{\delta>0}$ we observe that
$
\int_\Omega e^{\varphi_n(\theta, x,y)} \, d\mu_\theta(x) \geqslant e^{(\alpha^y-d_\delta)n},
	\; \forall
	\theta \in B_\delta^y,
$
thus
$$
\int_{B^y_\delta} \int_\Omega e^{\varphi_n(\theta, x,y)} \, d\mu_\theta(x) \, d\Pi_0(\theta)
	\geqslant e^{(\alpha^y-d_\delta)n} \Pi_0(B^y_\delta)
$$
for every large $n\geqslant 1$. In particular, if $\delta>0$ is small so that $0<d_\delta<\zeta$, putting together the last expression, inequality \eqref{eq:keyl} and the fact that $0<\Pi_0(B_\delta^y)<1$, one concludes that
\begin{align*}
\Pi_n (\Theta\setminus B_\delta^y \mid y)
	& \,=\, \frac{ \int_{\Theta\setminus B_\delta^y}  \int_\Omega e^{\varphi_n(\theta,x,y)} \, d\mu_\theta(x) \, d \Pi_0 (\theta)}{ \int_\Theta  \int_\Omega e^{\varphi_n(\theta,x,y)} \, d\mu_\theta(x) \, d \Pi_0 (\theta)} \\
	& \leqslant \frac{ \int_{\Theta\setminus B_\delta^y}  \int_\Omega e^{\varphi_n(\theta,x,y)} \, d\mu_\theta(x) \, d \Pi_0 (\theta)}{ \int_{B^y_\delta}  \int_\Omega e^{\varphi_n(\theta,x,y)} \, d\mu_\theta(x) \, d \Pi_0 (\theta)} \\
	& \leqslant \frac1{\Pi_0(B^y_\delta)} e^{-(\zeta-d_\delta)n}
\end{align*}
tends exponentially fast to zero, as claimed.
Hence, any accumulation point of $(\Pi_n(\cdot\mid y))_{n\geqslant 1}$ (in the weak$^*$ topology) is supported on
the compact set $\text{argmax} \, \Gamma^y$, which proves the first statement in the theorem. As the second assertion is immediate from the first one, this concludes the proof of the theorem.
\hfill $\square$

\subsection{Proof of Theorem~\ref{thm:main3}}

Consider the family of loss functions $\ell_n :\Theta \times X \times Y \to \mathbb R$ defined  by ~\eqref{eq:lossphi}
associated to an almost additive sequence $\Phi=(\varphi_n)_{n\geqslant 1}$
of continuous and non-negative observables $\varphi_n :\Theta \times X \times Y \to \mathbb R_+$
satisfying assumptions (H1)-(H2).
\begin{enumerate}
\item[(H1)] for each $\theta\in \Theta$ and $x\in X$ there exists a constant $K_{\theta,x}>0$ so that, for every $y\in Y$,
\begin{equation*}
\varphi_n(\theta,x,y) + \varphi_m(\theta,x,T^n(y)) - K_{\theta,x}
	\leqslant \varphi_{m+n}(\theta,x,y)
	\leqslant \varphi_n(\theta,x,y) + \varphi_m(\theta,x,T^n(y)) + K_{\theta,x}
\end{equation*}
\item[(H2)] $\int K_{\theta,x} d\mu_{\theta}(x)<\infty$ for every $\theta\in \Theta$.
\end{enumerate}
The \emph{a posteriori}  measures are
\begin{equation}\label{def:intpin}
\Pi_n (E \mid y)\,=\, \frac{ \int_E  \psi_n(\theta,y)\, d \Pi_0 (\theta)}{ \int_\Theta  \psi_n(\theta,y) \, d \Pi_0 (\theta)},
\end{equation}
where the sequence $\psi_n(\theta,y)=\int_\Omega \varphi_n(\theta,x,y) \, d\mu_\theta(x)$ is almost additive in the $y$-variable. Indeed, this family satisfies
$$
\psi_{n}(\theta,y) + \psi_{m}(\theta, T^n(y)) - \int K_{\theta,x} d\mu_{\theta}(x)
	\leqslant \psi_{m+n}(\theta,y)
	\leqslant  \psi_{n}(\theta,y) + \psi_{m}(\theta, T^n(y)) + \int K_{\theta,x} d\mu_{\theta}(x)
$$
for every $m,n\geqslant 1$, every $\theta\in \Omega$ and $y\in Y$.
Now, for each fixed $\theta\in \Theta$, we note that the sequence of observables
$$\Big(\psi_n(\theta,\cdot) + \int K_{\theta,x} d\mu_{\theta}(x)\Big)_{n\geqslant 1}$$
is subadditive. Hence, Kingman's subadditive ergodic theorem ensures that
the limit
$\lim_{n\to\infty} \frac{\psi_n(\theta, y)}n$
does exist and is $\nu$-almost everywhere constant to the non-negative function
$\psi_*(\theta):= \inf_{n\geqslant 1} \frac1n \int {\psi_n(\theta, y)}\, d\nu(y)$. The function $\psi_*$ is measurable and integrable, because it satisfies $0\leqslant \psi_*\leqslant \psi_1$.
Thus, taking the limit under the sign of the integral and noticing that the denominator is a normalizing term we conclude that
\begin{equation}\label{def:intpi}
\lim_{n\to\infty}\Pi_n (E \mid y) \,=\, \frac{ \int_E \psi_*(\theta)\, d \Pi_0 (\theta)}{ \int_\Theta \psi_*(\theta)\, d \Pi_0 (\theta)}
	\,=\, \frac{ \int 1_E \psi_*(\theta)\, d \Pi_0 (\theta)}{ \int_\Theta \psi_*(\theta)\, d \Pi_0 (\theta)}
\end{equation}
for every measurable subset $E\subset \Theta$. This proves the first statement of the theorem.

\medskip
We proceed to prove the level-1 large deviations estimates on the convergence of the \emph{a posteriori} measures $\Pi_n (\cdot \mid y)$ to $\Pi_*$, whenever $T$ is a subshift of finite type and $\nu$ is a Gibbs measure associated to a Lipschitz continuous potential $\varphi$. We will make use of the following instrumental lemma, whose proof is left as a simple exercise to the reader.

\begin{lemma}\label{leaux}
Given arbitrary functions $A,B: \Omega \to\mathbb R_+$ and constants $a,b,\delta>0$
and $0<\xi<b$, the following holds:
\begin{align*}
\Big\{\Big| \frac{A(y)}{B(y)} - \frac{a}{b}\Big|>\delta \Big\}
	\subset  S_1  \;\cup\; S_2  \;\cup\; S_3
\end{align*}
where
$
S_1= \Big\{\Big| {B(y)} - {b}\Big|>\xi \Big\},
\quad
S_2= \Big\{\frac1{b-\xi}\Big| {A(y)} - {a}\Big|>\frac\delta2 \Big\}
$
and
$
S_3= \Big\{\frac{a}{b(b-\xi)}\Big| {B(y)} - {b}\Big|>\frac\delta2 \Big\}.
$
\end{lemma}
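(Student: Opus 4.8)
The plan is to prove the stated inclusion by contraposition: instead of reasoning directly about the left-hand set, I would show that any $y$ lying in the complement of $S_1\cup S_2\cup S_3$ — that is, outside each of $S_1$, $S_2$ and $S_3$ simultaneously — satisfies $\big|A(y)/B(y)-a/b\big|\leqslant\delta$, which is precisely the complement of the set on the left. The first step is therefore to transcribe the three defining conditions coming from $y\notin S_1$, $y\notin S_2$ and $y\notin S_3$, namely $|B(y)-b|\leqslant\xi$, $|A(y)-a|\leqslant\tfrac{\delta}{2}(b-\xi)$ and $\tfrac{a}{b(b-\xi)}|B(y)-b|\leqslant\tfrac{\delta}{2}$.

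The key preliminary observation is that $y\notin S_1$, combined with the hypothesis $0<\xi<b$, forces $B(y)\geqslant b-\xi>0$. This is what makes the quotient $A(y)/B(y)$ well defined and, more importantly, supplies the uniform lower bound $b\,B(y)\geqslant b(b-\xi)>0$ on the denominator used below. It is precisely for this reason that the threshold in $S_1$ is taken to be $\xi$ with $0<\xi<b$.

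Next I would perform the standard triangle-inequality decomposition. Writing $bA(y)-aB(y)=b\,(A(y)-a)-a\,(B(y)-b)$ and dividing through by $b\,B(y)$ gives
$$\Big|\frac{A(y)}{B(y)}-\frac{a}{b}\Big|=\frac{\big|\,b\,(A(y)-a)-a\,(B(y)-b)\,\big|}{b\,B(y)}\leqslant\frac{|A(y)-a|}{B(y)}+\frac{a\,|B(y)-b|}{b\,B(y)}.$$
Replacing $B(y)$ by its lower bound $b-\xi$ in the two denominators, the right-hand side is at most $\tfrac{|A(y)-a|}{b-\xi}+\tfrac{a\,|B(y)-b|}{b(b-\xi)}$, and the two remaining inequalities (from $y\notin S_2$ and $y\notin S_3$) bound these two summands by $\delta/2$ each. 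Adding them yields the desired estimate $\leqslant\delta$, completing the contrapositive.

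As for difficulties, there are essentially none of substance: the lemma is an elementary quantitative stability estimate for the map $(A,B)\mapsto A/B$ near $(a,b)$. The only point deserving care is the bookkeeping, namely checking that the lower bound $b-\xi$ extracted from $S_1$ matches exactly the denominators appearing in the definitions of $S_2$ and $S_3$, so that the two error terms split cleanly as $\delta/2+\delta/2$. The three sets are designed so that this split is exact, and once the decomposition above is written down the conclusion is immediate.
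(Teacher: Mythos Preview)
Your proof is correct. The paper itself does not give a proof of this lemma, stating only that it ``is left as a simple exercise to the reader,'' so there is nothing to compare against; your contrapositive argument with the triangle-inequality decomposition is exactly the kind of verification the authors have in mind.
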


\medskip
Let us return to the proof of the large deviation estimates. Given $g\in C(\Theta ,\mathbb R)$ it is not hard to check using \eqref{def:intpin} and
~\eqref{def:intpi} that
\begin{equation}\label{eq.ld1}
\int g\, d\Pi_n(\cdot\mid y)= \frac{ \int g(\theta) \frac{\psi_n(\theta,y)}n\, d \Pi_0 (\theta)}{ \int_\Theta  \frac{\psi_n(\theta,y)}n \, d \Pi_0 (\theta)}
	\quad\text{and}\quad
	\int g\, d\Pi_*= \frac{ \int g(\theta) \psi_*(\theta)\, d \Pi_0 (\theta)}{ \int_\Theta \psi_*(\theta)\, d \Pi_0 (\theta)}.
\end{equation}
Fix $\delta>0$. In order to provide an upper bound for
$$
\limsup_{n \to \infty} \frac{1}{n}\,\,  \log \,\nu ( \{\,y \in \Omega :
	\Big|\int g\, d\Pi_n (\cdot \mid y) - \int g\, d\Pi_*\Big|>\delta  \})
$$
we will estimate the set $\Big\{\Big|\int g\, d\Pi_n (\cdot \mid y) - \int g\, d\Pi_*\Big|>\delta\Big\}$ as in Lemma~\ref{leaux}.
For that purpose, fix $0<\xi <\min_{\theta\in\Theta} \psi_*(\theta)$.
%
For each fixed $\theta\in \Theta$ the family $\Psi^\theta:=(\psi_n(\theta,\cdot))_n$ is almost-additive. Hence Theorem~\ref{thm.deviations} implies that
$$
\limsup_{n \to \infty} \frac{1}{n}\,\,  \log \,\nu ( \{\,y \in \Omega :
	\Big| \frac{\psi_n(\theta,y)}n - \psi_*(\theta)\Big|\geqslant \xi  \})
	\leqslant \sup_{\mathcal P^1_{\theta,\xi,\delta}} \Big\{-P(\sigma,\varphi)+h_\eta(\sigma) + \int \varphi\, d\eta \Big\}
$$
where $\mathcal P^1_{\theta,\xi,\delta}\subset {\mathcal M}_\sigma(\Omega)$ is the space of invariant probability measures $\eta$ such that
$|{\mathcal F}(\eta,\Psi^\theta) -\psi_*(\theta)|\geqslant \xi$.
In consequence,
\begin{align}
\limsup_{n \to \infty} & \frac{1}{n} \log \,\nu \Big( \Big\{\,y \in \Omega :
	\Big| \int_\Theta  \frac{\psi_n(\theta,y)}n \, d \Pi_0 (\theta) - \int_\Theta \psi_*(\theta)\, d \Pi_0 (\theta)\Big|\geqslant\xi  \Big\}\Big)
	\nonumber \\
		& \leqslant \limsup_{n \to \infty}  \frac{1}{n}\,\,  \log \,\nu \Big( \Big\{\,y \in \Omega :
	\int_\Theta  \Big|  \frac{\psi_n(\theta,y)}n  - \psi_*(\theta)\,\Big|\,  d \Pi_0 (\theta)\geqslant\xi  \Big\}\Big)  \nonumber  \\
			& \leqslant \limsup_{n \to \infty}  \frac{1}{n}\,\,  \log \,\nu \Big( \Big\{\,y \in \Omega :
	  \Big|  \frac{\psi_n(\theta,y)}n  - \psi_*(\theta)\,\Big|\, \geqslant\xi, \; \text{for some}\; \theta\in \Theta \Big\}\Big)  \nonumber \\
			& \leqslant
	\sup_{\theta\in \Theta} \sup_{\mathcal P^1_{\theta,\xi,\delta}} \Big\{-P(\sigma,\varphi)+h_\eta(\sigma) + \int \varphi\, d\eta \Big\}.
	\label{conclusao1}
\end{align}
Analogously,
\begin{align}
\limsup_{n \to \infty} & \frac{1}{n}  \log \,\nu \Big( \Big\{\,y \in \Omega :
	\frac1{\int_\Theta \psi_*(\theta)\, d \Pi_0 (\theta)-\xi} \Big| \int g(\theta) \frac{\psi_n(\theta,y)}n\, d \Pi_0 (\theta) - \int_\Theta g(\theta) \psi_*(\theta)\, d \Pi_0 (\theta)\Big|\geqslant\frac\delta2  \Big\}\Big)
	\nonumber \\
& \leqslant \limsup_{n \to \infty}  \frac{1}{n}  \log \,\nu \Big( \Big\{\,y \in \Omega :
	 \Big| \int \frac{\psi_n(\theta,y)}n\, d \Pi_0 (\theta) - \int_\Theta \psi_*(\theta)\, d \Pi_0 (\theta)\Big|
	 	\geqslant\frac{\int_\Theta \psi_*(\theta)\, d \Pi_0 (\theta)-\xi}{2\|g\|_\infty} \delta   \Big\}\Big)
	\nonumber \\
			& \leqslant
	\sup_{\theta\in \Theta} \sup_{\mathcal P^2_{\theta,\xi,\delta}} \Big\{-P(\sigma,\varphi)+h_\eta(\sigma) + \int \varphi\, d\eta \Big\},
	\label{conclusao2}
\end{align}
where $\eta\in \mathcal P^2_{\theta,\xi,\delta} \subset {\mathcal M}_\sigma(\Omega)$ if and only if
$|{\mathcal F}(\eta,\Psi^\theta) -\psi_*(\theta)|\geqslant \frac{\int_\Theta \psi_*(\theta)\, d \Pi_0 (\theta)-\xi}{2\|g\|_\infty} \delta$.
The third term in the decomposition of Lemma~\ref{leaux} is identical to the estimate of ~\eqref{conclusao1} and we have
\begin{align}
\limsup_{n \to \infty} & \frac{1}{n} \log \,\nu \Big( \Big\{\,y \in \Omega :
	\Big| \int_\Theta  \frac{\psi_n(\theta,y)}n \, d \Pi_0 (\theta) - \int_\Theta \psi_*(\theta)\, d \Pi_0 (\theta)\Big|
		\geqslant\frac{(\int_\Theta \psi_*(\theta)\, d \Pi_0 (\theta)-\xi)^2}{ 2\int_\Theta \psi_*(\theta)\, d \Pi_0 (\theta)} \delta  \Big\}\Big)
	\nonumber \\
			& \leqslant
	\sup_{\theta\in \Theta} \sup_{\mathcal P^3_{\theta,\xi,\delta}}
		\Big\{-P(\sigma,\varphi)+h_\eta(\sigma) + \int \varphi\, d\eta \Big\},
	\label{conclusao3}
\end{align}
where $\eta\in \mathcal P^3_{\theta,\xi,\delta} \subset {\mathcal M}_\sigma(\Omega)$ if and only if
$|{\mathcal F}(\eta,\Psi^\theta) -\psi_*(\theta)|\geqslant \frac{(\int_\Theta \psi_*(\theta)\, d \Pi_0 (\theta)-\xi)^2}{ 2\int_\Theta \psi_*(\theta)\, d \Pi_0 (\theta)} \delta$.
Altogether, if $0<\delta<1$ and $\xi=\delta \cdot \min\{ \inf_{\theta\in\Theta}\psi_*(\theta),\int_\Theta \psi_*(\theta)\, d \Pi_0 (\theta)\} >0$, estimates \eqref{conclusao1}-\eqref{conclusao3} imply that there exists $c>0$ so that

\begin{align*}
\limsup_{n \to \infty} \frac{1}{n}\,\,  & \log \, \nu \Big( \Big\{\,y \in \Omega :
	\Big|\int g\, d\Pi_n (\cdot \mid y) - \int g\, d\Pi_*\Big|\geqslant\delta  \Big\}\Big) \\
			& \leqslant
		\sup_{\theta\in \Theta}
		\max_{1\leqslant i \leqslant 3}
		\sup_{\mathcal P^i_{\theta,\xi,\delta}}
			\Big\{-P(\sigma,\varphi)+h_\eta(\sigma) + \int \varphi\, d\eta \Big\} \\
			& \le  \sup_{\theta\in \Theta}  \;
			 \sup_{\{\eta\colon |{\mathcal F}(\eta,\Psi^\theta) -\psi_*(\theta)|\geqslant c\delta\}} 					
			 	\Big\{-P(\sigma,\varphi)+h_\eta(\sigma) + \int \varphi\, d\eta \Big\}		
\end{align*}
Finally, it remains  to guarantee that the right hand-side above is strictly negative. Notice that as
${\mathcal F}(\nu,\Psi^\theta)=\psi_*(\theta)$, the uniqueness of the equilibrium state (which is an invariant Gibbs measure) for the potential $\varphi$ and the continuity of the map $\eta\mapsto {\mathcal F}(\eta,\Psi^\theta)$ imply that
the set
$\mathcal B_\theta(\delta):=\{\eta\in \mathcal M_\sigma(\Omega)\colon |{\mathcal F}(\eta,\Psi^\theta) -\psi_*(\theta)|  \geqslant c\delta\}$
is compact and disjoint from $\{\nu\}$,
hence
$
d_{\mathcal M_\sigma(\Omega)}\big(\nu, \mathcal B_\theta(\delta)\big)
	> 0, \text{for each $\theta\in \Theta$}.
$
Hence, under the additional assumption that both maps $\theta\mapsto {\mathcal F}(\eta,\Psi^\theta)=\inf_{n\ge 1}\frac1n \int \psi_n(\theta,\cdot)d\eta$ and $\theta\mapsto \psi_*(\theta)={\mathcal F}(\nu,\Psi^\theta)$ are
continuous
we conclude that
$$
\min_{\theta\in \Theta} d_{\mathcal M_\sigma(\Omega)}\big(\nu, \mathcal B_\theta(\delta)\big)
	> 0
$$
and, consequently,
$$
\sup_{\theta\in \Theta}  \;
			 \sup_{\{\eta\colon |{\mathcal F}(\eta,\Psi^\theta) -\psi_*(\theta)|\geqslant c\delta\}} 					
			 	\Big\{-P(\sigma,\varphi)+h_\eta(\sigma) + \int \varphi\, d\eta \Big\}<0,
$$	
which finishes the proof of the theorem
\hfill $\square$


\subsection*{Acknowledgments}
The authors are indebted to the anonymous referees for the careful rea\-ding of the manuscript and many suggestions that helped to improve
the presentation of the paper.
AOL and SRCL was partially supported by CNPq grant.
PV was partially supported by CMUP (UID/MAT/00144/2019), which is funded by FCT with national (MCTES) and European structural funds through the programs FEDER, under the partnership agreement PT2020, and
by Funda\c c\~ao para a Ci\^encia e Tecnologia (FCT) - Portugal through the grant CEECIND/03721/2017 of the Stimulus of
Scientific Employment, Individual Support 2017 Call.

\medskip

\end{document}